\definecolor{Blue}{rgb}{0.3,0.3,0.9}
\DeclareSymbolFont{cyrillic}{T2A}{cmr}{m}{n}
\DeclareMathSymbol{\Sha}{\mathalpha}{cyrillic}{216}
\theoremstyle{plain}
\newtheorem{thm}{Theorem}[subsection] 
\theoremstyle{definition}
\newtheorem{defn}[thm]{Definition}
\newtheorem{rmk}[thm]{Remark}
\newtheorem*{rmk*}{Remark}
\newtheorem{assumption}{Assumption}[section]
\theoremstyle{definition}
\theoremstyle{plain}
\newtheorem{prop}[thm]{Proposition}
\theoremstyle{plain}
\newtheorem{lem}[thm]{Lemma}
\theoremstyle{plain}
\newtheorem{cor}[thm]{Corollary}
\newtheorem*{thm-intro}{Theorem}
\newtheorem*{cor-intro}{Corollary}
\theoremstyle{remark}
\newtheorem{rem}[thm]{Remark}
\numberwithin{equation}{section}
\newcommand{\cc}{\mathbf{c}}
\newcommand{\ch}{\xi}
\newcommand{\rH}{{\mathrm{H}}}
\newcommand{\pp}{\mathfrak{p}}
\newcommand{\ppbar}{\bar{\mathfrak{p}}}
\newcommand{\fkm}{\mathfrak{m}}
\newcommand{\fkmbar}{\overline{\mathfrak{m}}}
\newcommand{\fkf}{\mathfrak{f}}
\newcommand{\fkl}{\mathfrak{l}}
\newcommand{\fkn}{\mathfrak{n}}
\newcommand{\fa}{\mathfrak{a}}
\newcommand{\relstr}{{{\rm rel},{\rm str}}}
\newcommand{\strrel}{{{\rm str},{\rm rel}}}
\newcommand{\ord}{{{\rm ord},{\rm ord}}}
\DeclareMathOperator{\GL}{GL}
\DeclareMathOperator{\pa}{\mathfrak{p}}
\DeclareMathOperator{\et}{\acute{e}t}
\DeclareMathOperator{\Ll}{\mathscr{L}}
\DeclareMathOperator{\Ss}{\mathscr{S}}
\DeclareMathOperator{\p}{\mathfrak{P}}
\DeclareMathOperator{\tpr}{\text{pr}}
\DeclareMathOperator{\La}{\Lambda}
\DeclareMathOperator{\la}{\lambda}
\DeclareMathOperator{\Ga}{\Gamma}
\DeclareMathAlphabet\mathbfcal{OMS}{cmsy}{b}{n}
\DeclareTextSymbolDefault{\uhorn}{T5}
\DeclareTextSymbolDefault{\ocircumflex}{T5}
\DeclareTextSymbolDefault{\acircumflex}{T5}
\newcommand{\bC}{\mathbf{C}}
\def\cA{{\mathcal A}}  
\def\cL{{\mathcal L}}
\def\cM{\mathcal M}
\def\cO{\mathcal O}
\def\cP{{\mathcal P}}
\newcommand{\VQdag}{{\mathbf{V}_{\underline{Q}}^\dagger}}
\newcommand{\Vsdag}{{\mathbf{V}_{Q_1}^\dagger}}
\newcommand{\Vdag}{{\mathbf{V}^\dagger}}
\newcommand{\Adag}{{\mathbf{A}^\dagger}}
\newcommand{\unb}{{\boldsymbol{f}}}
\newcommand{\Q}{\mathbf{Q}}
\newcommand{\Z}{\mathbf{Z}}
\def\makeop#1{\expandafter\def\csname#1\endcsname
	{\mathop{\rm #1}\nolimits}\ignorespaces}
\newcommand{\dBr}[1]{\llbracket{#1}\rrbracket}
\newcommand{\bT}{\mathbb{T}}
\newcommand{\cR}{\mathbb{I}}
\newcommand{\bfff}{{\boldsymbol{f}}}
\newcommand{\bff}{{\boldsymbol{f}}}
\newcommand{\bfg}{{\boldsymbol{g}}}
\newcommand{\bfh}{{\boldsymbol{h}}}
\begin{document}
	
\title{Anticyclotomic Euler system over biquadratic fields}
\author[K.\,T.\,Do]{Kim Tuan Do}
	
\subjclass[2020]{Primary 11G05; Secondary 11G40}
\date{\today} 

\address[]{Department of Mathematics, University of California Los Angeles, CA 90095, USA}
\address[]{Department of Mathematics, University of California Santa Barbara, CA 93106, USA}
\email{ktdo@ucsb.edu}



\begin{abstract}
We construct a new Euler system (anticyclotomic, in the sense of Jetchev--Nekov\'a{\v r}--Skinner) for the Galois representation $V_{f,\chi}$ attached to a newform $f$ of weight $k\geq 2$ twisted by an anticyclotomic Hecke character $\chi$ defined over an imaginary biquadratic field $K_0$. We then show some arithmetic applications of the constructed Euler system, including results on the Bloch--Kato conjecture and a divisibility towards the Iwasawa--Greenberg main conjecture for $V_{f,\chi}$.

\end{abstract}


\maketitle
\setcounter{tocdepth}{2}
\tableofcontents

\addtocontents{toc}{\setcounter{tocdepth}{2}}
\newpage
\section{Introduction}

\addtocontents{toc}{\setcounter{tocdepth}{-10}}

{
\renewcommand{\thethm}{\Alph{thm}}

Let $f=\sum_{n=1}^\infty a_nq^n\in S_{k}(\Gamma_0(N_f))$ be an elliptic newform of even weight $k=2r\geq 2$,  and let $p\nmid 6N_f$ be a prime. Let $K_0/\Q$ be an imaginary biquadratic field in which $p$ splits. This means that $K_0$ contains two distinct imaginary quadratic subfields $K_1$, $K_2$ together with one real quadratic subfield $K_3$.  Let $L$ be a number field containing $K_0$ and the Fourier coefficients of $f$, and let $\mathfrak{P}$ be a prime of $L$ above $p$ at which $f$ is ordinary, i.e. $v_{\mathfrak{P}}(a_p)=0$. Let $\chi$ be an anticyclotomic Hecke character of $K_0$ with infinity type $(-a,a,-b,b)$ where $a\ge b\ge 0$ \footnote{By either using $L(f/K_0,\chi,r)=L(f/K_0,\chi^\cc,r)$, where $\chi^\cc$ is the composition of $\chi$ with the action of complex conjugation, or swapping the order of $K_1$ and $K_2$, we would be able to cover other cases of $a$ and $b$.} that satisfies the decomposition hypothesis (\ref{eq:chi-decomp}) i.e. $\chi$ can be factored
 \begin{equation*}
\chi=\tilde{\psi}_1\tilde{\psi}_2\mathbf{N}^{(k_1+k_2-2)/2}.
    \end{equation*}
Here, for $i\in\{1,2\}$, $\psi_i$ is a Hecke character of $K_i$ of infinity type $(1-k_i,0)$ and modulus  $\fkf_i$; $\tilde{\psi}_i$ is the Hecke character of $K_0$, obtained by composing $\mathbb{A}_{K_0}^{\times}\xrightarrow{\mathbb{N}_{K_0/K_i}}\mathbb{A}_{K_i}^{\times}\xrightarrow{\psi_i}\mathbb{C}$. Not that if this happens, we must have $k_1=a-b+1$ and $k_2=a+b+1$. We then focus on the conjugate self-dual $G_{K_0}={\rm Gal}(\overline{\Q}/K_0)$-representation
\[
V_{f,\chi}:=V_f^\vee(1-r)\otimes\chi^{-1},
\] 
where $V_f^\vee$ is the contragredient of Deligne's $\mathfrak{P}$-adic Galois representation associated to $f$.

Throughout the remainder of this section, 
we assume the following hypotheses:
\begin{itemize}
    \item $f$ is ordinary and non-Eisenstein at $\mathfrak{P}$;
    \item $p$ splits completely in $K_0$;
    \item $p\nmid h_{K_0}$, where $h_{K_0}$ is the class number of $K_0$.
\end{itemize}
For every integral ideal $\mu_3$ of $\cO_{K_3}$, let $K_0[\mu_3]$ be the maximal $p$-subextension of the ring class field of $K_0$ of conductor $\mu_3$. Denote by $\mathcal{N}$ the set of squarefree products of primes $\mu_3\subset\cO_{K_3}$, where $m=N_{K_3/\Q}(\mu_3)$ is squarefree, prime to $p$, and split in $K_0$.

\begin{thm}[Theorem\,\ref{maintheorem2}]
\label{thmA} 
There exists a collection of Iwasawa cohomology classes
\[
\mathbf{z}_{f,\chi,\mu_3}\in H^1_{\rm Iw}\bigl(K_0[\mu_3 p^\infty],T_{f,\chi}\bigr),
\] 
indexed by the ideals $\mu_3\in\mathcal{N}$ with $m=N_{K_3/\Q}(\mu_3)$, where $T_{f,\chi}$ is a certain $G_K$-stable $\mathcal{O}$-lattice inside $V_{f,\chi}$,
such that for every prime $\la_3\in \mathcal{N}$ of norm $\ell$, with $(\ell,mp)=1$ we have the norm relation
\begin{equation*}
    \mathrm{Norm}_{K_0[\mu_3]}^{K_0[\mu_3\la_3]}(\mathbf{z}_{f,\chi,\mu_3\la_3})=P_{\cL_4}(\mathrm{Frob}_{\cL_4})(\mathbf{z}_{f,\chi,\mu_3}),
\end{equation*}
where $P_{\cL_4}(X)=\det(1-X\cdot \mathrm{Frob}_{\cL_4}\,|\,(T_{f,\chi})^\vee(1))$, and ${\rm Frob}_{\cL_4}$ is the geometric Frobenius.
\end{thm}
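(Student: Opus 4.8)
The plan is to produce the classes $\mathbf{z}_{f,\chi,\mu_3}$ from generalized Gross--Kudla--Schoen diagonal cycles attached to $f$ and the theta series $\theta_{\psi_1}$, $\theta_{\psi_2}$, exploiting the decomposition hypothesis to convert the anticyclotomic twist by $\chi$ over $K_0$ into CM data coming from the two imaginary quadratic subfields $K_1$ and $K_2$. Over $G_{K_0}$ each $V_{\theta_{\psi_i}}=\mathrm{Ind}_{K_i}^{\Q}V_{\psi_i}$ splits into two characters, and the identity $\chi=\tilde\psi_1\tilde\psi_2\mathbf{N}^{(k_1+k_2-2)/2}$ produces, in a suitable normalization, an embedding of $V_{f,\chi}$ into a Tate twist of $V_f^\vee\otimes V_{\theta_{\psi_1}}\otimes V_{\theta_{\psi_2}}$; this is the cohomological shadow of the factorization of $L(f/K_0,\chi,s)$ as a shift of the triple-product $L$-function $L(f\otimes\theta_{\psi_1}\otimes\theta_{\psi_2},s)$. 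For each $\mu_3\in\cN$ one realizes $\theta_{\psi_1}$ and $\theta_{\psi_2}$ by CM cycles on the relevant Kuga--Sato varieties over modular curves, whose fields of definition are ring class fields of $K_1$ and $K_2$ of conductor dividing a prime of $K_i$ above $m=N_{K_3/\Q}(\mu_3)$; intersecting the diagonal cycle against these and pushing forward to the $f$-factor gives a cycle class over their compositum, which --- because the primes in $\cN$ split completely in $K_0$ --- is precisely $K_0[\mu_3]$. Applying the $p$-adic \'etale Abel--Jacobi map together with the Hecke idempotent cutting out the $(f,\theta_{\psi_1},\theta_{\psi_2})$-component yields $\mathbf{z}_{f,\chi,\mu_3}\in H^1(K_0[\mu_3],T_{f,\chi})$; the hypotheses that $f$ is non-Eisenstein at $\mathfrak{P}$ and $p\nmid 6N_f h_{K_0}$ guarantee that this idempotent and the $G_{K_0}$-stable lattice $T_{f,\chi}$ are defined integrally. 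To land in $H^1_{\mathrm{Iw}}\bigl(K_0[\mu_3 p^\infty],T_{f,\chi}\bigr)=\varprojlim_n H^1(K_0[\mu_3 p^n],T_{f,\chi})$ I would arrange these cycles to be norm-coherent down the anticyclotomic $p$-tower: since $p$ splits in $K_0$ and $f$ is ordinary at $\mathfrak{P}$, applying the ordinary projector to the degeneracy maps at the primes of $K_0$ above $p$ produces such a family, in the spirit of the $\Lambda$-adic Heegner and diagonal classes of Howard and of the Bertolini--Seveso--Venerucci circle of ideas, and one then passes to the inverse limit.

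The core of the argument is the horizontal norm relation. Fix $\mu_3\in\cN$ and a prime $\la_3\in\cN$ of norm $\ell$ coprime to $mp$; since $\la_3$ splits in $K_0/K_3$ and $m$ is split in $K_0$, the rational prime $\ell$ splits completely in $K_0$, hence in each of $K_1$, $K_2$, $K_3$. I would evaluate $\mathrm{Norm}_{K_0[\mu_3]}^{K_0[\mu_3\la_3]}(\mathbf{z}_{f,\chi,\mu_3\la_3})$ in three steps. First, by the Shimura reciprocity law, express the trace of the conductor-$\mu_3\la_3$ CM cycle down to $K_0[\mu_3]$ as a composition of the two degeneracy maps at $\ell$ applied to the conductor-$\mu_3$ cycle on the $\theta_{\psi_1}$- and $\theta_{\psi_2}$-factors, together with an ``$\ell$-old'' correction term. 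Second, invoke the Eichler--Shimura relation at $\ell$ to identify that composition with the Hecke operator $T_\ell$ on each of those two factors. Third, project to the eigencomponent, where $T_\ell$ acts on the $f$-factor by $a_\ell(f)$ and on the $\theta_{\psi_i}$-factor by $\psi_i(\mathfrak{l}_i)+\psi_i(\bar{\mathfrak{l}}_i)$, with $\mathfrak{l}_i,\bar{\mathfrak{l}}_i$ the two primes of $K_i$ above $\ell$. Reassembling, and carefully tracking the twist by $\mathbf{N}^{(k_1+k_2-2)/2}$ and the central twist by $(1-r)$, the operator obtained on the $(f,\chi)$-component is exactly $\det\bigl(1-\mathrm{Frob}_{\cL_4}X\mid(T_{f,\chi})^\vee(1)\bigr)$ evaluated at the geometric Frobenius $\mathrm{Frob}_{\cL_4}$ at a prime $\cL_4$ of $K_0$ above $\la_3$, because $V_{f,\chi}$ is two-dimensional and its Frobenius reciprocal roots at $\cL_4$ are built from the Satake parameters of $f$ and the values of $\chi$ at $\cL_4$ and its conjugate.

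I expect the main obstacle to be the first two steps of this norm computation. For a single Heegner point this is Kolyvagin's classical degeneracy calculation, but on a triple product of Kuga--Sato varieties the trace of a CM cycle acquires genuine ``$\ell$-old'' contributions from the two non-$f$ factors, and controlling them cleanly requires either working with a suitably $\ell$-stabilized cycle or introducing an auxiliary tame level and descending afterwards --- a subtlety already present in the work of Bertolini--Seveso--Venerucci and its successors on balanced diagonal classes. Layered on top of this is the bookkeeping peculiar to the biquadratic setting: the field of definition of the CM cycles is governed simultaneously by the $K_1$- and $K_2$-conductors, so one must verify that their compositum is the ring class field $K_0[\mu_3]$ over $K_3$ and that the decomposition of $\mathrm{Frob}_\ell$ in $\Gal(K_0/\Q)$ matches the factorization of the local Euler factor at $\cL_4$; keeping all of these compatibilities straight, together with the integrality of the Hecke idempotents (which is where $p\nmid 6N_f h_{K_0}$ and the non-Eisenstein hypothesis enter), is the technical heart of the construction. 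Once the finite-level norm relation and the compatibility down the $p$-tower are in place, taking the inverse limit yields the asserted relation in $H^1_{\mathrm{Iw}}$.
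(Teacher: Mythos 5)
Your proposal shares the paper's high-level scaffolding (diagonal cycles on triple products, exploiting the CM structure of $\theta_{\psi_1},\theta_{\psi_2}$, reading the Euler factor off Frobenius at $\cL_4$), but the mechanism by which the CM characters enter is genuinely different from what the paper does, and it leaves several load-bearing steps unaddressed.

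You propose to realize the characters $\psi_i$ by \emph{CM cycles on Kuga--Sato varieties} over the $\theta_{\psi_i}$-factors and then ``intersect'' the diagonal cycle against these. The paper instead never touches CM cycles: it works with the diagonal class at level $Y_1(Nm)$ from Bertolini--Seveso--Venerucci and pushes it through the Lei--Loeffler--Zerbes \emph{patching isomorphism} (Theorem~\ref{norm1}), which identifies $\cO[H_\fkn^{(p)}]\otimes_{\mathbb{T}'(N),\phi_\fkn}H^1_{\et}(Y_1(N)_{\overline{\Q}},\Z_p(1))$ with ${\rm Ind}^{\Q}_{K(\fkn)}\cO(\psi^{-1})$ purely via the Hecke algebra homomorphism $\phi_\fkn$. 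The tame norm relation then comes not from Shimura reciprocity and Eichler--Shimura applied to a CM cycle, but from the commutativity of the LLZ norm maps $\mathcal{N}_\fkn^\fkm$ with the degeneracy maps on the diagonal class. These are different proofs: the LLZ route gives precise integral $G_\Q$-equivariant identifications, whereas your cycle-theoretic intersection is not a well-defined operation on cohomology classes as stated (a class in $H^1(\Q,H^1\otimes H^1\otimes H^1)$ is not an algebraic cycle one intersects against CM points on one factor).

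Beyond the difference in route, there are three concrete gaps. First, you assert without argument that the compositum of the relevant ring class fields of $K_1$ and $K_2$ equals $K_0[\mu_3]$; this is exactly the content of Proposition~\ref{ringclass} in the paper, which requires the $p\nmid 6h_{K_0}$ hypothesis and the explicit diagonal/quotient sequence $(\cO_{K_3}/\mu_3)^{\times,(p)}\to{}^1H_{\mu_1}^{(p)}\times{}^2H_{\mu_2}^{(p)}\twoheadrightarrow H[\mu_3]^{(p)}$, and is where the Frobenius identification $[\la_1]\times[\bar\la_2]\mapsto{\rm Frob}_{\cL_4/\la_3}$ lives. Second, you do not address the parasitic $(\ell-1)$ factor that appears in the raw diagonal-cycle norm relation (Proposition~\ref{wrongnorm}); stripping it out by quotienting the diamond operator action, and then converting the resulting expression into the genuine Euler factor via the congruence trick of Rubin (Lemma 9.6.1, 9.6.3), are essential to get the stated $P_{\cL_4}({\rm Frob}_{\cL_4})$, not a byproduct that ``reassembles.'' Third, your Iwasawa-tower compatibility via ``applying the ordinary projector to degeneracy maps at primes above $p$'' is vague; the paper obtains the wild norm relation by varying the entire construction in a three-variable Hida family $(\bff,\boldsymbol{\theta}_{\ch_1},\boldsymbol{\theta}_{\ch_2})$, specializing to recover the $p$-power conductor tower, rather than by manipulating degeneracy maps on a fixed-weight class.
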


\begin{rmk*}
In \cite{JNS}, Jetchev--Nekov{\'a}{\v{r}}--Skinner have developed a theory of `split' anticyclotomic Euler systems attached to conjugate self-dual representations over CM fields, where classes are defined over ring class extensions of CM fields (indexed by ideals of their totally real subfields). Our construction fits within their framework. Furthermore, we note that the condition where $m=N_{K_3/\Q}(\mu_3)$ splits in $K_0$ does exclude the setting when $m$ is inert in $K_3$ and $\mu_3$ splits in $K_0$. Nevertheless, this does not affect the application of the \cite{JNS} machinery (see some details for the imaginary quadratic case in \cite[\S{4.3}]{Do-PhD}).
\end{rmk*}

Due to its geometric origin, if we let
\[
\kappa_{f,\chi}:={\rm Norm}^{K_0[1]}_{K_0}(\mathbf{z}_{f,\chi,(1)})
\]
then it will land in a Selmer subgroup of $H^1(K_0,V_{f,\chi})$ with `nice' local conditions (see Section \ref{subsec:local-condition}). Then feeding Theorem~\ref{thmA} to the general Euler system machinery of \cite{JNS}, we deduce the following cases of the Bloch--Kato conjecture in analytic rank $0$.

\begin{thm}[Theorem\,\ref{thm:BK-def}] 
\label{thmB}
Let $f\in S_k(\Gamma_0(N_f))$ be a newform. Let $\chi$ be an anticyclotomic Hecke character of $K_0$ of infinity type $(-a,a,-b,b)$ satisfying the Hypotheses (\ref{eq:chi-decomp}). Assume further that:
\begin{enumerate}
    \item Either $k\ge 2a+2$ or $2b\ge k$; 
    \item $N_f\cO_{K_3}=\mathfrak{n}^+\mathfrak{n}^-$ where $\fkn^+$
(respectively $\fkn^-$) is divisible only by primes which are split (respectively inert) in $K_0/K_3$ and $\fkn^-$ is a squarefree product of an even number of primes;
    \item $\bar{\rho}_f$ is absolutely irreducible; 
    \item $(pN_f,\mathrm{Norm}_{K_1/\Q}(\mathfrak{f}_1)\mathrm{Norm}_{K_2/\Q}(\mathfrak{f}_2)D_{K_0})=1$;
\end{enumerate}
Then
\[
L(f/K_0,\chi,r)\neq 0\quad\Longrightarrow\quad{\rm Sel}_{\rm BK}(K_0,V_{f,\chi})=0,
\] 
and hence the Bloch--Kato conjecture for $V_{f,\chi}$ holds in this case.
\end{thm}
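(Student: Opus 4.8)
The plan is to feed the anticyclotomic Euler system of Theorem~\ref{thmA} into the general machinery of \cite{JNS}; the only input that is not essentially formal is the non-triviality of the bottom class $\kappa_{f,\chi}$, which we extract from an explicit reciprocity law together with the hypothesis $L(f/K_0,\chi,r)\neq 0$.

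Step one is to verify that the collection $\{\mathbf{z}_{f,\chi,\mu_3}\}_{\mu_3}$ is a \emph{split anticyclotomic Euler system} in the sense of \cite{JNS} for the conjugate self-dual pair $(T_{f,\chi},\mathcal{N})$. The norm relations are precisely Theorem~\ref{thmA}; the conjugate self-duality $T_{f,\chi}^{c}\cong T_{f,\chi}^{\vee}(1)$ is built into $V_{f,\chi}=V_f^\vee(1-r)\otimes\chi^{-1}$ together with the anticyclotomicity of $\chi$; and the ``large image'' and non-degeneracy axioms follow from $\bar{\rho}_f$ being absolutely irreducible (Hypothesis~(3)) together with $p\nmid 6N_f h_{K_0}$ and $p$ split in $K_0$, which exclude the exceptional configurations. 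Ordinarity at $\mathfrak{P}$ and the splitting of $p$ provide the Greenberg-type subrepresentation $F^+V_{f,\chi}\subset V_{f,\chi}$ at the two primes above $p$, and hence the local condition at $p$ entering the Selmer structure; away from $p$, Hypothesis~(4), i.e.\ $(pN_f,\mathrm{Norm}_{K_1/\Q}(\mathfrak{f}_1)\mathrm{Norm}_{K_2/\Q}(\mathfrak{f}_2)D_{K_0})=1$, guarantees that the classes are unramified at the ramified primes of $\chi$ and that no spurious local conditions appear, so that the Selmer structure cut out by the system coincides with ${\rm Sel}_{\rm BK}(K_0,-)$ away from $p$ (using absolute irreducibility to kill the stray $H^0$ and $H^2$ contributions).

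Granting the axioms, the \cite{JNS} machinery yields the implication: if the localization $\mathrm{loc}_{p}(\kappa_{f,\chi})$ of the bottom class at (a chosen prime above) $p$ is non-zero, then ${\rm Sel}_{\rm BK}(K_0,V_{f,\chi})=0$. It therefore remains to prove $\mathrm{loc}_{p}(\kappa_{f,\chi})\neq 0$ under $L(f/K_0,\chi,r)\neq 0$. Here I would use the geometric origin of the classes $\mathbf{z}_{f,\chi,\mu_3}$ — they arise, after push-forward/corestriction, from Heegner-type cycles attached to the CM type of $K_0$ on the Shimura variety over $K_3$ determined by the factorization $N_f\cO_{K_3}=\mathfrak{n}^+\mathfrak{n}^-$ of Hypothesis~(2) — to write down an explicit reciprocity law: the image of $\mathrm{loc}_{p}(\kappa_{f,\chi})$ under an appropriate Bloch--Kato logarithm / dual-exponential map equals, up to an explicit non-zero algebraic factor and a $p$-adic period, the value at $(r,\chi)$ of an anticyclotomic $p$-adic $L$-function attached to $V_{f,\chi}$. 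The decomposition hypothesis~(\ref{eq:chi-decomp}), $\chi=\tilde{\psi}_1\tilde{\psi}_2\mathbf{N}^{(k_1+k_2-2)/2}$, is what allows this $p$-adic $L$-function to be factored through the CM characters $\psi_1,\psi_2$ of $K_1,K_2$ and its special value to be identified, via a Rankin--Selberg/Waldspurger-type formula, with a non-zero multiple of the complex value $L(f/K_0,\chi,r)$. Hypothesis~(1), $k\ge 2a+2$ or $2b\ge k$, is used precisely to place the pair $(r,\chi)$ in the range of classical interpolation of this $p$-adic $L$-function, so that $L(f/K_0,\chi,r)\neq 0$ forces the $p$-adic $L$-value, and hence $\mathrm{loc}_{p}(\kappa_{f,\chi})$, to be non-zero.

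The main obstacle is this last step: establishing the explicit reciprocity law in the biquadratic setting, i.e.\ computing the local image at $p$ of the geometric classes, matching it with an anticyclotomic $p$-adic $L$-function for $V_{f,\chi}$, and carrying out the factorization through $K_1$ and $K_2$ afforded by~(\ref{eq:chi-decomp}) carefully enough to see that the interpolation factor and the auxiliary local terms are units (or at least non-zero) under Hypotheses~(1) and~(4). Once this is in place, the chain
\[
L(f/K_0,\chi,r)\neq 0\ \Longrightarrow\ \mathrm{loc}_{p}(\kappa_{f,\chi})\neq 0\ \Longrightarrow\ {\rm Sel}_{\rm BK}(K_0,V_{f,\chi})=0
\]
proves the theorem, the vanishing of the Bloch--Kato Selmer group being the Bloch--Kato conjecture in analytic rank $0$ for $V_{f,\chi}$.
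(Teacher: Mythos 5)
Your high-level strategy --- feed the Euler system of Theorem~A into the JNS machinery and then use an explicit reciprocity law to turn $L(f/K_0,\chi,r)\neq 0$ into non-triviality of the localization of the base class at $p$ --- is the paper's strategy, but the mechanism you describe in the key step is not the one in the paper, and your description of the geometric input is incorrect. You write that the classes $\mathbf{z}_{f,\chi,\mu_3}$ arise ``from Heegner-type cycles attached to the CM type of $K_0$ on the Shimura variety over $K_3$ determined by the factorization $N_f\cO_{K_3}=\mathfrak{n}^+\mathfrak{n}^-$.'' That is not the construction: there is no Shimura variety over the real quadratic $K_3$ here, and the author explicitly defers a bipartite/Heegner construction over $K_3$ to future work. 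The classes come from Gross--Kudla--Schoen \emph{diagonal cycles} on $Y_1(N)^3$ pushed through the patched CM Hecke module isomorphisms of Lei--Loeffler--Zerbes. Hypothesis~(2) on $\mathfrak{n}^+\mathfrak{n}^-$ enters only through the $\epsilon$-factor computation of Remark~\ref{rootnumber}: it guarantees the local root numbers away from $\infty$ are $+1$, which is exactly the sign hypothesis~(\ref{eq:+1}) required for Hsieh's \emph{triple-product} $p$-adic $L$-function $\mathscr{L}_p^{\bff,\eta_\bff}(\bff,\bfg,\bfh)$ to exist. The reciprocity law is accordingly the triple-product one of \cite{BSV} (Theorem~\ref{thm:ERL}), not a Rankin--Selberg/Waldspurger anticyclotomic one; the ``anticyclotomic $p$-adic $L$-function attached to $V_{f,\chi}$'' you invoke is neither constructed nor needed.

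There is also a genuine gap in your last inference. The Euler system lives in the balanced (relaxed--strict--ord--ord) Selmer group, and the JNS machinery (Theorem~\ref{thm:rank-1-general}) gives that $\mathrm{Sel}_{\relstr,\ord}(K_0,V_{f,\chi})$ is one-dimensional, spanned by the base class; it does \emph{not} directly output vanishing of $\mathrm{Sel}_{\rm BK}$. One must then use Poitou--Tate duality together with the non-vanishing of the restriction at $\cP_1$ (resp.\ $\cP_2$) to deduce vanishing of the smaller Greenberg Selmer group $\mathrm{Sel}_{\ord,\ord}$ (resp.\ $\mathrm{Sel}_{\relstr,\relstr}$), and finally identify this with $\mathrm{Sel}_{\rm BK}$ via Lemma~\ref{lem:BK-Gr}. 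That lemma --- a Hodge--Tate weight computation implementing the Panchishkin condition --- is where Hypothesis~(1) is actually used; it is not merely an interpolation-range condition as you assert, and without this identification the chain $\mathrm{loc}_p(\kappa_{f,\chi})\neq 0 \Rightarrow \mathrm{Sel}_{\rm BK}=0$ is not justified.
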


Note that the first $2$ conditions of Theorem \ref{thmB} imply that the sign of the functional equation of $V_{f,\chi}$ is equal to $+1$, see also Remark \ref{rootnumber}. This puts us in an ideal situation for the non-vanishing of central $L-$values generically. 

Let $\cO$ be the ring of integers of $L_\mathfrak{P}$. We say that $f$ has {big image} if for a certain Galois stable $\cO$-lattice $T_f^\vee\subset V_f^\vee$, the image of $G_\Q$ in ${\rm Aut}_\cO(T_f^\vee)$ contains a conjugate of ${\rm SL}_2(\Z_p)$. Under this assumption, we also have results towards the Bloch--Kato conjecture in the analytic rank $1$ case.
\begin{thm}[Theorem\,\ref{thm:BK-def-1}] 
\label{thmC}

Let the hypotheses be as in Theorem~\ref{thmB},  and assume in addition that: 
\begin{enumerate}
    \item $\bar{\rho}_f$ is $p$-distinguished;
    \item $f$ has big image;
    \item $p>k-2$.
\end{enumerate}
If $2a\geq k\ge 2b+2$ (which implies $L(f/K,\chi,r)=0$), then 
\[
{\rm dim}_{L_\mathfrak{P}}\,{\rm Sel}_{\rm BK}(K_0,V_{f,\chi})\geq 1.
\]

\end{thm}

Finally, we note that these results also include the proof of a divisibility towards the anticyclotomic Iwasawa Main Conjecture for $V_{f,\chi}$, see Theorem\,\ref{thm:IMC-def}.

\subsection{Relation to previous works}\label{subsec:previous}

When $\chi$ is an anticyclotomic Hecke character over $K$, an imaginary quadratic field, the arithmetic of $V_{f,\chi}$ has been studied intensively via the Euler system of Heegner points pioneered by Gross--Zagier and Kolyvagin \cite{GZ,kol88} (see also \cite{zhang130,Tian-PhD,nekovar-CM}), and generalized Heegener cycles by Bertolini--Darmon--Prasanna \cite{bdp1}. In particular, these objects have direct implications towards the Bloch-Kato conjecture in analytic rank $0$ for $V_{f,\chi}$  by either varying the generalised Heegner cycles in $p$-adic families like in Castella--Hsieh \cite{cas-hsieh1} (see also \cite{cas-2var}), or by the `level-raising' method like in Bertolini--Darmon \cite{bdIMC} (see also \cite{LV-JNT,ChHs2,Chi}). 
In the same vein as \cite{bdIMC}, Nekov{\'a}{\v{r}} \cite{Neko_2012} and Wang \cite{Wang-anticycltomicHilbert} proved results towards the rank $0$ Bloch-Kato conjecture when $f$ is a cuspidal Hilbert modular eigenform over a totally real field $F$ of parallel weight $2$ and higher weights respectively, where $\chi$ is a finite order character, see also result of Tamiozzo \cite{Tamiozzo2021}. 

Outside of the Heegner realm, it is worthwhile to mention that the Euler system of Beilinson--Flach classes  constructed by Lei--Loeffler--Zerbes \cite{LLZ,LLZ-K} and Kings--Loeffler--Zerbes \cite{KLZ,KLZ-AJM} can be applied to obtain similar rank $0$ results. Relying on this, Lamplugh \cite{JackLamplugh} constructed Euler systems for ${\rm Ind}_{K_0}^{K_1}\mathcal{O}(\chi\rho)$ over $K_1$ (where $\rho$ is an auxiliary character) and used that to bound the associated Selmer group over the $K_0$ via Rubin's machinery \cite{Rubin-ES}.

The anticyclotomic Euler system over $K_0$ that we will describe in this paper is more comparable with the anticyclotomic diagonal Euler system \cite{Do-PhD,CD} over $K$ (an imaginary quadratic field) and comes together with application towards the Bloch-Kato conjecture in analytic rank $0$. The construction of the cohomology classes, similar to \cite{Do-PhD,CD}, is based on a generalisation of the diagonal cycles pioneered by Gross--Kudla \cite{gross-kudla} and Gross--Schoen \cite{gross-schoen}, and improved recently by Darmon--Rotger and Bertolini--Seveso--Venerucci (see \cite{BDRSV}). Despite the fact that it is being done later, the imaginary biquadratic case is actually a generic case (where $K_1\neq K_2$) while the imaginary quadratic case is a degenerate situation (where $K=K_1=K_2$).

In future work, we intend to construct a bipartite Euler system over a biquadratic field as well as investigate the case where $p$ does not split completely in $K_0$.

\subsection{Acknowledgements} 
The author would like to thank Francesc Castella and Christopher Skinner for many fruitful discussions and encouragements, Haruzo Hida for suggesting the problem, Chandrashekhar Khare, Romyar Sharifi and Alex Smith for helpful communication, and the anonymous referee for many helpful comments and suggestions on an earlier
draft of this paper.

\addtocontents{toc}{\setcounter{tocdepth}{2}}

}

\section{Preliminaries}

\subsection{Galois representations associated to newforms} In this section, we follow \cite[Sec.\,1.1]{CD} and introduce some important notation and results. Let $f\in S_k(\Gamma_1(N_f),\chi_f)$ be a normalized newform of weight $k\ge 2$ and let $\sum_{n=1}^\infty a_n q^n$ be its $q$-expansion. Let $p\nmid N_f$ be a prime. Fix embeddings $i_\infty: \overline{\Q}\hookrightarrow \bC$ and $i_p: \overline{\Q}\hookrightarrow \overline{\Q}_p$. Let $L/\Q$ be the coefficient field of $f$ that is, $L$ contains all values $i_\infty^{-1}(a_n)$ and $i_\infty^{-1}\circ \chi_f$. Let $\mathfrak{P}$ be the prime of $L$ above $p$ with respect to $i_p$. Let $S=\{\text{prime }\ell| pN_f\}\cup \{\infty\}$. Then Eichler-Shimura (for $k=2$) and Deligne (for $k>2$) construct a $p-$adic Galois representation associated to $f$:
\begin{equation*}
    \rho_{f,\mathfrak{P}}:G_{\Q,S}\rightarrow \GL_2(L_{\mathfrak{P}}),
\end{equation*}
such that for all primes $\ell\notin S$:
\begin{itemize}
    \item trace$(\rho_{f,\mathfrak{P}}(\mathrm{Frob}_\ell))=i_p(a_\ell)$,
    \item det$(\rho_{f,\mathfrak{P}}(\mathrm{Frob}_\ell))=i_p(\chi_f(\ell)\ell^{k-1})$,
    \item $\rho_{f,\mathfrak{P}}$ is irreducible, hence absolutely irreducible.
\end{itemize}
 Here $\mathrm{Frob}_\ell$ is the geometric Frobenius.

 As in \cite[Sec.\,1.1]{CD}, one obtains the geometric realization $V_f$ of $\rho_{f,\mathfrak{P}}$ defined as the subspace of
\[
H^1_{\et}(Y_1(N_f)_{\overline{\Q}},\Ss_{k-2})\otimes L_{\mathfrak{P}}.
\]
Dually, $V_f^{\vee}={\rm Hom}(V_f,L_{\mathfrak{P}})$ 
 can be interpreted as the maximal quotient of 
\[
H^1_{\et}(Y_1(N_f)_{\overline{\Q}},\Ll_{k-2}(1))\otimes L_{\mathfrak{P}}
\]
on which the dual Hecke operator $T_\ell'$ acts as multiplication by $a_\ell$ for all $\ell\nmid N_fp$ and $\langle d\rangle = \langle d\rangle^{*}$ acts as multiplication by $\chi_f(d)$ for all $d\in (\Z/N_f\Z)^\times$. 

Let $\cO$ be the ring of integers of $L_\mathfrak{P}$. There exists a $G_{\Q}$-stable $\cO$-lattice $T_f^\vee\subset V_f^\vee$ defined as the image of 
$H^1_{\et}(Y_1(N_f)_{\overline{\Q}},\Ll_{k-2}(1))\otimes\cO$ in $V_f^\vee$.

If $f$ is ordinary at $p$ (which means $i_p(a_p)\in\cO^\times$), then the restriction of $V_f$ to $G_{\Q_p}$ is reducible. This leads us to an exact sequence of $L_{\mathfrak{P}}[G_{\Q_p}]$-modules
\begin{equation*}
0\rightarrow V_f^{+}\rightarrow V_f\rightarrow V_f^{-}\rightarrow 0, 
\end{equation*}
where ${\rm dim}_{L_\mathfrak{P}}V_f^{\pm}=1$. Dually, we also obtain an exact sequence for the restriction of $V_f^{\vee}$ to $G_{\Q_p}$
\begin{equation}\label{subsec:p-ord}
0\rightarrow V_f^{\vee,+}\rightarrow V_f^{\vee}\rightarrow V_f^{\vee,-}\rightarrow 0, 
\end{equation}
where $V_f^{\vee,+}\simeq(V_f^-)^\vee(1-k)(\chi_f^{-1})$, and the $G_{\Q_p}$-action on the quotient $V_{f}^{\vee,-}$ is given by the unramified character sending the arithmetic Frobenius ${\rm Frob}_p^{-1}$ to $\alpha_p$, which is the unit root of $x^2-a_px+\chi_f(p)p^{k-1}$.

\subsection{Patched CM Hecke modules}\label{LLZ-K}

Here, we recall the conventions on Hecke characters and the construction of certain patched CM Hecke modules from \cite[Sec.\,1.3]{CD} and \cite{LLZ-K}.

\subsubsection{Hecke characters and theta series }\label{theta}

Let $K$ be an imaginary quadratic field. Let $p=\pp\ppbar$ be a prime that splits in $K$ with $\pp$, the prime of $K$ above $p$, induced by $i_p:\overline{\Q}\hookrightarrow\overline{\Q}_p$. We say that a Hecke character $\psi:\mathbb{A}_K^{\times}/K^{\times}\rightarrow \mathbb{C}^{\times}$ has infinity type $(m,n)$, where $m,n$ are integers, if $\psi_{\infty}(x_{\infty})=x_{\infty}^m\bar{x}_{\infty}^n$. 

Let $\text{rec}_K:\mathbb{A}_K^{\times}\rightarrow G_K^{\text{ab}}$ be the geometrically normalized Artin reciprocity map. Following \cite[Sec.\,1.3.1]{CD}, given $g\in G_K$, we take $x\in\mathbb{A}_K^{\times}$ such that $\text{rec}_K(x)=g\vert_{K^{\rm ab}}$ and define
\begin{equation}
        \psi_{\p}(g)=i_p\circ i_{\infty}^{-1}(\psi(x)x_{\infty}^{-m}\bar{x}_{\infty}^{-n})x_{\pa}^m x_{\bar{\pa}}^n.\nonumber
\end{equation}
Such a $\psi_{\p}$ will be called the $p$-adic avatar of $\psi$. We shall also use $\psi$ to denote its $p$-adic avatar if the context makes this usage reasonable.

Attached to $\psi$, a Hecke character of $K$ of infinity type $(-1,0)$ with conductor $\fkf$ that takes values in a finite extension $L/K$, is the theta series 
\begin{equation*}
\theta_{\psi}=\sum_{(\mathfrak{a},\fkf)=1}\psi(\mathfrak{a})q^{N_{K/\Q}(\mathfrak{a})} \in S_2(\Gamma_1(N_\psi),\chi_\psi\epsilon_K)
\end{equation*}
where $N_\psi=N_{K/\Q}(\fkf){\rm disc}(K/\Q)$, $\chi_\psi$ is the unique Dirichlet character modulo $N_{K/\Q}(\fkf)$ such that 
$\psi((n))=n\chi_\psi(n)$ for all $n\in\Z$ with $(n,N_{K/\Q}(\fkf))=1$, and $\epsilon_K$ is the quadratic Dirichlet character attached to $K$. 
The cuspform $\theta_\psi$ is new of level $N_\psi=N_{K/\Q}(\mathfrak{f})\cdot \text{disc}(K/\Q)$ by \cite{Mi}. 
One obtains the following description of the $\mathfrak{P}$-adic representation of $\theta_\psi$
\[
\quad V_{\theta_{\psi}}^{\vee}\cong {\rm Ind}^{\Q}_KL_{\mathfrak{P}}(\psi^{-1}),
\]
where $\mathfrak{P}$ is the prime of $L$ above $p$ with respect to $i_p$.
\subsubsection{Hecke algebras and norm maps}\label{LLZrecall}

We keep the notation of the previous section and follow \cite[Sec.\,1.3.1]{CD}. Let $\fkn\subset\cO_K$ be an ideal divisible by $\fkf$ and let $N=N_{K/\Q}(\fkn){\rm disc}(K/\Q)$. Let $K_{\fkn}$ be the ray class field of $K$ with conductor $\fkn$. Let $H_\fkn={\rm Gal}(K_{\fkn}/K)$ be the ray class group of $K$ modulo $\fkn$. Let $K(\fkn)$ be the largest $p$-subextension of $K$ contained in $K_{\fkn}$, i.e. ${\rm Gal}(K(\fkn)/K)\cong H_{\fkn}^{(p)}$ is the largest $p$-power quotient of $H_{\fkn}$. Given an ideal $\mathfrak{k}$ of $K$ that is coprime to $\fkn$, let $[\mathfrak{k}]$ be the class of $\mathfrak{k}$ in $H_{\fkn}$. Let $\mathbb{T}'(N)$ be the subalgebra of ${\rm End}_{\Z}(H^1(Y_1(N)(\bC),\Z))$ generated by $\langle d\rangle'$ and $T_\ell'$ for all primes $\ell$, then one can prove that:

\begin{prop}[Proposition 3.2.1 in \cite{LLZ-K}]\label{phi_n}
 There exists a homomorphism $\phi_{\fkn}: \mathbb{T}'(N)\rightarrow \mathcal{O}[H_{\fkn}]$ 
defined by 
    \begin{align*}
        \phi_{\fkn}(T_\ell')&=\sum_{\substack{\mathfrak{l}\subset\cO_K,\fkl\nmid\mathfrak{n},\\N_{K/\Q}(\fkl)=\ell}}[\fkl]\psi(\fkl),\\
        \phi_{\fkn}(\langle d\rangle ')&=\chi_\psi(d)\epsilon_{K}(d)[(d)].
    \end{align*}
\end{prop}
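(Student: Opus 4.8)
The plan is to construct $\phi_{\fkn}$ directly as a ring homomorphism and then check it respects the defining relations of $\mathbb{T}'(N)$ on the two distinguished families of generators $T_\ell'$ and $\langle d\rangle'$. First I would recall that $\mathbb{T}'(N)$ acts faithfully on $H^1(Y_1(N)(\bC),\Z)$, so that $\mathbb{T}'(N)$ is determined by its action on this module together with the abstract Hecke relations; it therefore suffices to define $\phi_{\fkn}$ on generators and verify that the proposed images satisfy all relations among the $T_\ell'$ and $\langle d\rangle'$. Since the target $\cO[H_{\fkn}]$ is commutative and the source is commutative as well, the only relations to check are: (i) the Euler-factor recursions relating $T_{\ell^{j+1}}'$, $T_{\ell^j}'$, $T_{\ell^{j-1}}'$ and $\langle \ell\rangle'$ for primes $\ell\nmid N$; (ii) multiplicativity $T_m'T_n'=T_{mn}'$ for $(m,n)=1$; and (iii) that $d\mapsto \langle d\rangle'$ is a character of $(\Z/N\Z)^\times$ compatible with the formula for $\phi_{\fkn}(\langle d\rangle')$.

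The key input is that the $\mathfrak{P}$-adic (equivalently, the Betti) realization of $\theta_\psi$ is $V_{\theta_\psi}^\vee\cong\mathrm{Ind}_K^\Q L_{\mathfrak{P}}(\psi^{-1})$, as recalled at the end of Section~\ref{theta}. I would exploit this as follows. The eigensystem of $\theta_\psi$ sends $T_\ell'$ to $a_\ell(\theta_\psi)=\sum_{N_{K/\Q}(\fkl)=\ell}\psi(\fkl)$ and $\langle d\rangle'$ to $\chi_\psi(d)\epsilon_K(d)$; the claimed formula for $\phi_{\fkn}$ is exactly this eigensystem "decorated" by the group-like elements $[\fkl]\in H_{\fkn}$ and $[(d)]\in H_{\fkn}$. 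Concretely, one checks that the assignment on prime powers obtained by the Euler product
\[
\sum_{j\ge 0}\phi_{\fkn}(T_{\ell^j}')X^j=\prod_{\substack{\fkl\mid \ell\\ \fkl\nmid\fkn}}\bigl(1-[\fkl]\psi(\fkl)X^{f_{\fkl}}\bigr)^{-1}\cdot(\text{Euler factor at }\ell\mid\fkn)
\]
matches the abstract Hecke recursion: for $\ell$ split as $\fkl\fklbar$ with $\fkl,\fklbar\nmid\fkn$ this is $(1-[\fkl]\psi(\fkl)X)^{-1}(1-[\fklbar]\psi(\fklbar)X)^{-1}$, whose denominator is $1-([\fkl]\psi(\fkl)+[\fklbar]\psi(\fklbar))X+[\fkl\fklbar]\psi(\fkl\fklbar)X^2 = 1-\phi_{\fkn}(T_\ell')X+\chi_\psi(\ell)\epsilon_K(\ell)[(\ell)]\ell\,X^2$, using $\fkl\fklbar=(\ell)$ and $\psi((\ell))=\ell\chi_\psi(\ell)$; the comparison with $\phi_{\fkn}(\langle\ell\rangle')\ell = \chi_\psi(\ell)\epsilon_K(\ell)[(\ell)]\ell$ is then immediate. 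The inert and ramified primes are handled the same way (in the inert case $[\fkl]=[(\ell)]$, $\psi(\fkl)=\psi((\ell))$, and the Euler factor is $(1-[(\ell)]\psi((\ell))X^2)^{-1}$, so $\phi_{\fkn}(T_\ell')=0$, again matching $\epsilon_K(\ell)=-1$). Multiplicativity on coprime indices follows by multiplying Euler products, and the character property of $d\mapsto\chi_\psi(d)\epsilon_K(d)[(d)]$ reduces to $[(dd')]=[(d)][(d')]$ in $H_{\fkn}$ together with $\chi_\psi$, $\epsilon_K$ being characters.

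The main obstacle, and the step I would be most careful about, is bookkeeping at the \emph{bad} primes $\ell\mid N$ (in particular $\ell\mid\fkn$ or $\ell\mid\mathrm{disc}(K/\Q)$): one must confirm that the naive formula $\phi_{\fkn}(T_\ell')=\sum_{\fkl\nmid\fkn,\,N_{K/\Q}(\fkl)=\ell}[\fkl]\psi(\fkl)$ is consistent with how $T_\ell'$ actually operates on $H^1(Y_1(N)(\bC),\Z)$ at those primes — i.e. that the $\cO[H_{\fkn}]$-valued eigensystem lifting the $\theta_\psi$-eigensystem is still a well-defined algebra map there, with the correct (possibly degenerate) Euler factors. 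This is precisely the content of \cite[Prop.~3.2.1]{LLZ-K}, so in the write-up I would either invoke that proposition verbatim (the cleanest option, since the statement is quoted as such) or, if a self-contained argument is wanted, reduce to it by noting that $\mathbb{T}'(N)$ is generated by the $T_\ell'$ and $\langle d\rangle'$ subject only to the relations above and that the target is $p$-torsion-free, so the homomorphism is forced by its values on the good primes by density/Chebotarev once one knows a single eigenform with this system exists. I expect the honest route to simply be: cite Proposition~3.2.1 of \cite{LLZ-K}, and include the split-prime Euler-factor computation above as the illustrative verification.
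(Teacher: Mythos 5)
The paper gives no proof of Proposition~\ref{phi_n}; it is a verbatim restatement of Proposition~3.2.1 of \cite{LLZ-K}, cited as such. Your proposal correctly identifies that citation as the honest route, and your split/inert Euler-factor computations are right as far as they go (in the split case you implicitly use $\epsilon_K(\ell)=1$; in the inert case $\phi_{\fkn}(T_\ell')=0$ and $\epsilon_K(\ell)=-1$ supplies the matching sign).

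However, the fallback ``self-contained'' argument you sketch has a gap that you do not quite name. The algebra $\mathbb{T}'(N)$ is \emph{not} the abstract polynomial Hecke algebra: by definition it is the image subalgebra of $\End_\Z(H^1(Y_1(N)(\bC),\Z))$, hence a finite $\Z$-algebra, and the surjection from the abstract algebra onto it has a nontrivial kernel (for instance, each $T_\ell'$ satisfies an integral polynomial of bounded degree, since $H^1$ has finite rank). Verifying the Hecke recursions, multiplicativity, and the character property of $d\mapsto\langle d\rangle'$ only checks relations already present in the abstract algebra; it says nothing about whether the proposed assignment kills this kernel. So the concern is not just ``bookkeeping at bad primes'' as you suggest, but pervasive, and the premise in your reduction --- that $\mathbb{T}'(N)$ is ``generated by the $T_\ell'$ and $\langle d\rangle'$ subject only to the relations above'' --- is false. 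Likewise, a density/Chebotarev argument can at most tell you the map is \emph{unique} once it exists; it does not establish well-definedness on a quotient algebra. The proof in \cite{LLZ-K} avoids this by constructing $\phi_{\fkn}$ intrinsically from the $\mathbb{T}'(N)$-module structure on the relevant (CM) part of cohomology, so that well-definedness is automatic rather than a consequence of generator-and-relation checking. Since your primary recommendation is simply to cite \cite[Prop.~3.2.1]{LLZ-K} --- which matches what the paper does --- your write-up is in the end correct, but you should delete or substantially amend the proposed ``self-contained'' reduction.
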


\vspace{10pt}

For $\fkm=\fkn\fkl$, with $\fkl$ a prime ideal and $(\fkm,p)=1$, put $M=N_{K/\Q}(\fkm)\text{disc}(K/\Q)$ and one has the following map
\begin{equation}
\mathcal{N}_{\fkn}^{\fkm}:\mathcal{O}[H_{\fkm}^{(p)}]\otimes_{\mathbb{T}'(M)\otimes\Z_p,\phi_{\fkm}}H^1_{\et}(Y_1(M)_{\overline{\Q}},\Z_p(1))\rightarrow \mathcal{O}[H_{\fkn}^{(p)}]\otimes_{\mathbb{T}'(N)\otimes\Z_p,\phi_{\fkn}}H^1_{\et}(Y_1(N)_{\overline{\Q}},\Z_p(1)).\nonumber
\end{equation}
This norm map is defined explicitly by splitting into $3$ cases (see \cite[Sec.\,1.1.2]{CD} for the definition of the degeneracy map): 
\begin{itemize}
    \item If $\fkl\mid\fkn$ then 
    \begin{equation*}
        \mathcal{N}_{\fkn}^{\fkm}=1\otimes \text{pr}_{1*};
    \end{equation*}
    \item If $\fkl\nmid\fkn$ is split or ramified in $K$ and $N_{K/\Q}(\fkl)=\ell$, then 
    \begin{equation*}
        \mathcal{N}_{\fkn}^{\fkm}=1\otimes \text{pr}_{1*}-\frac{\psi(\mathfrak{l)}[\fkl]}{\ell}\otimes \text{pr}_{\ell*};
    \end{equation*}
    \item If $\fkl\nmid\fkn$ is inert in $K$, say $\fkl=(\ell)$, then 
    \begin{equation*}
        \mathcal{N}_{\fkn}^{\fkm}=1\otimes \text{pr}_{1*}-\frac{\psi(\fkl)[\fkl]}{\ell^2}\otimes \text{pr}_{\ell\ell*}.
    \end{equation*}
\end{itemize}
Note that one can extend the definition of $\mathcal{N}_{\fkn}^{\fkm}$ to any pair of ideals $\fkn\mid\fkm$ by composition.

Following \cite[Sec.\,1.3.2]{CD}, if $p$ splits in $K$ and $(p,\fkf)=1$ then for any ideal $\fkn\subset\cO_K$ divisible by $\mathfrak{f}$ such that $(\fkn,\ppbar)=1$, the maximal ideal of $\mathbb{T}'(N)$ defined by the kernel of the composition
\[
\mathbb{T}'(N)\overset{\phi_\fkn}\longrightarrow\cO[H_\fkn]\overset{{\rm aug}}\longrightarrow\cO\rightarrow\cO/\mathfrak{P},
\]
is non-Eisenstein, $p$-ordinary, and $p$-distinguished. 

We finish this section by extracting a crucial result in \cite{LLZ-K} in the case where $p$ splits in $K$. This will be used later to prove the norm relation of our Euler system.

\begin{thm}[Corollary 5.2.6 in \cite{LLZ-K}]\label{norm1}
Assume that $(p,\fkf)=1$. Let $\mathcal{A}$ be the set of ideals $\fkm\subset\cO_K$  with $(\fkm,\ppbar)=1$, and put $\mathcal{A}_{\fkf}=\{\fkf\fkm\colon\fkm\in\mathcal{A}\}$. Given  $\fkn\in \mathcal{A}_{\mathfrak{f}}$, there is a $G_{\Q}$-equivariant isomorphism of $\mathcal{O}[H_{\fkn}^{(p)}]$-modules 
\begin{equation*}
    \nu_{\fkn}:\mathcal{O}[H_{\fkn}^{(p)}]\otimes_{\mathbb{T}'(N)\otimes\Z_p,\phi_{\fkn}}H^1_{\et}(Y_1(N)_{\overline{\Q}},\Z_p(1))\overset{\cong}{\longrightarrow}{\rm Ind}^{\Q}_{K(\fkn)}\mathcal{O}(\psi_{\mathfrak{P}}^{-1}).
\end{equation*}
 Furthermore, for any $\fkn,\fkm\in \mathcal{A}_\mathfrak{f}$ with $\fkn\mid\fkm$, the following diagram commutes:
\begin{equation*}
    \xymatrix{
\mathcal{O}[H_{\fkm}^{(p)}]\otimes_{\mathbb{T}'(M)\otimes\Z_p,\phi_{\fkm}}H^1_{\et}(Y_1(M)_{\overline{\Q}},\Z_p(1)) \ar[d]_{\mathcal{N}_{\fkn}^{\fkm}} \ar[r]^(.7){\nu_{\fkm}}_(.7){\cong} & {\rm Ind}^{\Q}_{K(\fkm)}\mathcal{O}(\psi_{\mathfrak{P}}^{-1}) \ar[d]_{\mathrm{Norm}_{\fkn}^{\fkm}} \\
\mathcal{O}[H_{\fkn}^{(p)}]\otimes_{\mathbb{T}'(N)\otimes\Z_p,\phi_{\fkn}}H^1_{\et}(Y_1(N)_{\overline{\Q}},\Z_p(1)) \ar[r]^(.7){ \nu_{\fkn}}_(.7){\cong} & {\rm Ind}^{\Q}_{K(\fkn)}\mathcal{O}(\psi_{\mathfrak{P}}^{-1}), }
\end{equation*}
where $\mathrm{Norm}_{\fkn}^{\fkm}$ is the natural norm map of the induced representations.
\end{thm}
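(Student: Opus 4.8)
The ring $R_\fkn := \cO[H_\fkn^{(p)}]$ is local --- it is the group algebra of a finite $p$-group over the complete local ring $\cO$ --- and by the discussion preceding Theorem~\ref{norm1} the composite of $\phi_\fkn$ with $\cO[H_\fkn]\to R_\fkn$ and the residue map $R_\fkn\to\cO/\mathfrak{P}$ is the non-Eisenstein, $p$-ordinary, $p$-distinguished maximal ideal $\fkm$ of $\mathbb{T}'(N)$. I would first note that $\phi_\fkn$ therefore factors through the localization $\mathbb{T}'(N)_\fkm$, so that the left-hand module of $\nu_\fkn$ equals $R_\fkn\otimes_{\mathbb{T}'(N)_\fkm,\,\phi_\fkn}M_\fkm$, where $M_\fkm$ is the localization at $\fkm$ of $H^1_{\et}(Y_1(N)_{\overline{\Q}},\Z_p(1))\otimes_{\Z_p}\cO$. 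Since $\fkm$ is non-Eisenstein, $M_\fkm$ coincides with the localization of $H^1_{\et}(X_1(N)_{\overline{\Q}},\cO(1))$ (the cusps contribute only at Eisenstein maximal ideals), and by the multiplicity-one theorem of Mazur--Wiles and its refinements --- the point at which the non-Eisenstein and $p$-distinguished hypotheses are used --- it is free of rank $2$ over $\mathbb{T}'(N)_\fkm$; hence it affords a continuous representation $\rho_\fkm\colon G_\Q\to\GL_2(\mathbb{T}'(N)_\fkm)$ with $\mathrm{tr}\,\rho_\fkm(\mathrm{Frob}_\ell)=T_\ell'$ for $\ell\nmid Np$ and with $\bar\rho_\fkm$ absolutely irreducible.

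Next I would identify the two sides as $R_\fkn[G_\Q]$-modules. Pushing $\rho_\fkm$ forward along $\phi_\fkn$ gives a free rank-$2$ $R_\fkn$-module $M$ on which $G_\Q$ acts with $\mathrm{tr}(\mathrm{Frob}_\ell\mid M)=\phi_\fkn(T_\ell')=\sum_{\fkl\nmid\fkn,\ N_{K/\Q}(\fkl)=\ell}[\fkl]\psi(\fkl)$, by Proposition~\ref{phi_n}. On the other side, writing $\mathrm{Ind}_{K(\fkn)}^\Q\cO(\psi_{\mathfrak{P}}^{-1})=\mathrm{Ind}_K^\Q\bigl(\mathrm{Ind}_{K(\fkn)}^K\cO(\psi_{\mathfrak{P}}^{-1})\bigr)$, the inner induced module is free of rank $1$ over $R_\fkn=\cO[\mathrm{Gal}(K(\fkn)/K)]$, carrying the twisted $G_K$-action in which $g$ acts by $\psi_{\mathfrak{P}}(g)^{-1}$ times its image in $\mathrm{Gal}(K(\fkn)/K)$; hence $N_\fkn:=\mathrm{Ind}_{K(\fkn)}^\Q\cO(\psi_{\mathfrak{P}}^{-1})$ is free of rank $2$ over $R_\fkn$ with a continuous $G_\Q$-action. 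A direct computation shows that $\mathrm{tr}(\mathrm{Frob}_\ell\mid N_\fkn)$ vanishes for $\ell$ inert in $K$ and equals $\sum_{N_{K/\Q}(\fkl)=\ell}[\fkl]\psi(\fkl)$ for $\ell$ split or ramified: this is the ray-class refinement of the classical identification $V_{\theta_\psi}^\vee\cong\mathrm{Ind}_K^\Q L_{\mathfrak{P}}(\psi^{-1})$ recalled above, and it already incorporates the $\cO(1)$-twist and the geometric normalization of Frobenius. Thus $\mathrm{tr}(\mathrm{Frob}_\ell\mid M)=\mathrm{tr}(\mathrm{Frob}_\ell\mid N_\fkn)$ for all $\ell\nmid Np$; since $R_\fkn$ is a complete local ring, $2\in\cO^\times$, $\bar\rho_\fkm$ is absolutely irreducible, and both modules are $R_\fkn$-free of rank $2$, the rigidity of pseudo-representations over local rings (Carayol, Nyssen) yields an $R_\fkn[G_\Q]$-isomorphism $M\xrightarrow{\sim}N_\fkn$. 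I would then pin down $\nu_\fkn$ by demanding that its reduction modulo the augmentation ideal of $R_\fkn$ agree with the classical identification above; absolute irreducibility of $\bar\rho_\fkm$ makes this normalization unique.

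For the commutative square I would reduce, via the composition property of $\mathcal{N}_\fkn^\fkm$ and $\mathrm{Norm}_\fkn^\fkm$ noted after the definition of $\mathcal{N}_\fkn^\fkm$, to the case $\fkm=\fkn\fkl$ with $\fkl$ a prime, and then treat the three cases in the definition of $\mathcal{N}_\fkn^\fkm$ in turn. Transporting the degeneracy pushforwards $\mathrm{pr}_{1*}$, $\mathrm{pr}_{\ell*}$, $\mathrm{pr}_{\ell\ell*}$ through $\nu_\fkm$ and $\nu_\fkn$: the map $\mathrm{pr}_{1*}$ corresponds to the canonical norm along $K(\fkm)\supseteq K(\fkn)$, while the extra pushforwards together with the scalars $\psi(\fkl)[\fkl]/\ell$ (resp.\ $\psi(\fkl)[\fkl]/\ell^2$) reassemble --- by the standard relation at $\ell$ between the two degeneracy maps and $T_\ell'$, now twisted by the ray-class data --- into precisely the Euler-factor discrepancy between $\mathrm{pr}_{1*}$ and $\mathrm{Norm}_\fkn^\fkm$; matching the two expressions is then a finite explicit verification. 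I expect the real difficulty to lie not in this last step but in securing the freeness/multiplicity-one input (where the non-Eisenstein and $p$-distinguished hypotheses are essential) and, above all, in keeping every normalization straight --- the $\cO(1)$-twist, geometric versus arithmetic Frobenius, the auxiliary characters $\chi_\psi\epsilon_K$ --- so that the Frobenius traces coincide exactly rather than up to an unramified twist.
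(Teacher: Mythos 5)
The paper does not give a proof of this statement: it is quoted verbatim from Corollary~5.2.6 of \cite{LLZ-K}, and the surrounding text explicitly treats it as an external input ("We finish this section by extracting a crucial result in \cite{LLZ-K}\dots"). So there is no in-paper argument to measure your sketch against; the comparison has to be against the cited source.

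That said, your outline captures the essential ingredients of the LLZ-K argument and would, if fleshed out, give a correct proof. The localization step is right: since $H_{\fkn}^{(p)}$ is a finite $p$-group and $\cO$ is a complete local ring, $R_{\fkn}=\cO[H_{\fkn}^{(p)}]$ is local, and the composite $\mathbb{T}'(N)\to R_{\fkn}\to \cO/\mathfrak{P}$ defines the maximal ideal $\fkm$ which, per the discussion in $\S$\ref{LLZrecall}, is non-Eisenstein, $p$-ordinary and $p$-distinguished. These are precisely the hypotheses under which the Mazur--Wiles--Tilouine multiplicity-one theorem delivers freeness of rank two for $H^1_{\et}(X_1(N)_{\overline{\Q}},\cO(1))_{\fkm}$ over $\mathbb{T}'(N)_{\fkm}$, and the non-Eisenstein hypothesis lets one replace $Y_1(N)$ by $X_1(N)$ without changing the localization. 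Matching geometric Frobenius traces (using $\phi_{\fkn}(T'_\ell)=\sum_{\fkl\nmid\fkn,\,N(\fkl)=\ell}[\fkl]\psi(\fkl)$ on one side and the ray-class induction on the other) and invoking Carayol--Nyssen rigidity over the local ring $R_{\fkn}$ is a clean way to obtain the isomorphism. This is somewhat more streamlined than the presentation in \cite{LLZ-K}, who build the map $\nu_{\fkn}$ more explicitly, essentially one character of $H_{\fkn}^{(p)}$ at a time, and deduce freeness of the patched module before identifying it; but the backbone (multiplicity one, trace comparison) is the same.

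Two things deserve explicit attention if you flesh this out. First, the trace identity you assert for $N_{\fkn}$ is where all the sign and normalization conventions land at once --- geometric versus arithmetic Frobenius, the Tate twist $\cO(1)$, the convention that $T'_\ell$ (not $T_\ell$) acts as $a_\ell(\theta_\psi)$ on $V_{\theta_\psi}^\vee$, and the $p$-adic avatar normalization $\psi_{\mathfrak{P}}$. You flag this correctly as the real danger, and it is: a superficially plausible sign error here produces $\psi(\fkl)^{-1}$ rather than $\psi(\fkl)$ and kills the argument. Second, for the commutative square, the reduction to $\fkm=\fkn\fkl$ and the three-case check is the right structure, but the punchline depends on relating $\mathrm{pr}_{\ell*}$ (and $\mathrm{pr}_{\ell\ell*}$ in the inert case) to $T'_\ell$ and the diamond operator at $\ell$ via the Eichler--Shimura congruence, and then pushing that relation through $\phi_{\fkn}$ and $\nu_{\fkn}$; this is the content of LLZ-K's Proposition~5.2.5 and should not be treated as immediate. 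Subject to those two technical verifications, your proposal is sound.
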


\section{The construction}\label{sec:main-thms}

For a newform $f$ and two Hecke characters $\psi_1, \psi_2$ of $2$ distinct imaginary quadratic fields $K_1, K_2$ respectively, using the results from \cite{CD}, \cite{BSV} and \cite{LLZ-K} recalled in the preceding section, we construct a family of cohomology classes for $f\otimes\tilde{\psi}_1\tilde{\psi}_2$ defined over ring class field extensions of $K_0$, which is the compositum of $K_1$ and $K_2$, and prove that they satisfy the norm relations of an anticyclotomic Euler system. Following \cite[Sec.\,2]{CD}, we first give the construction and show the tame norm relations in the case where $(f,\theta_{\psi_1},\theta_{\psi_2})$ have weights $(2,2,2)$. Then by varying the diagonal cycle classes in Hida families we extend the construction to more general weights and prove the wild norm relations.

Throughout this section  we consider the following  set-up:
\begin{enumerate}
    \item Let $f\in S_k(\Gamma_0(N_f))$ be a newform of weight $k\geq 2$.
    \item Let $K_1/\Q$ be an imaginary quadratic field of discriminant $D_1$ coprime to $N_f$. Let $\psi_1$ be a Hecke character of $K_1$ of infinity type $(1-k_1,0)$, with $k_1\geq 1$, and modulus  $\fkf_1$.
    \item  Let $K_2/\Q$ be an imaginary quadratic field of discriminant $D_2\neq D_1$ and coprime to $N_f$. Let $\psi_2$ be a Hecke character of $K_2$ of infinity type $(1-k_2,0)$, with $k_2\geq 1$, and modulus  $\fkf_2$.
    \item Denote by $\epsilon_{K_i}$ the quadratic character attached to the quadratic field $K_i$ for $i\in\{1,2\}$.
    \item Let $K_0$ be the compositum of $K_1$ and $K_2$. Since $K_0$ is a biquadratic field, we can consider $K_3$, the unique real quadratic field inside $K_0$. 
    \item Let $\tilde{\psi}_i$ be the Hecke character of $K_0$, obtained by composing $\mathbb{A}_{K_0}^{\times}\xrightarrow{\mathbb{N}_{K_0/K_i}}\mathbb{A}_{K_i}^{\times}\xrightarrow{\psi_i}\mathbb{C}$ for $i\in\{1,2\}$.
    \item Denote by 
    \[\theta_{\psi_i}\in S_{k_i}(N_{\psi_i},\chi_{\psi_i}\epsilon_{K_i}) \]
    the associated theta series, where $N_{\psi_i}=N_{K_i/\Q}(\fkf_i)\cdot{\rm disc}(K_i/\Q)$ and $\chi_{\psi_i}$ is the Dirichlet character modulo $N_{K_i/\Q}(\fkf_i)$ defined by $\psi_i((n))=n^{k_i-1}\chi_{\psi_i}(n)$ for all integers $n$ prime to $N_{K_i/\Q}(\fkf_i)$ ($i\in\{1,2\}$). 
    \item We assume the self-duality condition
    \begin{equation}\label{eq:self-dual}
{\chi_{\psi_1}\epsilon_{K_1}\chi_{\psi_2}\epsilon_{K_2}=1}.
    \end{equation}
\end{enumerate}

Let $L/K_0$ be a finite extension containing the Fourier coefficients of $f$, $\theta_{\psi_1}$, and $\theta_{\psi_2}$. Let $p\geq 5$ be a prime that splits in $K_0$ and such that $(p,N_fN_{\psi_1}N_{\psi_2})=1$, and let $\mathfrak{P}\vert\mathfrak{p}$ be the prime of  $L/K_0$ above $p$ determined by a fixed embedding $i_p:\overline{\Q}\hookrightarrow\overline{\Q}_p$. Finally, let $L_\mathfrak{P}$ be the completion of $L$ at $\mathfrak{P}$, and denote by $\cO$ the ring of integers of $L_{\mathfrak{P}}$.

\subsubsection{Digression to primes decomposition and 
 the top left-corner notations}\label{subsec:prime-decomp} Let $\ell$ be a split prime in $K_0$ i.e. $(\ell)=\cL_1\cL_2\cL_3\cL_4$. We can write $(\ell) = \la_1\bar{\la}_1$ and $(\ell) = \la_2\bar{\la}_2$ in $K_1$ and $K_2$ respectively. Note that $\ell$ also splits in $K_3$ as $\la_3\Tilde{\la}_3$, where the tilde corresponds to the nontrivial element generating the Galois group $\mathrm{Gal}(K_3/\Q)$.

 Let $\tau_i$ to be the generator of $\mathrm{Gal}(K_0/K_i)$ for $i=\{1,2,3\}$ then we have $\tau_3=\tau_1\tau_2$ (this is the complex conjugation on $K_3$). Due to the Galois group action on primes lying above $\ell$, we can further assume that:
\[\begin{split}
    \la_1=\cL_1\cL_4,\qquad \bar{\la}_1=\cL_3\cL_2,\qquad  \la_2=\cL_1\cL_3,\qquad \bar{\la}_2=\cL_2\cL_4, \\
     \la_3 = \cL_4\cL_3\qquad (\text{so } \la_3|\la_1\la_2),\qquad\text{and }\qquad \Tilde{\la}_3 = \cL_1\cL_2,
\end{split}\]
where
\begin{equation*}
    \cL_4=\tau_1\cL_1,\qquad \cL_3=\tau_2\cL_1, \qquad \cL_2=\tau_3\cL_1=\tau_1\tau_2\cL_1.
\end{equation*}

Denote by $\mathcal{L}$ the set of primes $\la_3\subset\cO_{K_3}$, where $\ell=N_{K_3/\Q}(\la_3)$ is prime to $p$ and $\ell$ splits in $K_0$. Let $\mathcal{N}$ be the set of squarefree products of primes inside $\mathcal{L}$ such that its norm down to $\Q$ is still squarefree. For such $\la_3$, we can choose $\la_1\subset\cO_{K_1}$ and $\la_2\subset\cO_{K_2}$ as above such that $\la_3|\la_1\la_2$.

Let $\mu_3\in \mathcal{N}$ and $N_{K_3/\Q}(\mu_3)=m$. Then its norm $m=\prod_i \ell_i$ will be a product of split primes $\ell_i$ in $K_0$. Similarly, we can decompose $(m)=\cM_1\cM_2\cM_3\cM_4$, $(m)=\mu_1\bar{\mu}_1$, $(m)=\mu_2\bar{\mu}_2$, $(m)=\mu_3\Tilde{\mu}_3$ as a decomposition inside $K_0$, $K_1$, $K_2$ and $K_3$ respectively, where we can have the following decomposition:
\[\begin{split}
    \mu_1=\cM_1\cM_4,\qquad \bar{\mu}_1=\cM_3\cM_2,\qquad  \mu_2=\cM_1\cM_3,\qquad \bar{\mu}_2=\cM_2\cM_4, \\
     \mu_3 = \cM_4\cM_3\qquad (\text{so } \mu_3|\mu_1\mu_2),\qquad\text{and }\qquad \Tilde{\mu}_3 = \cM_1\cM_2.
\end{split}\]
Here, for every $i$, $\cM_j=\prod_{i}\cL_{j,i}$, $\ell_i=\prod_j \cL_{j,i}$, for $1\le j\le 4$, $\mu_j=\prod_i \la_{j,i}$ for every $j\in\{1,2,3\}$.

For each $i\in \{0,1,2\}$, we denote $^{i}K_{\fkn_i}$ as the ray class field of $K_i$ with conductor $\fkn_i$ (an integral ideal inside $\mathcal{O}_{K_i})$, and let $^{i}H_{\fkn_i}$ be the ray class group of $K_i$ modulo $\fkn_i$. Let $K_i(\fkn_i)$ be the largest $p$-subextension of $K_i$ contained in $^{i}K_{\fkn_i}$, so ${\rm Gal}(K_i(\fkn_i)/K_i)\cong {^{i}H_{\fkn_i}^{(p)}}$ is the largest $p$-power quotient of $^{i}H_{\fkn_i}$.

\subsection{Construction in weight $(2,2,2)$ and tame norm relation}
\label{subsec:tame}
 Suppose in this subsection that $(k,k_1,k_2)=(2,2,2)$. Let $N=\text{lcm}(N_f,N_{\psi_1},N_{\psi_2})$. Following Section $2.1$ of \cite{CD}, which is based on the diagonal classes in the triple product of modular curves \cite[Sec.\,3]{BSV}, we have cohomology classes:
\begin{equation}\label{kappa3}
\mathcal{Z}_m^{(1)}:=\tilde{\kappa}_{m}^{(3)}\in H^1\bigl(\Q,H^1_{\et}(Y_1(N)_{\overline{\Q}},\Z_p(1))\otimes H^1_{\et}(Y_1(N_{\psi_1}m)_{\overline{\Q}},\Z_p(1))\otimes H^1_{\et}(Y_1(N_{\psi_2}m)_{\overline{\Q}},\Z_p(1))(-1)\bigr).
\end{equation}
for every positive integer $m$. One then chooses a test vector $\breve{f}\in S_2(N)[f]$. As noted in op. cit., the maps used to construct $\mathcal{Z}_m^{(1)}$ are compatible with correspondences. This allows one to tensor them with $\cO$ and obtain:
\begin{align*}
\mathcal{Z}_{\mu_3}^{(1)}\in H^1\bigl(\Q,T_{f}^{\vee}&\otimes H^1_{\et}(Y_1(N_{\psi_1}m)_{\overline{\Q}},\Z_p(1))\otimes_{\mathbb{T}'(N_{\psi_1}m)}\mathcal{O}[^{1}H_{\mathfrak{f}_1\mu_1}^{(p)}]\\
 &\quad\otimes H^1_{\et}(Y_1(N_{\psi_2}m)_{\overline{\Q}},\Z_p(1))\otimes_{\mathbb{T}'(N_{\psi_2}m)}\mathcal{O}[^{2}H_{\mathfrak{f}_2\mu_2}^{(p)}](-1)\bigr).
\end{align*}
Here, the chosen $\breve{f}$ is used to take the image under the  projection $H^1_{\et}(Y_{\overline{\Q}},\Z_p(1))\rightarrow T_f^\vee$ in the first factor. The tensor products are taken from Proposition\,\ref{phi_n} 
\[
\phi_{\mathfrak{f}_1\mu_1}:\mathbb{T}'(N_{\psi_1}m)\rightarrow\mathcal{O}[^{1}H_{\mathfrak{f}_1\mu_1}^{(p)}],\quad 
\phi_{\mathfrak{f}_2\mu_2}:\mathbb{T}'(N_{\psi_2}m)\rightarrow\mathcal{O}[^{2}H_{\mathfrak{f}_2\mu_2}^{(p)}]
\]
with respect to two distinct imaginary quadratic fields $K_1$ and $K_2$, respectively .  

Via the isomorphisms from Proposition \ref{norm1} with respect to $2$ distinct imaginary quadratic fields:
\begin{align*}
\nu_{\fkf_1\mu_1}:H^1_{\et}(Y_1(N_{\psi_1}m)_{\overline{\Q}},\Z_p(1))\otimes_{\mathbb{T}'(N_{\psi_1}m)}\mathcal{O}[^{1}H_{\mathfrak{f}_1\mu_1}^{(p)}]&\xrightarrow{\sim}{\rm Ind}^{\Q}_{K_1(\fkf_1\mu_1)}\mathcal{O}(\psi_{1}^{-1}),\\
\nu_{\fkf_2\mu_2}:H^1_{\et}(Y_1(N_{\psi_2}m)_{\overline{\Q}},\Z_p(1))\otimes_{\mathbb{T}'(N_{\psi_2}m)}\mathcal{O}[^{2}H_{\mathfrak{f}_2\mu_2}^{(p)}]&\xrightarrow{\sim}{\rm Ind}^{\Q}_{K_2(\fkf_2\mu_2)}\mathcal{O}(\psi_{2}^{-1}),
\end{align*}
one then obtains a cohomology class in
\[
H^1\bigl(\Q,T_f^{\vee}\otimes_{\mathcal{O}}\text{Ind}^{\Q}_{K_1(\fkf_1\mu_1)}\mathcal{O}(\psi_1^{-1})\otimes_{\mathcal{O}}\text{Ind}^{\Q}_{K_2(\fkf_2\mu_2)}\mathcal{O}(\psi_2^{-1})(-1)\bigr),
\]
which under the maps $
^{1}H_{\fkf_1\mu_1}\twoheadrightarrow {^{1}H_{\mu_1}}$ and $^{2}H_{\fkf_2\mu_2}\twoheadrightarrow {^{2}H_{\mu_2}}$ can be projected to a class
\begin{equation}\label{eq:inf-res-decomp}
\mathcal{Z}_{\mu_3}^{(2)}\in H^1\bigl(\Q,T_f^{\vee}\otimes_{\mathcal{O}}{\rm Ind}^{\Q}_{K_1}\mathcal{O}_{\psi_1^{-1}}[^{1}H_{\mu_1}^{(p)}]\otimes_{\mathcal{O}}{\rm Ind}^{\Q}_{K_2}\mathcal{O}_{\psi_2^{-1}}[^{2}H_{\mu_2}^{(p)}](-1)\bigr).
\end{equation}
Note that the group cohomology can be rewritten as
\[
H^1\bigl(\Q,T_f^{\vee}\otimes_{\mathcal{O}}{\rm Ind}^{\Q}_{K_0}\mathcal{O}_{\tilde{\psi}_1^{-1}\tilde{\psi}_2^{-1}}[{^{1}H_{\mu_1}^{(p)}}\times {^{2}H_{\mu_2}^{(p)}}](-1)\bigr),\]
which by Shapiro's lemma gives us elements:
\begin{equation}
    \label{eqn:pre-shapiro}\mathcal{Z}_{\mu_3}^{(3)}\in
H^1\bigl(K_0,T_f^{\vee}\otimes_{\mathcal{O}}\mathcal{O}_{\tilde{\psi}_1^{-1}\tilde{\psi}_2^{-1}}[{^{1}H_{\mu_1}^{(p)}}\times {^{2}H_{\mu_2}^{(p)}}](-1)\bigr).
\end{equation}

\subsubsection{Projection to ring class groups}

Recall the fundamental exact sequence for ray class groups:
\[
\xymatrix{
\cO_{K_i}^\times\ar[r]&(\cO_{K_i}/\mu_i\cO_{K_i})^\times\ar[r]&^{i}H_{\mu_i}\ar[r]& ^{i}H_1\ar[r]&1,
}
\]
where $i\in\{1,2\}$. Assume that $p\nmid 6h_{K_0}$, where $h_{K_0}$ is the class number of $K_0$. Note that by \cite[Thm.\,74]{Frohlich_Taylor_1991}, we have
\[p\nmid h_{K_i},\]
which is the class number of $K_i$ for $i\in\{1,2,3\}$. Taking the $p$-primary parts of the above exact sequence induces two isomorphisms
\begin{align*}
    &^{1}H_{\mu_1}^{(p)}\xrightarrow{\simeq} (\cO_{K_1}/\mu_1\cO_{K_1})^{\times,(p)} \xrightarrow{\simeq} (\cO_{K_0}/\cM_4\cO_{K_0})^{\times,(p)} \\
   & ^{2}H_{\mu_2}^{(p)}\xrightarrow{\simeq} (\cO_{K_2}/\mu_2\cO_{K_2})^{\times,(p)} \xrightarrow{\simeq} (\cO_{K_0}/\cM_3\cO_{K_0})^{\times,(p)}
\end{align*}
and hence the following projection:
\begin{equation}\label{eq:H-m-decompose}
    ^{1}H_{\mu_1}^{(p)}\times {^{2}H}_{\mu_2}^{(p)} \xrightarrow{\simeq} (\cO_{K_0}/\mu_3\cO_{K_0})^{\times,(p)} \twoheadrightarrow {^{0}H_{\mu_3}^{(p)}}.
\end{equation}

Recall that $K_3$ is the totally real field sitting inside a CM field $K_0$. Given an integral ideal $\fkn$ of $K_3$, let $H[\fkn]$ be the ring class group of $K_0$ of conductor $\fkn$, so 
$H[\fkn]\simeq{\rm Pic}(\cO_\fkn)$ under the Artin reciprocity map, where $\cO_\fkn=\cO_{K_3}+\fkn\cO_{K_0}$ is the order of $K_0$ of conductor $\fkn$. Let $H[\fkn]^{(p)}$ be the maximal $p$-power quotient of $H[\fkn]$, and denote by $K_0[\fkn]$ be the maximal $p$-extension inside the ring class field of $K_0$ of conductor $\fkn$, i.e. $H[\fkn]^{(p)}={\rm Gal}(K_0[\fkn]/K_0)$. Note that for the ring class groups and fields of $K_0$, we drop the upper left corner $0$ notation.

\begin{prop}\label{ringclass} Suppose $p\nmid 6h_{K_0}$ and $\mu_3$ is a squarefree ideal of $\cO_{K_3}$ of norm $m$, where $m$ is divisible only by primes that are split in $K_0$. Using (\ref{eq:H-m-decompose}), we have the following short exact sequence:
\[
1\rightarrow(\cO_{K_3}/\mu_3\cO_{K_3})^{\times, (p)}\xrightarrow{\Delta}{^{1}H}_{\mu_1}^{(p)}\times {^{2}H}_{\mu_2}^{(p)}\xrightarrow{e\circ w} H[\mu_3]^{(p)}\rightarrow 1.
\]
Here, the map $\Delta$ uses the identifications 
\[(\cO_{K_3}/\mu_3\cO_{K_3})^{\times,(p)} \simeq (\cO_{K_0}/\cM_4\cO_{K_0})^{\times,(p)},\qquad (\cO_{K_3}/\mu_3\cO_{K_3})^{\times,(p)} \simeq (\cO_{K_0}/\cM_3\cO_{K_0})^{\times,(p)}.
\]
Moreover, if $(\ell)=\cL_1\cL_2\cL_3\cL_4$ is a prime that splits in $K_0$ and is coprime to $m$, the projection $e\circ w$ (defined in the proof below) sends 
\begin{align*}
      \quad  [\la_1]\times[\bar{\la}_2]&\mapsto \mathrm{Frob}_{\cL_4/\la_3} 
    \end{align*}
where $\mathrm{Frob}_{\cL_4/\la_3} $ is the Frobenius element of $\cL_4$ in $H[\mu_3]^{(p)}$. 
\end{prop}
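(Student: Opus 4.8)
I would treat the short exact sequence and the Frobenius formula separately, writing $w$ for the composite of (\ref{eq:H-m-decompose}) and $e\colon {}^{0}H_{\mu_3}^{(p)}={\rm Gal}(K_0(\mu_3)/K_0)\twoheadrightarrow {\rm Gal}(K_0[\mu_3]/K_0)=H[\mu_3]^{(p)}$ for the restriction map, which is onto because the ring class field of conductor $\mu_3$ lies inside the ray class field of conductor $\mu_3$. For the exact sequence the plan is to compare the ray class field exact sequence for $K_0$ of conductor $\mu_3$,
\[
\cO_{K_0}^{\times}\longrightarrow(\cO_{K_0}/\mu_3\cO_{K_0})^{\times}\longrightarrow{}^{0}H_{\mu_3}\longrightarrow{}^{0}H_{1}\longrightarrow1,
\]
with the Picard group exact sequence of the order $\cO_{\mu_3}=\cO_{K_3}+\mu_3\cO_{K_0}$,
\[
\cO_{K_0}^{\times}\longrightarrow(\cO_{K_0}/\mu_3\cO_{K_0})^{\times}\big/(\cO_{\mu_3}/\mu_3\cO_{K_0})^{\times}\longrightarrow{\rm Pic}(\cO_{\mu_3})\longrightarrow{\rm Pic}(\cO_{K_0})\longrightarrow1.
\]
Since $\mu_3$ is squarefree and each of its prime factors splits in $K_0$, one has $\cO_{\mu_3}/\mu_3\cO_{K_0}\cong\cO_{K_3}/\mu_3$, embedded in $(\cO_{K_0}/\mu_3\cO_{K_0})^{\times}=(\cO_{K_0}/\cM_3)^{\times}\times(\cO_{K_0}/\cM_4)^{\times}$ precisely through the two identifications defining $\Delta$. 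Passing to $p$-primary parts and using the hypotheses — $p\nmid h_{K_0}$ makes ${\rm Pic}(\cO_{K_0})^{(p)}={}^{0}H_{1}^{(p)}=0$, and the image of $\cO_{K_0}^{\times}$ in each quotient has trivial $p$-part because $\cO_{K_3}^{\times}\subseteq\cO_{\mu_3}^{\times}$ and $[\cO_{K_0}^{\times}:\cO_{K_3}^{\times}]$ divides $2\,\#\mu_{K_0}$, which is prime to $p$ as $p\ge5$ and a biquadratic field has no $p$-th roots of unity — both sequences collapse to
\[
H[\mu_3]^{(p)}\cong(\cO_{K_0}/\mu_3\cO_{K_0})^{\times,(p)}\big/\Delta\bigl((\cO_{K_3}/\mu_3)^{\times,(p)}\bigr),\qquad {}^{0}H_{\mu_3}^{(p)}\cong(\cO_{K_0}/\mu_3\cO_{K_0})^{\times,(p)}\big/\overline{\cO_{K_0}^{\times}}^{(p)}.
\]
Tracing (\ref{eq:H-m-decompose}) through these identifications shows $e\circ w$ is the evident projection; since a fundamental unit of $K_0$ has a prime-to-$p$ power lying in $\cO_{K_3}^{\times}$, one has $\overline{\cO_{K_0}^{\times}}^{(p)}\subseteq\Delta((\cO_{K_3}/\mu_3)^{\times,(p)})$, so $\ker(e\circ w)=\Delta((\cO_{K_3}/\mu_3)^{\times,(p)})$, while $\Delta$ is injective and $e\circ w$ surjective, which is exactly the claimed sequence.

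For the Frobenius formula I would use that $\mathrm{Frob}_{\cL_4/\la_3}=e([\cL_4])$ by Artin reciprocity for $K_0$, where $[\cL_4]\in{}^{0}H_{\mu_3}^{(p)}$ is the ray class of $\cL_4$ (prime to $\mu_3$ since $\ell\nmid m$); by the first part $\ker(e)$ is the image of $(\cO_{K_3}/\mu_3)^{\times,(p)}$, so it suffices to identify $w([\la_1]\times[\bar\la_2])$ with $[\cL_4]$ modulo that image. As $p\nmid h_{K_i}$ for $i\in\{0,1,2\}$, I may replace $\la_1,\bar\la_2,\cL_4$ by their $n$-th powers for a common $n$ prime to $p$ (harmless, $n$-th power being an automorphism of each $p$-group in sight) and fix generators $\alpha_1\in\cO_{K_1}$ of $\la_1^{n}$, $\beta_2\in\cO_{K_2}$ of $\bar\la_2^{n}$, $\delta\in\cO_{K_0}$ of $\cL_4^{n}$. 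Unwinding (\ref{eq:H-m-decompose}) on principal classes — using the standard fact that under ${}^{1}H_{\mu_1}^{(p)}\cong(\cO_{K_1}/\mu_1\cO_{K_1})^{\times,(p)}$ the class of $(\gamma)$ goes to $\gamma^{-1}\bmod\mu_1$, and likewise for $K_2,K_0$ — shows $w([\la_1^{n}]\times[\bar\la_2^{n}])$ is the class of the unit $v\in(\cO_{K_0}/\mu_3\cO_{K_0})^{\times}$ with $v\equiv\alpha_1^{-1}\ (\cM_4)$ and $v\equiv\beta_2^{-1}\ (\cM_3)$, while $[\cL_4^{n}]$ is the class of $\delta^{-1}\bmod\mu_3$. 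Hence I must show $v\cdot(\delta\bmod\mu_3)\in\Delta((\cO_{K_3}/\mu_3)^{\times,(p)})\cdot\overline{\cO_{K_0}^{\times}}^{(p)}$. Transporting the $\cM_3$-component to the $\cM_4$-component through $\tau_3$ (which swaps $\cM_3\leftrightarrow\cM_4$, fixes $K_3$, and restricts to complex conjugation on $K_1$ and on $K_2$), this comes down to the single fact that
\[
\eta:=\frac{\alpha_1\,\tau_3(\delta)}{\delta\,\tau_3(\beta_2)}\in\cO_{K_0}^{\times}
\]
— a unit, since numerator and denominator both generate $\cL_1^{n}\cL_3^{n}\cL_4^{n}$ — satisfies $N_{K_0/K_3}(\eta)=N_{K_1/\Q}(\alpha_1)/N_{K_2/\Q}(\beta_2)=\ell^{n}/\ell^{n}=1$; Hilbert's Theorem 90 for $K_0/K_3$ then writes $\eta\equiv\eta_0/\tau_3(\eta_0)\ (\cM_4)$ for a unit $\eta_0$ (the relevant obstruction in $\widehat{H}^{-1}({\rm Gal}(K_0/K_3),\cO_{K_0}^{\times})$ is killed by $2$, hence dies in the $p$-group), and substituting $\eta_0$ back yields the claim.

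The step I expect to be the real obstacle is the second one: faithfully tracking $[\la_1]\times[\bar\la_2]$ through the chain of identifications in \S\ref{LLZ-K} and (\ref{eq:H-m-decompose}) — in particular pinning down the normalisation of the reciprocity maps so the inverses land on the right factors — and then clearing the residual global-unit ambiguity. The computation $N_{K_0/K_3}(\eta)=1$, which is exactly where the hypothesis that $\la_1,\bar\la_2,\la_3$ all divide one split rational prime $\ell$ is used, is the crux that makes the identification go through. (More conceptually, the same formula follows because $\cL_4$ lies above $\la_1$ in $K_1$ and above $\bar\la_2$ in $K_2$ with trivial residue extension, so $\mathrm{Frob}_{\cL_4}$ restricts to $[\la_1]$ on $K_1(\mu_1)$ and to $[\bar\la_2]$ on $K_2(\mu_2)$; as $K_0K_1(\mu_1)$ and $K_0K_2(\mu_2)$ are linearly disjoint over $K_0$ — disjoint ramification together with $p\nmid h_{K_0}$ — this pins down $\mathrm{Frob}_{\cL_4}$ inside ${\rm Gal}(K_0K_1(\mu_1)K_2(\mu_2)/K_0)\cong{}^{1}H_{\mu_1}^{(p)}\times{}^{2}H_{\mu_2}^{(p)}$, and $e\circ w$ is restriction to $K_0[\mu_3]$, which adds nothing new.)
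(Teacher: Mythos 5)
Your argument for the short exact sequence runs along the same lines as the paper's: you compare the ray class sequence for $K_0$ of conductor $\mu_3$ with the analogous sequence computing ${\rm Pic}(\cO_{\mu_3})$, take $p$-parts, kill ${\rm Pic}(\cO_{K_0})^{(p)}$ by the hypothesis $p\nmid h_{K_0}$, and observe that the image of the global units contributes nothing prime to $2\cdot\#\mu_{K_0}$. This is the content of the paper's two-row diagram; the only cosmetic difference is that you phrase the second row in terms of the Picard group of the order $\cO_{\mu_3}$ rather than directly as the ring class sequence.

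For the Frobenius formula, the paper disposes of it with the single observation that $\cL_4$ lies above both $\la_1$ in $K_1$ and $\bar\la_2$ in $K_2$, leaving the reader to unwind the identifications. Your main argument does that unwinding explicitly: you replace the three ideals by prime-to-$p$ powers so as to work with generators $\alpha_1,\beta_2,\delta$, you correctly identify the unit $\eta = \alpha_1\tau_3(\delta)/(\delta\,\tau_3(\beta_2))$, and the key computation $N_{K_0/K_3}(\eta)=N_{K_1/\Q}(\alpha_1)/N_{K_2/\Q}(\beta_2)=1$ is exactly right, after which Hilbert~90 mod the $p$-part finishes the job. This is a genuinely more explicit route than the paper's one-sentence justification and has the advantage of making the role of the self-duality $N_{K_1/\Q}(\la_1)=N_{K_2/\Q}(\bar\la_2)=\ell$ visible; the cost is that it is sensitive to the sign normalisations of the Artin map, which you correctly flag as the delicate point.

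One inaccuracy, though it does not affect your main argument: in the final parenthetical you claim $K_0K_1(\mu_1)$ and $K_0K_2(\mu_2)$ are linearly disjoint over $K_0$ because of ``disjoint ramification.'' Their ramification sets over $K_0$ are $\{\cM_1,\cM_4\}$ and $\{\cM_1,\cM_3\}$ respectively, which share $\cM_1$, so the ramification is not disjoint; and in fact the composite $w$ in (\ref{eq:H-m-decompose}) is not injective onto $\Gal(\cdot/K_0)$ of any compositum — it factors through the group-theoretic identification with $(\cO_{K_0}/\mu_3\cO_{K_0})^{\times,(p)}$ followed by a genuine surjection onto ${}^0H_{\mu_3}^{(p)}$. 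The intended conceptual content — that $\mathrm{Frob}_{\cL_4}$ restricts to $[\la_1]$ on $K_1(\mu_1)$ and to $[\bar\la_2]$ on $K_2(\mu_2)$ because $\cL_4$ lies above both with residue degree one, and that $e\circ w$ is compatible with these restrictions — is correct and is exactly the paper's remark, but the disjointness phrasing as written is not the right justification.
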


\begin{proof}
We have the following exact diagram: 
\[
\xymatrix{
\cO_{K_0}^\times\ar[r]\ar[d]&(\cO_{K_0}/\mu_3\cO_{K_0})^\times\ar[r]\ar[d]&^{0}H_{\mu_3}\ar[r]\ar[d]&^{0}H_1\ar[r]\ar[d]&1\\
\cO_{K_0}^\times/\cO_{K_3}^\times\ar[r]&(\cO_{K_0}/\mu_3\cO_{K_0})^\times/(\cO_{K_3}/\mu_3\cO_{K_3})^\times\ar[r]&H[\mu_3]\ar[r]&^{0}H_1\ar[r]&1.
}
\]
Taking the $p-$part of this, using the assumption that $p\nmid 6h_{K_0}$ together with the fact that $|\cO_{K_0}^\times/\cO_{K_3}^\times|$ is a power of $2$, we obtain the following diagram:
\[
\xymatrix{
\cO_{K_0}^{\times}\otimes \frac{\Q_p}{\Z_p}\ar[r]\ar[d]&(\cO_{K_0}/\mu_3\cO_{K_0})^{\times,(p)}\ar[r]^{w}\ar[d]&^{0}H_{\mu_3}^{(p)}\ar[r]\ar[d]^{e}&1\\
1\ar[r]&(\cO_{K_0}/\mu_3\cO_{K_0})^{\times,(p)}/(\cO_{K_3}/\mu_3\cO_{K_3})^{\times,(p)}\ar[r]&H[\mu_3]^{(p)}\ar[r]&1.
}
\]
Using the middle arrow and the identification \ref{eq:H-m-decompose}, we can show the first exact sequence.

One can show the second part by noting that $\cL_4$ is a prime of $K_0$ lying above both $\la_1$ (a prime of $K_1$) and $\bar{\la}_2$ (a prime of $K_2$).
\end{proof}

In the same setting as Proposition~\ref{ringclass}, we can consider the image of $\mathcal{Z}_{\mu_3}^{(3)}$ from \ref{eqn:pre-shapiro} under the composition $e\circ w$. This results in the class 
\[
\mathcal{Z}_{\mu_3}^{(4)}\in  H^1\bigl(K_0,T_f^{\vee}\otimes_{\mathcal{O}}\mathcal{O}_{\tilde{\psi}_1^{-1}\tilde{\psi}_2^{-1}}[H[\mu_3]^{(p)}](-1)\bigr).
\]
By Shapiro's lemma, its image under the isomorphism 
\[H^1\bigl(K_0,T_f^{\vee}\otimes_{\mathcal{O}}\mathcal{O}_{\tilde{\psi}_1^{-1}\tilde{\psi}_2^{-1}}[H[\mu_3]^{(p)}](-1)\bigr)\simeq H^1\bigl(K_0[\mu_3],T_f^{\vee}(\tilde{\psi}_{1}^{-1}\tilde{\psi}_{2}^{-1})(-1)\bigr)
\]
then defines
\[
\mathcal{Z}_{\mu_3}^{(5)}\in H^1\bigl(K_0[\mu_3],T_f^{\vee}(\psi_{1}^{-1}\psi_{2}^{-1})(-1)\bigr).
\]

The next lemma is in the same vein with \cite[Lem.\,2.1.3]{CD}:
\begin{lem}\label{keydiagram}
Let $\la_3$ be a split prime in $K_3$ of norm $\ell$ coprime to $mp$, where $\ell$ splits in $K_0$. The following diagram commutes:
\[
\xymatrix{
 {\rm Ind}^{\Q}_{K_1}\mathcal{O}_{\psi_{1}^{-1}}[^{1}H_{\mu_1\la_1}^{(p)}]\otimes_{\mathcal{O}}{\rm Ind}^{\Q}_{K_2}\mathcal{O}_{\psi_{2}^{-1}}[^{2}H_{\mu_2\la_2}^{(p)}] \ar[d]^{\mathrm{Norm}^{\mu_1\la_1}_{\mu_1}\otimes\mathrm{Norm}^{\mu_2\la_2}_{\mu_2}} \ar[r] &{\rm Ind}^{K_3}_{K_0}\mathcal{O}_{\tilde{\psi}_{1}^{-1}\tilde{\psi}_{2}^{-1}}[H[\mu_3\la_3]^{(p)}] \ar[d]^{\mathrm{Norm}^{\mu_3\la_3}_{\mu_3}} \\
{\rm Ind}^{\Q}_{K_1}\mathcal{O}_{\psi_{1}^{-1}}[^{1}H_{\mu_1}^{(p)}]\otimes_{\mathcal{O}}{\rm Ind}^{\Q}_{K_2}\mathcal{O}_{\psi_{2}^{-1}}[^{2}H_{\mu_2}^{(p)}] \ar[r] & {\rm Ind}^{K_3}_{K_0}\mathcal{O}_{\tilde{\psi}_{1}^{-1}\tilde{\psi}_{2}^{-1}}[H[\mu_3]^{(p)}],}
\]
where the norm maps are the natural ones, and where the horizonal arrows are given by the composition $e\circ w$ in  (\ref{ringclass}).
\end{lem}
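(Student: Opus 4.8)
The plan is to reduce the commutativity of this diagram to the compatibility already established in Theorem~\ref{norm1} (Corollary 5.2.6 of \cite{LLZ-K}), together with the explicit description of the maps $e\circ w$ coming from Proposition~\ref{ringclass}. First I would observe that the left vertical arrow factors as $\mathrm{Norm}^{\mu_1\la_1}_{\mu_1}\otimes\mathrm{Norm}^{\mu_2\la_2}_{\mu_2}$, which is precisely the tensor product of the two norm maps on induced representations that appear in the commuting square of Theorem~\ref{norm1}, applied separately to $K_1$ with character $\psi_1$ and to $K_2$ with character $\psi_2$. Since the bottom and top horizontal arrows are both instances of the same recipe $e\circ w$ (only with $\mu_3$ replaced by $\mu_3\la_3$ on top), it suffices to check that $e\circ w$ intertwines the ``product of norms'' on the source with the single norm $\mathrm{Norm}^{\mu_3\la_3}_{\mu_3}$ on the target.

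The second step is to unwind everything at the level of the underlying finite abelian groups, using the identifications of Proposition~\ref{ringclass}. Under the isomorphisms ${^{1}H}_{\mu_1\la_1}^{(p)}\times{^{2}H}_{\mu_2\la_2}^{(p)}\xrightarrow{\simeq}(\cO_{K_0}/\mu_3\la_3\cO_{K_0})^{\times,(p)}$ and $\twoheadrightarrow H[\mu_3\la_3]^{(p)}$, and similarly without $\la_3$, the maps $e\circ w$ become the canonical projections, and both vertical norm maps become induced by the natural surjections of the relevant abelian groups (projection $H[\mu_3\la_3]^{(p)}\twoheadrightarrow H[\mu_3]^{(p)}$ on the target; the product of the two projections on the source). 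So the diagram of groups commutes for the trivial reason that all four maps are quotient maps fitting into one commutative square of abelian groups — this is where the hypothesis that $\ell$ splits in $K_0$ and is coprime to $mp$ is used, so that the prime decompositions $\la_1=\cL_1\cL_4$, etc., of \S\ref{subsec:prime-decomp} are available and the various conductors are coprime. Then one upgrades this to a statement about induced $\cO$-modules with the characters $\psi_1^{-1},\psi_2^{-1},\tilde\psi_1^{-1}\tilde\psi_2^{-1}$ twisted in, using that $\tilde\psi_i=\psi_i\circ\mathbb{N}_{K_0/K_i}$ makes the character on the $K_0$-side compatible with the characters on the $K_1$- and $K_2$-sides under the norm maps — exactly the compatibility recorded in Theorem~\ref{norm1}.

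The main obstacle, I expect, will be the bookkeeping at the level of Frobenius elements: one must verify that under $e\circ w$ the element $[\la_1]\times[\bar\la_2]$ upstairs (associated to the prime $\la_3\mid\la_1\la_2$ being adjoined) maps to $\mathrm{Frob}_{\cL_4/\la_3}$ in $H[\mu_3\la_3]^{(p)}$ in a way that is genuinely compatible with the norm down to $H[\mu_3]^{(p)}$, i.e. that the ``Euler factor'' contributions of the degeneracy maps $\mathrm{pr}_{\ell*}$ hidden inside $\mathcal{N}_{\fkn}^{\fkm}$ (see the three cases recalled before Theorem~\ref{norm1}) match on both sides. Concretely one has to trace the split-prime case of $\mathcal{N}_{\fkn}^{\fkm}$ through the isomorphism $\nu_\fkn$ and check it corresponds to the natural norm $\mathrm{Norm}_\fkn^\fkm$ of induced representations — but this is exactly the content of the commuting square in Theorem~\ref{norm1}, so the work is in correctly identifying the $K_0$-side norm with the product of the $K_1$- and $K_2$-side norms under the decomposition of \S\ref{subsec:prime-decomp}, and nothing deeper is needed. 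I would therefore present the proof as: (i) pass to abelian groups via Proposition~\ref{ringclass}; (ii) note the square of abelian groups commutes; (iii) twist in the characters and invoke Theorem~\ref{norm1} on each of $K_1$ and $K_2$; (iv) conclude by functoriality of $\mathrm{Ind}$.
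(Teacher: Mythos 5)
The paper does not actually supply a proof of Lemma~\ref{keydiagram}: it is asserted as being ``in the same vein with'' \cite[Lem.\,2.1.3]{CD} and left as a formal verification. So there is no paper proof to compare with, but your plan can still be assessed on its merits, and its core is correct: the whole diagram is induced, by applying $\mathrm{Ind}$ and twisting by characters, from a square of \emph{finite abelian groups} in which all four arrows are the natural surjections (the vertical ones are projections ${}^iH_{\mu_i\la_i}^{(p)}\twoheadrightarrow{}^iH_{\mu_i}^{(p)}$ and $H[\mu_3\la_3]^{(p)}\twoheadrightarrow H[\mu_3]^{(p)}$ --- note these are indeed projections rather than transfers, since on induced modules it is the projection that realises corestriction); that square commutes because every arrow is a quotient of $(\cO_{K_0}/\mu_3\la_3\cO_{K_0})^{\times,(p)}$ compatibly with the identifications of Proposition~\ref{ringclass}; and the character twists match because $\tilde\psi_i=\psi_i\circ\mathbb{N}_{K_0/K_i}$. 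That is the whole proof, and your steps (i), (ii), (iv) capture it.

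However, your repeated invocations of Theorem~\ref{norm1} are misplaced: that theorem is about the identification of the Hecke-module side $H^1_{\et}(Y_1(\cdot))\otimes_{\mathbb{T}'}\mathcal{O}[H^{(p)}]$ with the induced representation, and about the compatibility of the \emph{geometric} norm maps $\mathcal{N}_{\fkn}^{\fkm}$ (built from degeneracy maps $\mathrm{pr}_{1*},\mathrm{pr}_{\ell*}$) with the natural norms on the Galois side. None of that appears in Lemma~\ref{keydiagram}, which lives entirely on the Galois side, so Theorem~\ref{norm1} is not a logical ingredient here. Likewise, the ``main obstacle'' you anticipate --- tracking the Euler-factor contributions of the $\mathrm{pr}_{\ell*}$ through $\nu_\fkn$ --- belongs to the proof of Proposition~\ref{wrongnorm}, where Theorem~\ref{norm1} and Lemma~\ref{keydiagram} are combined; it is not part of proving the present lemma. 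Trimming those references would leave you with a clean and correct proof.
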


Using this Lemma \ref{keydiagram}, one can show the following Proposition~\ref{wrongnorm}. Similar to \cite[Prop ${2.2.1}$]{CD}, this is the key result for the construction of our anticyclotomic Euler system for $T_f^\vee(\tilde{\psi}_1^{-1}\tilde{\psi}_2^{-1})(-1)$ over the biquadratic field $K_0$.

\begin{prop}\label{wrongnorm}
The family $\{\mathcal{Z}_{\mu_3}^{(5)}\}$ satisfies the following norm relation:
\begin{align*}
{\rm Norm}_{K_0[\mu_3]}^{K_0[\mu_3\la_3]}(\mathcal{Z}_{\mu_3\la_3}^{(5)})=(\ell-1)\bigg(&a_\ell(f)-\frac{\psi_1(\la_1)\psi_2(\bar{\la}_2)}{\ell}([\la_1]\times [\bar{\la}_2])-\frac{\psi_1(\bar{\la}_1)\psi_2(\la_2)}{\ell}([\bar{\la}_1]\times [\la_2])\\
   &+(1-\ell)\frac{\psi_1(\la_1)\psi_2(\la_2)}{\ell^2}([\la_1]\times[\la_2])\bigg)
    (\mathcal{Z}_{\mu_3}^{(5)}).
\end{align*}
\end{prop}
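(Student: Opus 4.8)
The plan is to deduce Proposition~\ref{wrongn",}"wrongnorm} by combining the norm-compatibility of the CM Hecke modules (Theorem~\ref{norm1}), the explicit formula for the norm map $\mathcal{N}_{\fkn}^{\fkm}$ recalled in Section~\ref{LLZrecall}, the diagram of Lemma~\ref{keydiagram}, and a direct unwinding of how the classes $\mathcal{Z}_{\mu_3}^{(j)}$ are built. Concretely, I would start from the tame norm relation for the diagonal classes $\mathcal{Z}_m^{(1)}=\tilde{\kappa}_m^{(3)}$ in the triple product of modular curves, which by \cite[Sec.\,2.1]{CD} (and ultimately \cite[Sec.\,3]{BSV}) expresses $\mathrm{Norm}(\mathcal{Z}_{m\ell}^{(1)})$ in terms of $\mathcal{Z}_m^{(1)}$ twisted by the degeneracy-map operators at $\ell$ acting on each of the three factors. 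The first factor contributes $a_\ell(f)$ (after projecting to $T_f^\vee$ via $\breve{f}$, using that $f$ has trivial nebentypus so the $\langle\ell\rangle$-term is $1$ and the relevant operator is $U_\ell + \langle\ell\rangle \ell^{k-1}\cdot(\text{something})$ — here weight $2$ so it simplifies), while the $\ell-1$ factor is the Euler-factor-at-$\ell$ contribution coming from the $(-1)$-Tate twist, i.e. from $H^0$ of the cyclic quotient group, exactly as in \cite[Prop.\,2.2.1]{CD}.

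The second step is to transport this through the isomorphisms $\nu_{\fkf_i\mu_i}$ of Theorem~\ref{norm1}: applying $\nu$ converts the degeneracy-map operator on $H^1_{\et}(Y_1(N_{\psi_i}m\ell),\Z_p(1))\otimes_{\mathbb{T}'}\cO[{}^{i}H^{(p)}]$ into the natural norm map $\mathrm{Norm}_{\fkf_i\mu_i}^{\fkf_i\mu_i\la_i}$ on ${\rm Ind}^{\Q}_{K_i(\fkf_i\mu_i\la_i)}\cO(\psi_i^{-1})$ up to the explicit correction term dictated by the split case of $\mathcal{N}_{\fkn}^{\fkm}$, namely $1\otimes\mathrm{pr}_{1*}-\tfrac{\psi_i(\la_i)[\la_i]}{\ell}\otimes\mathrm{pr}_{\ell*}$ for the relevant prime $\la_i\mid\la_3$. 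Since $\ell$ splits in both $K_1$ and $K_2$ and $\la_3\mid\la_1\la_2$, in each of the two CM factors one picks up a two-term operator, and forming the tensor product of the two CM factors (together with the $a_\ell(f)$ from the $f$-factor) produces precisely the four terms displayed: $a_\ell(f)$, the two ``mixed'' terms with coefficients $\psi_1(\la_1)\psi_2(\bar\la_2)/\ell$ and $\psi_1(\bar\la_1)\psi_2(\la_2)/\ell$, and the ``doubly corrected'' term with coefficient $(1-\ell)\psi_1(\la_1)\psi_2(\la_2)/\ell^2$. The group-element labels $[\la_1]\times[\bar\la_2]$ etc. are exactly the images under $1\otimes\mathrm{pr}$ of the distinguished generator, matching the bookkeeping of Section~\ref{subsec:prime-decomp}; the apparent asymmetry (why $[\la_1]\times[\bar\la_2]$ rather than $[\la_1]\times[\la_2]$ in the linear terms) is forced by which prime $\cL_j$ of $K_0$ lies above which pair, and I would track it via the decomposition $\la_1=\cL_1\cL_4$, $\bar\la_2=\cL_2\cL_4$, so that $\cL_4\mid\la_3$.

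The third step is purely formal: apply Shapiro's lemma and then the projection $e\circ w$ of Proposition~\ref{ringclass}, using Lemma~\ref{keydiagram} to see that the composite of ``$\nu$-transported norm on the CM factors'' with ``$e\circ w$'' equals ``$e\circ w$'' followed by $\mathrm{Norm}_{K_0[\mu_3]}^{K_0[\mu_3\la_3]}$ on $H^1(K_0[-],T_f^\vee(\tilde\psi_1^{-1}\tilde\psi_2^{-1})(-1))$. Pushing the four-term identity through this diagram, and observing that the images of $[\la_1]\times[\bar\la_2]$ and $[\bar\la_1]\times[\la_2]$ in the ring class group $H[\mu_3]^{(p)}$ are $\mathrm{Frob}_{\cL_4/\la_3}$ and its conjugate (or equal, depending on the chosen normalization), while $[\la_1]\times[\la_2]$ maps into the diagonal-torus piece and survives as written, yields the stated formula for $\{\mathcal{Z}_{\mu_3}^{(5)}\}$.

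The main obstacle, I expect, is the second step — correctly matching the explicit three-variable degeneracy-map norm relation for the diagonal classes (where each factor comes with its own operator and the tensor structure must be respected) with the two independent CM norm maps of Theorem~\ref{norm1}, and in particular getting all coefficients and group-element labels right, including the sign and the factor $(1-\ell)$ in the last term, which arises from cross-multiplying the two $-\tfrac{\psi_i(\la_i)[\la_i]}{\ell}\otimes\mathrm{pr}_{\ell*}$ corrections and then reconciling $\mathrm{pr}_{\ell*}\circ\mathrm{pr}_{\ell*}$-type compositions with the single-step norm via the relation $\mathrm{pr}_{1*}\mathrm{pr}_\ell^* = \ell + 1$ (or its analogue) in weight $2$. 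All of this is a careful but essentially mechanical elaboration of \cite[Prop.\,2.2.1]{CD}, now carried out with two distinct imaginary quadratic fields in place of one; the genuinely new input is only the combinatorics of the prime decomposition in the biquadratic field $K_0$, which Section~\ref{subsec:prime-decomp} and Proposition~\ref{ringclass} have already set up.
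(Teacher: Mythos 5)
You have the right architecture and essentially the same ingredients as the paper's proof — \cite[Prop.\,2.2.1]{CD}, Theorem~\ref{norm1}, the explicit two-term formula for $\mathcal{N}_{\fkn}^{\fkm}$, Lemma~\ref{keydiagram}, and the prime-decomposition bookkeeping of Section~\ref{subsec:prime-decomp}. However, your accounting of where the four displayed terms come from has a concrete gap. Tensoring the two two-term CM norm maps does yield four terms, but after applying the diagonal-class norm relation of \cite[Prop.\,2.2.1]{CD} these produce the intermediate expression
\begin{equation*}
(\ell-1)\biggl((T_\ell,1,1)-\frac{\psi_1(\la_1)[\la_1]}{\ell}(1,1,T_\ell')-\frac{\psi_2(\la_2)[\la_2]}{\ell}(1,T_\ell',1)+\frac{\psi_1(\la_1)\psi_2(\la_2)}{\ell^2}(\ell+1)([\la_1]\times[\la_2])\biggr),
\end{equation*}
which still contains Hecke operators $T_\ell'$, not the $[\bar\la_i]$ group-ring labels of the final formula. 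The step you are missing is expanding $T_\ell'$ on each CM factor through $\phi_\fkn$ of Proposition~\ref{phi_n}, namely $T_\ell'\mapsto\psi_i(\la_i)[\la_i]+\psi_i(\bar\la_i)[\bar\la_i]$ since $\ell$ splits in $K_i$: this is what actually produces the labels $[\la_1]\times[\bar\la_2]$ and $[\bar\la_1]\times[\la_2]$, not the ``image of the distinguished generator under $1\otimes\mathrm{pr}$'' as you suggest. It also produces two \emph{additional} $[\la_1]\times[\la_2]$ contributions, one from each mixed term, with coefficient $-\frac{\psi_1(\la_1)\psi_2(\la_2)}{\ell}$ each. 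Collecting all three $[\la_1]\times[\la_2]$ contributions gives $\bigl(-\tfrac{1}{\ell}-\tfrac{1}{\ell}+\tfrac{\ell+1}{\ell^2}\bigr)\psi_1(\la_1)\psi_2(\la_2)=\tfrac{1-\ell}{\ell^2}\psi_1(\la_1)\psi_2(\la_2)$. Your attribution of the $(1-\ell)$ to ``cross-multiplying the two corrections and reconciling $\mathrm{pr}_{\ell*}\circ\mathrm{pr}_{\ell*}$ via $\ell+1$'' cannot be right: that cross term alone contributes $(\ell+1)/\ell^2$, and the flip from $+(\ell+1)$ to $-(\ell-1)$ depends entirely on the overlapping $[\la_1]\times[\la_2]$ pieces coming out of the $T_\ell'$-expansions, a cancellation your proposal never performs.
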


\begin{proof}
As in the proof of \cite[Prop ${2.2.1}$]{CD}, one has
\begin{align*}
&(1\otimes\mathcal{N}_{\mu_1}^{\mu_1\la_1}\otimes\mathcal{N}_{\mu_2}^{\mu_2\la_2})(\mathcal{Z}^{(5)}_{\mu_3\la_3})\\
&=(\ell-1)\bigg((T_\ell,1,1)-\frac{\psi_1(\la_1)[\la_1]}{\ell}(1,1,T_\ell')-\frac{\psi_2(\la_2)[\la_2]}{\ell}(1,T_\ell',1)+\frac{\psi_1(\la_1)\psi_2(\la_2)}{\ell^2}([\la_1]\times[\la_2])(\ell+1)\bigg)(\mathcal{Z}^{(5)}_{\mu_3}) \\
&=(\ell-1)\bigg(a_\ell(f)-\frac{\psi_1(\la_1)[\la_1]}{\ell}(\psi_2(\la_2)[\la_2]+\psi_2(\bar{\la}_2)[\bar{\la}_2])-(\psi_1(\la_1)[\la_1]+\psi_1(\bar{\la}_1)[\bar{\la}_1])\frac{\psi_2(\la_2)[\la_2]}{\ell}\\
&\quad+\frac{\psi_1(\la_1)\psi_2(\la_2)}{\ell^2}([\la_1]\times[\la_2])(\ell+1)\bigg)(\mathcal{Z}^{(5)}_{\mu_3})\\
&=(\ell-1)\bigg(a_\ell(f)-\frac{\psi_1(\la_1)\psi_2(\bar{\la}_2)}{\ell}([\la_1]\times [\bar{\la}_2])-\frac{\psi_1(\bar{\la}_1)\psi_2(\la_2)}{\ell}([\bar{\la}_1]\times [\la_2])\\
&\quad+(1-\ell)\frac{\psi_1(\la_1)\psi_2(\la_2)}{\ell^2}([\la_1]\times[\la_2])\bigg)(\mathcal{Z}^{(5)}_{\mu_3}).
\end{align*} 
This implies the result via combining Theorem \ref{norm1} and Lemma \ref{keydiagram}.
\end{proof}

Following \cite[$\S{2.2}$]{CD} verbatim, which borrows ideas from \cite[$\S{1.4}$]{DR2}, we can strip out the $(\ell-1)$ factor by quotienting out the diamond operators action and obtain modified classes
\[
\mathcal{Z}_{\mu_3}^{(6)}\in H^1\bigl(K_0[\mu_3],T_f^{\vee}(\tilde{\psi}_{1}^{-1}\tilde{\psi}_{2}^{-1})(-1)\bigr).
\]
Then the term in the right-hand side of Proposition~\ref{wrongnorm} can be massaged to agree with the local Euler factor at $\cL_2$ of the Galois representation $[T_f^\vee(\tilde{\psi}_1^{-1}\tilde{\psi}_2^{-1})(-1)]^\vee(1)=T_f(\tilde{\psi}_1\tilde{\psi}_2)(2)$, giving the correct norm relations:

\begin{thm}\label{maintheorem1}
Suppose $p\nmid 6h_{K_0}$ and $f$ is non-Eisenstein modulo $\mathfrak{P}$. Let $\mu_3\in\mathcal{N}$ be a squarefree ideal of $\cO_{K_3}$. Then there exists a collection of cohomology classes
\[
\mathcal{Z}_{\mu_3}\in H^1\bigl(K_0[\mu_3],T_f^{\vee}(\tilde{\psi}_{1}^{-1}\tilde{\psi}_{2}^{-1})(-1)\bigr)
\]
such that for every split prime $\la_3\in\mathcal{L}$ of $\cO_{K_3}$ of norm $\ell$ with $(\ell,m)=1$, we have the norm relation
\begin{equation*}
    \mathrm{Norm}_{K_0[\mu_3]}^{K_0[\mu_3\la_3]}(\mathcal{Z}_{\mu_3\la_3})=P_{\cL_4}(\mathrm{Frob}_{\cL_4/\la_3})(\mathcal{Z}_{\mu_3}),
\end{equation*}
where $P_{\cL_4}(X)=\det(1-X\cdot \mathrm{Frob}_{\cL_4/\la_3}\,|\,T_f(\tilde{\psi}_1\tilde{\psi}_2)(2))$.
\end{thm}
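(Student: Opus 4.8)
The plan is to deduce Theorem~\ref{maintheorem1} directly from Proposition~\ref{wrongnorm} together with Proposition~\ref{ringclass}, by first normalizing away the spurious $(\ell-1)$ factor and then recognizing the resulting operator as the characteristic polynomial of Frobenius. Concretely, I would first define $\mathcal{Z}_{\mu_3}$ to be the class $\mathcal{Z}_{\mu_3}^{(6)}$ obtained from $\mathcal{Z}_{\mu_3}^{(5)}$ by applying the same procedure as in \cite[\S2.2]{CD} (following \cite[\S1.4]{DR2}): one passes to the quotient by the augmentation-type action of the relevant diamond operators on $\mathcal{O}[H[\mu_3]^{(p)}]$, which is permissible since $f$ is non-Eisenstein modulo $\mathfrak{P}$ (this is exactly where that hypothesis, and $p\nmid 6h_{K_0}$ to guarantee the structure of the ray class groups in Proposition~\ref{ringclass}, are used). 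After this normalization the norm relation from Proposition~\ref{wrongnorm} becomes
\[
\mathrm{Norm}_{K_0[\mu_3]}^{K_0[\mu_3\la_3]}(\mathcal{Z}_{\mu_3\la_3})=\Bigl(a_\ell(f)-\tfrac{\psi_1(\la_1)\psi_2(\bar{\la}_2)}{\ell}([\la_1]\times[\bar{\la}_2])-\tfrac{\psi_1(\bar{\la}_1)\psi_2(\la_2)}{\ell}([\bar{\la}_1]\times[\la_2])+(1-\ell)\tfrac{\psi_1(\la_1)\psi_2(\la_2)}{\ell^2}([\la_1]\times[\la_2])\Bigr)(\mathcal{Z}_{\mu_3}).
\]

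Next I would identify this operator with $P_{\cL_4}(\mathrm{Frob}_{\cL_4/\la_3})$. Since $\ell$ splits completely in $K_0$ and $\cL_4$ has residue degree $1$, the local Euler factor is
\[
P_{\cL_4}(X)=\det\bigl(1-X\cdot\mathrm{Frob}_{\cL_4/\la_3}\mid T_f(\tilde{\psi}_1\tilde{\psi}_2)(2)\bigr),
\]
a polynomial of degree $2$ whose coefficients are read off from the eigenvalues of $\mathrm{Frob}_{\cL_4}$ on $V_f$ (namely the roots of $X^2-a_\ell(f)X+\ell^{k-1}\chi_f(\ell)$, here with $k=2$ and $\chi_f=\epsilon_{K_1}\epsilon_{K_2}$) twisted by the values $\tilde{\psi}_1(\cL_4)\tilde{\psi}_2(\cL_4)$ and the cyclotomic twist by $2$. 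Using Proposition~\ref{ringclass}, which tells us that under $e\circ w$ one has $[\la_1]\times[\bar{\la}_2]\mapsto \mathrm{Frob}_{\cL_4/\la_3}$, and the analogous assignments for the other products of primes above $\ell$ (obtained by applying the Galois symmetries $\tau_i$, so that e.g. $[\bar\la_1]\times[\la_2]$ and $[\la_1]\times[\la_2]$ map to the appropriate Frobenius classes and their combination reconstitutes the full Euler factor), together with the compatibility $\tilde\psi_i\circ\mathrm{rec}=\psi_i\circ N_{K_0/K_i}$ relating $\psi_i(\la_i)$ to $\tilde\psi_i(\cL_4)$, the displayed operator matches $P_{\cL_4}(\mathrm{Frob}_{\cL_4/\la_3})$ term by term. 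This is a finite bookkeeping computation analogous to the one in \cite[\S2.2]{CD}; I would write out the matching of the four monomials ($1$, $a_\ell$, the mixed term, and $\ell^{k-1}$) explicitly.

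The main obstacle, and the step deserving the most care, is the second one: correctly tracking all the twists and Galois conjugations so that the four terms of the normalized norm relation assemble into $P_{\cL_4}$ rather than, say, $P_{\cL_2}$ or the Euler factor of the dual. This requires being scrupulous about (i) whether $\mathrm{Frob}$ means arithmetic or geometric Frobenius, (ii) the precise relation between $\psi_i(\la_i)$ and the value of the Galois character $\tilde\psi_i$ at $\cL_4$ via the $p$-adic avatar conventions of \S\ref{theta}, and (iii) the Tate twist by $2$ and the dualization $[T_f^\vee(\tilde\psi_1^{-1}\tilde\psi_2^{-1})(-1)]^\vee(1)=T_f(\tilde\psi_1\tilde\psi_2)(2)$. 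Once these conventions are pinned down—exactly as in the imaginary quadratic case treated in \cite{CD}, the only new feature being that $\cL_4$ now sits above both $\la_1\subset\cO_{K_1}$ and $\bar\la_2\subset\cO_{K_2}$ (a fact already recorded in the proof of Proposition~\ref{ringclass})—the identification is forced, and the theorem follows.
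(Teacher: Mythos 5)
Your first step—defining $\mathcal{Z}_{\mu_3}^{(6)}$ by stripping the $(\ell-1)$ factor via the diamond-operator quotient as in \cite[\S 2.2]{CD}, and recording the key fact from Proposition~\ref{ringclass} that $[\la_1]\times[\bar\la_2]\mapsto\mathrm{Frob}_{\cL_4/\la_3}$—agrees with the paper. But your second step is wrong, and the gap is not cosmetic.

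You claim that the normalized operator
\[
Q := a_\ell(f)-\tfrac{\psi_1(\la_1)\psi_2(\bar{\la}_2)}{\ell}([\la_1]\times[\bar{\la}_2])-\tfrac{\psi_1(\bar{\la}_1)\psi_2(\la_2)}{\ell}([\bar{\la}_1]\times[\la_2])+(1-\ell)\tfrac{\psi_1(\la_1)\psi_2(\la_2)}{\ell^2}([\la_1]\times[\la_2])
\]
``matches $P_{\cL_4}(\mathrm{Frob}_{\cL_4/\la_3})$ term by term.'' It does not, and it cannot: $Q$ is a linear combination of several distinct group-ring elements (involving $\mathrm{Frob}_{\cL_4}$, $\mathrm{Frob}_{\cL_3}$, and the image of $[\la_1]\times[\la_2]$), while $P_{\cL_4}(\mathrm{Frob}_{\cL_4/\la_3})$ is a quadratic polynomial in a single element $\mathrm{Frob}_{\cL_4/\la_3}$. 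No amount of bookkeeping of the monomials will make these equal on the nose; the equality fails already as operators on $H^1(K_0[\mu_3],T_f^{\vee}(\tilde\psi_1^{-1}\tilde\psi_2^{-1})(-1))$.

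What the paper (and \cite[Thm.~2.2.7]{CD}) actually does is different in two essential ways. First, one multiplies $Q$ by the \emph{unit} $-\psi_1(\la_1)\psi_2(\bar\la_2)([\la_1]\times[\bar\la_2])$ in $\cO[H[\mu_3]^{(p)}]$. After this multiplication, using $\psi_1\psi_2((\ell))=\ell^2$, $\psi_1(\la_1)\psi_2(\bar\la_2)=\tilde\psi_1(\cL_4)\tilde\psi_2(\cL_4)$, and the fact that $[\ell]\times[\ell]$ is trivial in the ring class group, one obtains an expression that is congruent to $P_{\cL_4}(\mathrm{Frob}_{\cL_4/\la_3})$ only \emph{modulo $(\ell-1)$}, not equal to it (this is exactly why the extra factor $\ell$ in the coefficient of $\mathrm{Frob}^2$ and the residual $\ell$ in the cross term can be absorbed). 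Second, to convert a family of classes satisfying a norm relation whose Euler factor agrees with the desired $P_{\cL_4}$ only up to units and modulo $(\ell-1)$ into the genuine Euler system $\{\mathcal{Z}_{\mu_3}\}$ of the theorem statement, one invokes Rubin's Lemmas 9.6.1 and 9.6.3 from \cite{Rubin-ES}. This replacement step means the classes $\mathcal{Z}_{\mu_3}$ of the theorem are \emph{not} literally equal to $\mathcal{Z}_{\mu_3}^{(6)}$, contrary to what you propose. Without the unit multiplication, the congruence modulo $(\ell-1)$, and the Rubin lemmas, the argument does not close.
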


\begin{proof} The proof is parallel to the one of \cite[Thm.\,2.2.7]{CD}.
First notes that $[\la_1]\times[\bar{\la}_2]$ corresponds to ${\rm Frob}_{\cL_4/\la_3}\in H[\mu_3]^{(p)}$ under the map $e\circ w$ of Proposition \ref{ringclass}. One then multiplies the class $\mathcal{Z}_{\mu_3\la_3}^{(6)}$ with
 $-\psi_1(\la_1)\psi_2(\bar{\la}_2)([\la_1]\times [\bar{\la}_2])$. From $\psi_1\psi_2((\ell))=\chi_{\psi_1}\chi_{\psi_2}(\ell)\ell^2=\epsilon_{K_1}(\ell)^{-1}\epsilon_{K_2}(\ell)^{-1}\ell^2=\ell^2$ since $\ell$ splits in $K_0$, $\psi_1(\la_1)\psi_2(\bar{\la}_2)=\tilde{\psi}_1(\cL_4)\tilde{\psi}_2(\cL_4)$, and the fact that $[\ell]\times[\ell]$ maps to the identity element inside the ring class group together with Lemma 9.6.1 and 9.6.3 in \cite{Rubin-ES}, the result follows from the explicit formula of Proposition~\ref{wrongnorm}.
\end{proof}

\subsection{Construction for general weights and wild norm relations}\label{wildnormsetup}

We now extend the above construction to other weights $(k,k_1,k_2)\in\mathbb{Z}_{ \ge 1}^3$ following \cite[Sec.\,2.3]{CD}. Then we show that the constructed cohomology classes also satisfy the wild norm relations for the anticyclotomic $\Z_p^2$-extension of $K_0$. 

First, we assume that $p\nmid 6h_{K_0}$. Assume further that $p \text{ splits in } K_0$ i.e.
\begin{equation*}
    (p)=\cP_1\cP_2\cP_3\cP_4 \text{ in } K_0,
\end{equation*}
and $\cP_2=\tau_3\cP_1,\;\cP_3=\tau_2\cP_1,\;\cP_4=\tau_1\cP_1,$. Hence
\begin{equation*}
\textrm{$(p)=\pp_1\ppbar_1$  in $K_1$,} \textrm{ $(p)=\pp_2\ppbar_2$ in $K_2$,} 
\end{equation*}
with $\cP_1$ the prime of $K_0$ above $p$ induced by our fixed embedding $i_p:\overline{\Q}\hookrightarrow\overline{\Q}_p$, and $\cP_1$ lies above $\pp_i$, the prime of $K_i$ for $i\in\{1,2\}$. Note that the numbering here is parallel to our convention in Section \ref{subsec:prime-decomp}.

Let $\Gamma_{\pp_i}^{K_i}$ be the Galois group of the unique $\Z_p$-extension of $K_i$ unramified outside $\pp_i$. There exists a unique Hecke character $\psi_{0,i}$ of $K_i$ of infinity type $(-1,0)$ and conductor $\pp_i$ such that its $p$-adic avatar factors through $\Gamma_{\pp_i}^{K_i}$. The character $\psi_i$ fixed at the beginning of this section can be decomposed as 
\[
\psi_i=\ch_i {\psi_{0,i}^{k_i-1}},
\]
where $\ch_i$ is a ray class character of $K$ of conductor dividing $\fkf_i\pp_i$. Noting that $\Gamma_{\pp_i}^{K_i}$ is a quotient of $^iH_{\fkf_i\pp_i^\infty}$ allows us to view $\psi_{0,i}$ and $\ch_i$ as characters of $^iH_{\fkf_i\pp_i^\infty}$. The formal $q$-expansion
\[
\boldsymbol{\theta}_{\ch_i}(q)=\sum_{(\mathfrak{a},\fkf_i\pp_i)=1}\ch_i\psi_{0,i}(\mathfrak{a})[\mathfrak{a}]q^{N_{K/\Q}(\mathfrak{a})}\in\Lambda_{\pp_i}[\![q]\!],
\]
where $\Lambda_{\pp_i}=\cO[\![\Gamma_{\pp_i}^{K_i}]\!]\simeq\cO\dBr{\Gamma}$ and $\Gamma=1+p\Z_p$, is the Hida family passing through $\theta_{\psi_i}$ (the specialisation of $\boldsymbol{\theta}_{\ch_i}$ at weight $k_i$ and trivial character recovers the ordinary $p$-stabilization of $\theta_{\psi_i}$). Here we identify $\Gamma_{\pp_i}^{K_i}$ with $1+p\Z_p$ via the (geometrically normalized) local Artin map.

Let $\bff$ be the Hida family associated to $f$. Let $\bfg=\boldsymbol{\theta}_{\ch_1}$, $\bfh=\boldsymbol{\theta}_{\xi_2}$
be the CM Hida families associated to $\psi_1$ and $\psi_2$, respectively. Denote by $\kappa_f$,  $\kappa_g$, and $\kappa_h$ the Dirichlet characters modulo $p$ giving the $p$-part of the tame characters of $\bff$, $\bfg$, and $\bfh$, respectively.

Under the assumption that
${\ch_i\psi_{0,i}\not\equiv\omega\;\;({\rm mod}\,{\mathfrak{P}})}$
for $i\in\{1,2\}$, following equation $(2.17)$ from \cite[$\S 2.3$]{CD} and its notation, we have the $G_\Q$-equivariant maps 
\begin{equation}\label{eq:proj-gh}
\begin{aligned}
H^1(\Gamma(m,p),\mathcal{D}'_{\kappa_1})\otimes\cO[\![^1H_{\fkf_1\mu_1\pp_1^\infty}^{(p)}]\!]&\rightarrow{\rm Ind}_{K_1}^\Q\cO_{(\ch_1\psi_{0,1})^{-1}}[^1H_{\mu_1}^{(p)}][\![\Gamma_{\pp_1}^{K_1}]\!],\\
H^1(\Gamma(m,p),\mathcal{D}'_{\kappa_2})\otimes\cO[\![^2H_{\fkf_2\mu_2\pp_2^\infty}^{(p)}]\!]&\rightarrow{\rm Ind}_{K_2}^\Q\cO_{(\ch_2\psi_{0,2})^{-1}}[^2H_{\mu_2}^{(p)}][\![\Gamma_{\pp_2}^{K_2}]\!],
\end{aligned}
\end{equation}
where $\Gamma(m,p)=\Gamma_1(Nm)\cap\Gamma_0(p)$ is a congruence subgroup.
Focusing on the class $\boldsymbol{\kappa}_m^{(2)}$ in equation $(2.15)$ of op. cit., we first tensor it with $\cO[^1H_{\fkf_1\mu_1\pp_1^r}^{(p)}]$ and $\cO[^2H_{\fkf_2\mu_2\pp_2^r}^{(p)}]$, let $r\rightarrow\infty$, and then arrive at
\begin{align*}\label{eq:good-big-3}
\boldsymbol{\mathcal{Z}}^{(1)}_{\mu_3}\in H^1\bigl(\Q,H^1(\Gamma(1,p),\mathcal{D}'_{\kappa_f})&\hat\otimes_{\cO} (H^1(\Gamma(m,p),\mathcal{D}'_{\kappa_g})\otimes\cO[\![^1H_{\fkf_1\mu_1\pp_1^\infty}^{(p)}]\!])\\
&\quad\hat\otimes_{\cO[D_m]} (H^1(\Gamma(m,p),\mathcal{D}'_{\kappa_h})\otimes\cO[\![^2H_{\fkf_2\mu_2\pp_2^\infty}^{(p)}]\!])(2-\kappa_{f\bfg\bfh}^*)\bigr).
\end{align*}

Now choose a level-$N$ test vector for $f$, denoted as $\breve{f}$. It comes with a specialization map
\begin{equation}\label{eq:proj-f}
\pi_f:H^1(\Gamma(1,p),\mathcal{D}_{\kappa_f}')(1)\rightarrow T_f^\vee.
\end{equation}
Under the natural maps induced by (\ref{eq:proj-gh}) and (\ref{eq:proj-f}), the image of $\boldsymbol{\mathcal{Z}}^{(1)}_{\mu_3}$  is then
\[
\boldsymbol{\mathcal{Z}}^{(2)}_{\mu_3}\in H^1\bigl(\Q,T_f^\vee\otimes_{\cO}({\rm Ind}_{K_1}^\Q\cO_{(\ch_1\psi_{0,1})^{-1}}[^1H_{\mu_1}^{(p)}][\![\Gamma_{\pp_1}^{K_1}]\!])\hat\otimes_{\cO[D_m]}({\rm Ind}_{K_2}^\Q\cO_{(\ch_2\psi_{0,2})^{-1}}[^2H_{\mu_2}^{(p)}][\![\Gamma_{\pp_2}^{K_2}]\!])(-1-\kappa_{f\bfg\bfh}^*)\bigr).
\]

We first follow (\ref{eq:inf-res-decomp}) and then apply the diagonal map $e\circ w$ in Proposition \ref{ringclass}. This induces the  following class
\begin{equation}\label{eq:big-5}
\boldsymbol{\mathcal{Z}}^{(3)}_{\mu_3}\in 
H^1\bigl(K_3,T_f^\vee(1-k/2)\otimes_\cO{\rm Ind}_{K_0[\mu_3]}^{K_3}\Lambda_\cO(\tilde{\psi}_1^{-1}\tilde{\psi}_2^{-1}\tilde{\kappa}_{\rm ac,1}^{(k_1-2)/2}\tilde{\kappa}_{\rm ac,2}^{(k_2-2)/2}\boldsymbol{\kappa}_{\rm ac}^{-1})(1-(k_1+k_2)/2)\bigr).
\end{equation}
Here, for $i\in\{1,2\}$, we identify $\Ga_i^{-}=\text{Gal}(K_{i,\infty}^{-}/K_i)$ with the anti-diagonal in $(1+p\Z_p)\times (1+p\Z_p)\simeq \mathcal{O}_{K_i,\mathfrak{p}_i}^{(1)}\times \mathcal{O}_{K_i,\ppbar_i}^{(1)}$  via the geometric normalized Artin map, and define
\[\kappa_{ac,i}:\Ga_i^- \rightarrow \Z_p^{\times}, \quad \quad ((1+p)^{-1/2},(1+p)^{1/2})\mapsto (1+p)\]
(compare this with equation $(2.19)$ of \cite{CD}). 
Note that here we identify the anticyclotomic $\Z_p$ extension of $K_i$ with the unique $\Z_p$ extension of $K_i$ unramified outside $\mathfrak{p}_i$ (projection to the anticyltomic part introduces a square root ($\gamma\mapsto \gamma^{(1-c)/2}$), see also \cite[p.636]{Hida-Big-Galois}). 

We then identify the anticyclotomic $\Z_p^2$ extension $\Ga^{-}=\text{Gal}(K_{0,\infty}^{-}/K_0)$ of $K_0$ with $\Gamma_{\pp_1}^{K_1}\times \Gamma_{\pp_2}^{K_2}$ via the following diagram:
\begin{equation}
    \label{eq:G_K0^-identification}
\xymatrix{
(\mathcal{O}_{K_0,\cP_1}^{(1)}\times \mathcal{O}_{K_0,\cP_2}^{(1)})\times (\mathcal{O}_{K_0,\cP_3}^{(1)}\times \mathcal{O}_{K_0,\cP_4}^{(1)})\ar[d]\ar@{->>}[rr]\ar[d]&&\frac{(1+p\Z_p)\times(1+p\Z_p)}{\mathrm{diag}}\times \frac{(1+p\Z_p)\times(1+p\Z_p)}{\mathrm{diag}}\ar[d] \ar[r]^-{\simeq}&\Z_p^2\ar[d]\\
(\mathcal{O}_{K_1,\mathfrak{p}_1}^{(1)}\times \mathcal{O}_{K_1,\ppbar_1}^{(1)})\times (\mathcal{O}_{K_2,\mathfrak{p}_2}^{(1)}\times \mathcal{O}_{K_2,\ppbar_2}^{(1)})\ar@{->>}[rr]&&\frac{(1+p\Z_p)\times(1+p\Z_p)}{\mathrm{diag}}\times \frac{(1+p\Z_p)\times(1+p\Z_p)}{\mathrm{diag}}\ar[r]^-{\simeq}&\Z_p\times \Z_p
}
\end{equation}
Let $\La^-=\Z_p\dBr{\Ga^-}$ and define further $\bm{\kappa}_{ac}:\Ga^- \rightarrow \La^{\times}$ where
\begin{align*}
    ((1+p)^{-1/2},(1+p)^{1/2},(1+p)^{-1/2},(1+p)^{1/2})\mapsto [(1+p),(1+p)].
\end{align*}
Given an $\cO$-lattice $T$  inside a $G_{K_0}$-representation $V$, Shapiro's lemma allows us to write
\[
H^1\bigl(K_0,T\hat\otimes_{\cO}\Lambda_\cO^-(\boldsymbol{\kappa}_{\rm ac}^{-1})\bigr)\simeq H^1_{\rm Iw}(K_0[p^\infty],T),
\]
where $H^1_{\rm Iw}(K_0[p^\infty],T):=\varprojlim_{r,s} H^1(K_0[\mathfrak{p}_3^r\bar{\mathfrak{p}}_3^s],T)$ with limit under the corestriction maps. Then the image of $\boldsymbol{\mathcal{Z}}^{(3)}_{\mu_3}$ in (\ref{eq:big-5}) under Shapiro's lemma is an Iwasawa cohomology class
\begin{equation}\label{eq:big-class}
\boldsymbol{\mathcal{Z}}_{\mu_3}\in H^1_{\rm Iw}\bigl(K_0[\mu_3 p^\infty],T_f^\vee(1-k/2)\otimes\tilde{\psi}_1^{-1}\tilde{\psi}_2^{-1}\tilde{\kappa}_{\rm ac,1}^{(k_1-2)/2}\tilde{\kappa}_{\rm ac,2}^{(k_2-2)/2}(1-(k_1+k_2)/2)\bigr)
\end{equation}
 for the conjugate self-dual representation $T_f^\vee(1-k/2)$ twisted by the Hecke character 
\[\chi^{-1}=\tilde{\psi}_1^{-1}\tilde{\psi}_2^{-1}\mathbf{N}^{1-(k_1+k_2)/2}\]
(up to an anticyclotomic twist).
Here $\chi$ is anticyclotomic and of infinity type (corresponding to the order $(\cP_1,\cP_2,\cP_3,\cP_4)$ or $(1,\tau_3,\tau_2,\tau_1)$):
\[\left(\frac{2-k_1-k_2}{2},\frac{k_1+k_2-2}{2},\frac{k_1-k_2}{2},\frac{k_2-k_1}{2}\right).\] 
Denote by
\begin{equation}\label{eq:self-rep}
T_{f,\chi}=T_f^{\vee}(1-k/2)\otimes\chi^{-1}.
\end{equation}

Following the proof of Theorem \ref{maintheorem1} and invoking \cite[Thm 6.4.1]{Rubin-ES}, we can obtain a collection of Iwasawa cohomology classes for anticyclotomic twists (to eliminate $\tilde{\kappa}_{\rm ac,1}^{(k_1-2)/2}\tilde{\kappa}_{\rm ac,2}^{(k_2-2)/2}$). We thus arrive at the proof for the wild norm relation, which is formulated inside the following theorem.

\begin{thm}\label{maintheorem2}
Suppose $p\nmid 6h_{K_0}$ and $f$ is non-Eisenstein modulo $\mathfrak{P}$. Let $\mu_3\in \mathcal{N}$ and denote $m=N_{K_3/\Q}(\mu_3)$. Then there exists a collection of Iwasawa cohomology classes
\[
\mathbf{z}_{f,\chi,\mu_3}\in H^1_{\rm Iw}\bigl(K_0[\mu_3 p^\infty],T_{f,\chi}\bigr)
\]
such that for every split prime $\la_3$ of $\cO_{K_3}$ of norm $\ell$, where $\ell$ splits in $K_0$, with $(\ell,mp)=1$ we have the norm relation
\begin{equation*}
    \mathrm{Norm}_{K_0[\mu_3]}^{K_0[\mu_3\la_3]}(\mathbf{z}_{f,\chi,\mu_3\la_3})=P_{\cL_4}(\mathrm{Frob}_{\cL_4})(\mathbf{z}_{f,\chi,\mu_3}),
\end{equation*}
where $P_{\cL_4}(X)=\det(1-X\cdot \mathrm{Frob}_{\cL_4}|\,(T_{f,\chi})^\vee(1))$.
\end{thm}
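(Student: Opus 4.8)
The plan is to rerun the argument of Theorem~\ref{maintheorem1} $\Lambda$-adically, using the big diagonal classes of \cite[Sec.\,2.3]{CD} (built on \cite{BSV}) in place of the weight-$(2,2,2)$ classes, and then to invoke Rubin's machinery to pass from a family satisfying the norm relation up to an invertible twist to a genuine anticyclotomic Euler system for $T_{f,\chi}$.

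First I would record the $\Lambda$-adic tame norm relations: starting from the classes $\boldsymbol{\kappa}_m^{(1)},\boldsymbol{\kappa}_m^{(2)}$ attached to the Hida families $\bff$, $\bfg=\boldsymbol{\theta}_{\ch_1}$, $\bfh=\boldsymbol{\theta}_{\ch_2}$, pushforward along the degeneracy maps in the three factors recovers $\boldsymbol{\kappa}_m^{(1)}$ up to the tame Hecke operators $T_\ell,T_\ell'$ and the scalar $\ell+1$ (this is \cite[Prop.\,2.3.1]{CD}). Next I would establish the $\Lambda$-adic analogue of Lemma~\ref{keydiagram}: the $\Lambda$-adic norm maps $\mathcal{N}_{\mu_i}^{\mu_i\la_i}$ on the patched CM Hecke modules $H^1(\Gamma(m,p),\mathcal{D}'_{\kappa_g})\otimes\cO[\![{}^{1}H_{\fkf_1\mu_1\pp_1^\infty}^{(p)}]\!]$ and $H^1(\Gamma(m,p),\mathcal{D}'_{\kappa_h})\otimes\cO[\![{}^{2}H_{\fkf_2\mu_2\pp_2^\infty}^{(p)}]\!]$ remain compatible — after applying the projections (\ref{eq:proj-gh}), the decomposition (\ref{eq:inf-res-decomp}), and the diagonal map $e\circ w$ of Proposition~\ref{ringclass} — with the natural norm maps ${\rm Ind}_{K_0[\mu_3\la_3]}^{K_3}\to{\rm Ind}_{K_0[\mu_3]}^{K_3}$. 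This should follow by interpolating the diagram of Lemma~\ref{keydiagram} (itself a consequence of Theorem~\ref{norm1}) over the Iwasawa variables $\Gamma_{\pp_i}^{K_i}$, since the anticyclotomic twists $\tilde{\kappa}_{{\rm ac},i}$ and $\boldsymbol{\kappa}_{\rm ac}$ are built purely from those variables and therefore commute with every Hecke-module structure map in play.

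Combining these two ingredients with \cite[Prop.\,1.4.1]{CD}, exactly as in the proof of Proposition~\ref{wrongnorm}, I would obtain for the image of $\boldsymbol{\kappa}_m^{(2)}$ under (\ref{eq:proj-gh}), (\ref{eq:proj-f}), (\ref{eq:inf-res-decomp}) and $e\circ w$ the $\Lambda$-adic analogue of the norm relation of Proposition~\ref{wrongnorm}, with the very same operator (now with each $[\la_i]$ read inside the relevant $\Lambda$-adic group ring). The $\ell-1$ prefactor is then stripped off by descending to the diamond-operator quotient as in \cite[\S2.2]{CD} (following \cite[\S1.4]{DR2}) --- which loses nothing thanks to the non-Eisenstein hypothesis modulo $\mathfrak{P}$ --- yielding the class $\boldsymbol{\mathcal{Z}}_{\mu_3}$ of (\ref{eq:big-class}). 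Writing the representation as $T_{f,\chi}=T_f^\vee(1-k/2)\otimes\chi^{-1}$ as in (\ref{eq:self-rep}), the self-duality relation $\psi_1\psi_2((\ell))=\chi_{\psi_1}\chi_{\psi_2}(\ell)\ell^2=\ell^2$ (valid since $\ell$ splits in $K_0$, by (\ref{eq:self-dual})), the identity $\psi_1(\la_1)\psi_2(\bar{\la}_2)=\tilde{\psi}_1(\cL_4)\tilde{\psi}_2(\cL_4)$, and the triviality of $[\ell]\times[\ell]$ in the ring class group then let me --- after multiplying through by $-\psi_1(\la_1)\psi_2(\bar{\la}_2)([\la_1]\times[\bar{\la}_2])$, just as in the proof of Theorem~\ref{maintheorem1} --- recognize the operator as $P_{\cL_4}(\mathrm{Frob}_{\cL_4})$ with $P_{\cL_4}(X)=\det(1-X\cdot\mathrm{Frob}_{\cL_4}\,|\,(T_{f,\chi})^\vee(1))$. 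Finally \cite[Thm 6.4.1]{Rubin-ES}, together with Lemmas 9.6.1 and 9.6.3 of \cite{Rubin-ES}, converts the collection $\{\boldsymbol{\mathcal{Z}}_{\mu_3}\}$ --- which satisfies the relation only up to the multiplication just mentioned --- into a collection $\{\mathbf{z}_{f,\chi,\mu_3}\}$ of Iwasawa cohomology classes satisfying it on the nose and compatibly under the corestriction maps defining $H^1_{\rm Iw}(K_0[\mu_3 p^\infty],-)$.

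The main obstacle is the second step: one must verify that interpolating the weight-$(2,2,2)$ diagram of Lemma~\ref{keydiagram} over the variables $\Gamma_{\pp_i}^{K_i}$, and then carrying out the anticyclotomic twist via the diagram (\ref{eq:G_K0^-identification}), preserves commutativity with the diagonal projection $e\circ w$ of Proposition~\ref{ringclass}. Once this $\Lambda$-adic compatibility is in hand, the remainder of the proof is formally identical to the weight-$(2,2,2)$ case (Proposition~\ref{wrongnorm} and Theorem~\ref{maintheorem1}).
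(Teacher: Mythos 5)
Your proposal follows the same approach as the paper: interpolate the weight-$(2,2,2)$ construction and norm relations (Lemma~\ref{keydiagram}, Proposition~\ref{wrongnorm}, Theorem~\ref{maintheorem1}) along the Hida families $\bff$, $\bfg=\boldsymbol{\theta}_{\ch_1}$, $\bfh=\boldsymbol{\theta}_{\ch_2}$, project via (\ref{eq:proj-gh})--(\ref{eq:proj-f}) and the diagonal map $e\circ w$ of Proposition~\ref{ringclass}, strip the $\ell-1$ factor by passing to the diamond-operator quotient, massage the operator into $P_{\cL_4}(\mathrm{Frob}_{\cL_4})$ via the unit multiplier and Lemmas 9.6.1/9.6.3 of \cite{Rubin-ES}, and use \cite[Thm.\,6.4.1]{Rubin-ES} to absorb the anticyclotomic character twist into the Iwasawa algebra. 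One small clarification: in the paper the roles of the two Rubin inputs are distinct --- Lemmas 9.6.1/9.6.3 handle the discrepancy between the naive norm factor and the Euler factor $P_{\cL_4}$ (exactly as in Theorem~\ref{maintheorem1}), whereas Theorem 6.4.1 is what lets one pass from the class in (\ref{eq:big-class}), which carries the extra twist $\tilde{\kappa}_{\rm ac,1}^{(k_1-2)/2}\tilde{\kappa}_{\rm ac,2}^{(k_2-2)/2}$, to an Euler system for $T_{f,\chi}$ itself; your write-up blends these two steps, but the underlying argument is the same.
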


\section{Selmer groups }

In this section, we show that the classes constructed in Theorem \ref{maintheorem2} land in certain Selmer groups defined by Greenberg \cite{Greenberg55}. Keeping the setup at the start of Section~\ref{sec:main-thms}, we further assume that $f$ is a $p$-ordinary newform of even weight $k\geq 2$ with $p\nmid N_f$.

Let $\chi$ be an anticyclotomic Hecke character of $K_0$ of infinity type $(-a,a,-b,b)$ for some integers $a,b\geq 0$. We will focus on the conjugate self-dual $G_{K_0}$-representation 
\[
V_{f,\chi}:=V_f^\vee(1-k/2)\otimes\chi^{-1}.
\]


\begin{defn}For each prime $\cP\in\{\cP_1,\cP_2,\cP_3,\cP_4\}$ of $K_0$ above $p$, we fix a $G_{K_{0,\cP}}$-stable subspace $\mathscr{F}_\cP^+(V_{f,\chi})\subset V_{f,\chi}$ and denote  
$$\mathscr{F}_\cP^-(V_{f,\chi})=V_{f,\chi}/\mathscr{F}_\cP^+(V_{f,\chi}).$$
Let $L$ be a finite extension of $K_0$. The \emph{Greenberg Selmer group} ${\rm Sel}_{\mathscr{F}}(L,V_{f,\chi})$ attached to $\mathscr{F}=\{\mathscr{F}_{\cP}^+(V_{f,\chi})\}_{\cP\vert p}$ is defined by 
\begin{equation}\label{eqn:Greenberg-Selmer-Defn}
    {\rm Sel}_{\mathscr{F}}(L,V_{f,\chi}):={\rm ker}\biggl\{H^1(L,V_{f,\chi})\rightarrow\prod_{w}\frac{H^1(L_w,V_{f,\chi})}{H^1_{\mathscr{F}}(L_w,V_{f,\chi})}
    \biggr\},
\end{equation}
where $w$ runs over the finite primes of $L$, and the local conditions are given by
\[
H^1_{\mathscr{F}}(L_w,V_{f,\chi})=\begin{cases}
{\rm ker}\bigl\{H^1(L_w,V_{f,\chi})\rightarrow H^1(L_{w}^{\rm ur},V_{f,\chi})\bigr\} & \textrm {if $w\nmid p$},  \\[0.2em]
{\rm ker}\bigl\{H^1(L_w,V_{f,\chi})\rightarrow H^1(L_{w},\mathscr{F}_{\cP}^-(V_{f,\chi}))\bigr\} & \textrm{if $w\mid \cP\mid p$}.   
\end{cases}
\]
We fix a lattice $T_{f,\chi}\subset V_{f,\chi}$. Let $H^1_{\mathscr{F}}(L_w,T_{f,\chi})$ be the inverse image of $H^1_{\mathscr{F}}(L_w,V_{f,\chi})$ under the natural map $$H^1(L_w,T_{f,\chi})\rightarrow H^1(L_w,V_{f,\chi}).$$ 
This then defines ${\rm Sel}_{\mathscr{F}}(L,T_{f,\chi})$ as in (\ref{eqn:Greenberg-Selmer-Defn}). For any $\Z_p^2$-extension $L_\infty=\bigcup_{r,s}L_{r,s}$ of $L$, we put
\[
{\rm Sel}_{\mathscr{F}}(L_\infty,T_{f,\chi}):=\varprojlim_{r,s}{\rm Sel}_{\mathscr{F}}(L_{r,s},T_{f,\chi}),
\]
where the inverse limit is taken with respect to the corestriction map. We also put ${\rm Sel}_{\mathscr{F}}(L_\infty,V_{f,\chi}):=
{\rm Sel}_{\mathscr{F}}(L_\infty,T_{f,\chi})\otimes_{\Z_p}\Q_p$. Note that this group is independent of the chosen lattice $T_{f,\chi}$.
\end{defn}

\begin{defn}
We also define the \emph{Bloch-Kato Selmer group} ${\rm Sel}_{\rm BK}(L,V_{f,\chi})$ following \cite{BK}:
\[
{\rm Sel}_{\rm BK}(L,V_{f,\chi}):={\rm ker}\biggl\{H^1(L,V_{f,\chi})\rightarrow\prod_w\frac{H^1(L_w,V_{f,\chi})}{H^1_f(L_w,V_{f,\chi})}\biggr\},
\]
where the local conditions are given by
\[
H^1_{f}(L_w,V_{f,\chi})=
{\rm ker}\bigl\{H^1(L_w,V_{f,\chi})\rightarrow H^1(L_{w}^{\rm ur},V_{f,\chi})\bigr\},\]
at primes $w\nmid p$, and the crystalline condition at primes $w\mid p$:
\[H^1_{f}(L_w,V_{f,\chi})={\rm ker}\bigl\{H^1(L_w,V_{f,\chi})\rightarrow H^1(L_{w},V_{f,\chi}\otimes\mathbf{B}_{\rm cris})\bigr\}  
\]
with $\mathbf{B}_{\rm cris}$ being Fontaine's crystalline period ring. 
The local conditions $H^1_f(L_w,T_{f,\chi})\subset H^1(L_w,T_{f,\chi})$ are defined by propagation similarly. 
\end{defn}

Besides the crystalline condition, there are three local conditions at primes $\cP\mid p$ that we will be interested in:
\begin{enumerate}
    \item The \textbf{strict} condition: $$\mathscr{F}_{\cP}^+(V_{f,\chi})=0$$
    \item The \textbf{relaxed} condition: $$\mathscr{F}_{\cP}^+(V_{f,\chi})=V_{f,\chi}$$
    \item The \textbf{ordinary} condition, corresponding to the fact that the restriction of $V_{f,\chi}$ to $G_{\Q_p}$ is reducible (see equation \ref{subsec:p-ord}): 
    \begin{equation*}
    \mathscr{F}_{\cP}^+(V_{f,\chi})=V_{f,\chi}^+:=V_f^{\vee,+}(1-k/2)\otimes\chi^{-1}
    \end{equation*}
\end{enumerate}

\begin{defn}\label{def:rel_str_ord}    Denote by ${\rm Sel}_{\alpha,\beta,\gamma,\delta}(K_0,V)$ the subgroup of $H^1(K_0,V)$ where classes are unramified at all primes $v\nmid p$; and they satisfy the conditions $\alpha$, $\beta$, $\gamma$, $\delta$ at $\cP_1$, $\cP_2$, $\cP_3$, $\cP_4$ respectively, where $\alpha,\beta,\gamma,\delta \in\{\text{rel},\text{str},\text{ord}\}$, and these conditions correspond to the relaxed, strict, and ordinary condition respectively.
\end{defn}

We will now compute the explicit local conditions for the Bloch-Kato Selmer group. Here we shall adopt the convention that the $p$-adic cyclotomic character has Hodge--Tate weight $-1$. Thus, since $\chi$ has infinity type $(-a,a,-b,b)$, the $p$-adic avatar of $\chi$ has Hodge--Tate weight $a,-a,b,-b$ at ${\cP_1,\cP_2,\cP_3,\cP_4}$ respectively.

\begin{lem}\label{lem:BK-Gr}
Assume that $a\ge b$. For any finite extension $L$ of $K_0$ we have
\begin{equation}
{\rm Sel}_{\rm BK}(L,V_{f,\chi})=\begin{cases}
{\rm Sel}_{\ord,\ord}(L,V_{f,\chi}) &\textrm{if $k\geq 2a+2$},\\[0.2em]
{\rm Sel}_{\relstr,\ord}(L,V_{f,\chi}) &\textrm{if $2b+2\leq k<2a+2$}\\[0.2em]
{\rm Sel}_{\relstr,\relstr}(L,V_{f,\chi}) &\textrm{if $k<2b+2$}.
\end{cases}\nonumber
\end{equation}
\end{lem}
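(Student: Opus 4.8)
The plan is to compare the Bloch--Kato local conditions at the primes above $p$ with the Greenberg-type conditions $\{\mathrm{rel},\mathrm{str},\mathrm{ord}\}$ one prime at a time, using the fact that for a de Rham representation $W$ of $G_{K_{0,w}}$ with $w\mid p$ one has $\dim_{L_\mathfrak{P}} H^1_f(K_{0,w},W)=\dim_{L_\mathfrak{P}} H^0(K_{0,w},W)+\dim_{L_\mathfrak{P}} D_{\mathrm{dR}}(W)/\mathrm{Fil}^0$, together with the standard fact (Bloch--Kato) that $H^1_f(K_{0,w},W)$ coincides with $H^1_g$ and with $H^1_e$ whenever $W$ has no quotient or sub isomorphic to $\mathbf{Q}_p$ or $\mathbf{Q}_p(1)$. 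Since complex conjugation on $K_0$ interchanges $\cP_1\leftrightarrow\cP_2$ and $\cP_3\leftrightarrow\cP_4$, and since $V_{f,\chi}$ is conjugate self-dual, the local condition at $\cP_2$ (resp. $\cP_4$) is the orthogonal complement of that at $\cP_1$ (resp. $\cP_3$) under local Tate duality; thus it suffices to analyze $\cP_1$ and $\cP_3$ and then invoke $H^1_f$ being its own orthogonal complement.

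\textbf{Key steps.} First I would write down the Hodge--Tate weights of $V_{f,\chi}=V_f^\vee(1-k/2)\otimes\chi^{-1}$ at each $\cP_i$. The $p$-ordinary filtration (\ref{subsec:p-ord}) gives $V_f^{\vee,+}$ of Hodge--Tate weight one value and $V_f^{\vee,-}$ the other; after the twist by $(1-k/2)$ and by the $p$-adic avatar of $\chi^{-1}$ (which contributes $-a,a,-b,b$ at $\cP_1,\ldots,\cP_4$ under our sign convention), one gets that $V_{f,\chi}$ restricted to $G_{K_{0,\cP_i}}$ is an extension of a character of one Hodge--Tate weight by a character of the other, with the two weights being $\{a - (k/2-1)+\text{shift}, \ldots\}$ — I would tabulate these explicitly. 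The crux is: $H^1_f$ at $\cP_i$ equals the image of $H^1$ of the subspace on which the Hodge--Tate weight is $\le 0$ precisely when the weights straddle $0$ in the expected way. Concretely, $H^1_f(K_{0,\cP_i},V_{f,\chi})$ has dimension $1$ and equals $H^1(K_{0,\cP_i},V_{f,\chi}^+)$ (the ordinary condition) when the "+"-part has non-positive Hodge--Tate weight and the "$-$"-part has positive Hodge--Tate weight; it equals all of $H^1$ (relaxed, dimension $2$) when both weights are $\le 0$... wait, that can't be dimension $2$ generically — rather $H^1_f$ equals $H^1_g$ which is relaxed when the quotient has weight $\le 0$ so that $H^1(G_{K_{0,\cP_i}},\mathscr{F}^-)$ injects into $H^1$ with the geometric $H^1_g=H^1$; and it equals the strict condition ($0$) when forced by vanishing of $H^1_f$. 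So the case division $k\ge 2a+2$ versus $2b+2\le k<2a+2$ versus $k<2b+2$ is exactly governed by whether the relevant Hodge--Tate weights at $\cP_1$ and $\cP_3$ (the two "independent" primes, carrying the $a$ and $b$ data respectively) are positive or non-positive; I would check each of the three regimes against the three displayed outputs.

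\textbf{Main obstacle.} The bookkeeping of Hodge--Tate weights and signs — especially reconciling the convention "$p$-adic cyclotomic character has Hodge--Tate weight $-1$" with the infinity types $(-a,a,-b,b)$ and with the ordinary filtration on $V_f^\vee$ — is where errors are easiest to make, and this is the step I expect to consume the most care. A secondary subtlety is the boundary cases (equality $k=2a+2$ or $k=2b+2$): there one must verify that $V_{f,\chi}^\pm$ has no subquotient isomorphic to the trivial character or to $\mathbf{Q}_p(1)$ — equivalently that $f$ is non-Eisenstein and $\chi$ is not of a degenerate type — so that $H^1_f=H^1_g=H^1_e$ and the identification with the Greenberg condition is clean; this should follow from the running hypotheses ($\bar\rho_f$ irreducible / non-Eisenstein at $\mathfrak{P}$), but it needs to be stated. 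Once the $\cP_1,\cP_3$ analysis is done, the primes $\cP_2,\cP_4$ follow formally by conjugate self-duality and Tate local duality ($H^1_f$ orthogonal to $H^1_f$), and the global statement is immediate from the definitions of ${\rm Sel}_{\rm BK}$ and ${\rm Sel}_{\alpha,\beta,\gamma,\delta}$ since the prime-to-$p$ conditions (unramified) agree on the nose.
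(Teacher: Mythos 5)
Your proposal is essentially the same as the paper's argument: both reduce the claim to identifying $H^1_f$ at each prime above $p$ via the Panchiskin-style characterization (Bloch--Kato Thm.~4.1(ii): $H^1_f$ is the image of $H^1$ of a $G_{K_{0,\cP}}$-stable subspace whose Hodge--Tate weights are all $<0$, with quotient of weights $\geq 0$), and then both perform the same bookkeeping of Hodge--Tate weights of $V_{f,\chi}^\pm$ at $\cP_1,\dots,\cP_4$, which is exactly what produces the three regimes $k\geq 2a+2$, $2b+2\leq k<2a+2$, $k<2b+2$.

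A few remarks on where you diverge from what the paper does, none of them serious. You propose to analyze only $\cP_1,\cP_3$ and recover $\cP_2,\cP_4$ via local Tate duality from conjugate self-duality; the paper skips this reduction and just tabulates the weights at all four primes directly, which is simpler and avoids having to verify that the conjugate self-dual isomorphism is compatible with the ordinary filtrations. Your worry about the boundary cases $k=2a+2$, $k=2b+2$ (equality of $H^1_f$, $H^1_g$, $H^1_e$, needing to exclude trivial/cyclotomic subquotients) is unnecessary once one invokes the Panchiskin criterion directly, since that criterion already handles weight $0$ on the quotient; this is precisely why the paper cites it. Finally, be careful with the inequalities: you phrase the criterion as requiring the sub to have Hodge--Tate weight $\le 0$, but the Panchiskin condition requires the sub's weights to be strictly $<0$ (quotient $\geq 0$); with the paper's convention (cyclotomic character of weight $-1$) the weights of $V_{f,\chi}^\pm$ at $\cP_1$ are $-a-k/2$ and $-a-1+k/2$, and the case division depends on whether $-a-1+k/2\geq 0$, i.e.~$k\geq 2a+2$, so the strict/non-strict distinction matters at the edges. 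Also note your dimension computation is fine: when both weights at a prime are $<0$ one indeed has $\dim H^1_f = \dim D_{\mathrm{dR}}/\mathrm{Fil}^0 = 2 = \dim H^1$ (since $p$ splits completely in $K_0$, so $K_{0,\cP_i}=\Q_p$), so the relaxed condition does have the expected dimension.
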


\begin{proof}
By the Panchiskin condition \cite[Thm 4.1(ii)]{BK} (see also \cite[(3.1)-(3.2)]{nekovarCRM} and \cite[Lem.\,2, p.\,125]{flach-CT}), for every prime $w\vert \cP\vert p$ of $L/K_0/\Q$  we have 
\[
H^1_f(L_w,V_{f,\chi})={\rm im}\bigl\{H^1(L_w,{\rm Fil}_{\cP}^1(V_{f,\chi}))\rightarrow H^1(L_w,V_{f,\chi})\bigr\},
\]
where ${\rm Fil}_{\cP}^1(V_{f,\chi})\subset V_{f,\chi}$  is a $G_{K_{\cP}}$-stable subspace (assuming it exists) such that the Hodge--Tate weights of ${\rm Fil}_{\cP}^1(V_{f,\chi})$ (resp. $V_{f,\chi}/{\rm Fil}_{\cP}^1(V_{f,\chi})$) are all $<0$  (resp. $\geq 0$).

Now, by computing the Hodge--Tate weights table of $V_{f,\chi}^+$ 
and $V_{f,\chi}^-:=V_{f,\chi}/V_{f,\chi}^{+}$ at the primes of $K_0$ above $p$:
\begin{center}
\begin{tabular}{c|c|c|}
\cline{2-3}
  & $V_{f,\chi}^+$ & $V_{f,\chi}^-$ \\
 \hline
\multicolumn{1}{|c|}{HT weight at $\cP_1$} & $-a-k/2$ & $-a-1+k/2$  \\
 \hline
 \multicolumn{1}{|c|}{HT weight at $\cP_2$} & $a-k/2$ & $a-1+k/2$  \\
 \hline
 \multicolumn{1}{|c|}{HT weight at $\cP_3$} & $-b-k/2$ & $-b-1+k/2$  \\
 \hline
 \multicolumn{1}{|c|}{HT weight at $\cP_4$} & $b-k/2$ & $b-1+k/2$  \\
 \hline
\end{tabular}
\end{center}
we obtained the equalities in the lemma.
\end{proof}

Fix a choice of Galois stable subgroups $\mathscr{F}=\{\mathscr{F}_{\cP}^+(V_{f,\chi})\}_{\cP\vert p}$ and let $$A_{f,\chi}:={\rm Hom}_{\Z_p}(T_{f,\chi},\mu_{p^\infty}).$$ 
Define the associated \emph{dual Selmer group} ${\rm Sel}_{\mathscr{F}^*}(L,A_{f,\chi})$ by
\[
{\rm Sel}_{\mathscr{F}^*}(L,A_{f,\chi}):={\rm ker}\biggl\{H^1(L,A_{f,\chi})\rightarrow\prod_{w}\frac{H^1(L_w,A_{f,\chi})}{H^1_{\mathscr{F}^*}(L_w,A_{f,\chi})}\biggr\},
\]
where $H^1_{\mathscr{F}^*}(L_w,A_{f,\chi})$ is the orthogonal complement of $H^1_{\mathscr{F}}(L_w,T_{f,\chi})$ under local Tate pairing
\[
H^1(L_w,T_{f,\chi})\times H^1(L_w,A_{f,\chi})\rightarrow\Q_p/\Z_p.
\]

One can then compute the following:
\begin{enumerate}
\item The dual Selmer group of ${\rm Sel}_{\relstr,\ord}(L,T_{f,\chi})$ consists of classes that are unramified outside $p$ and have the strict, relaxed, ordinary, ordinary condition at $\cP_1,\cP_2,\cP_3,\cP_4$ respectively. Compatibly with Definition \ref{def:rel_str_ord}, this can be denoted as ${\rm Sel}_{\strrel,\ord}(L,A_{f,\chi})$.
\item The dual Selmer group of ${\rm Sel}_{\relstr,\relstr}(L,T_{f,\chi})$ consists of classes that are unramified outside $p$ and have the strict, relaxed, strict, relaxed condition at $\cP_1,\cP_2,\cP_3,\cP_4$ respectively. Compatibly with Definition \ref{def:rel_str_ord}, this can be denoted as ${\rm Sel}_{\strrel,\strrel}(L,A_{f,\chi})$.
\item The dual Selmer group of ${\rm Sel}_{\ord,\ord}(L,T_{f,\chi})$ consists of classes that are unramified outside $p$, and land in the image of the natural map
\[
H^1(L_w,\mathscr{F}_{\cP}^+(A_{f,\chi}))\rightarrow H^1(L_w,A_{f,\chi}),\quad\textrm{$\mathscr{F}_{\cP}^+(A_{f,\chi}):={\rm Hom}_{\Z_p}(\mathscr{F}_{\cP}^-(T_{f,\chi}),\mu_{p^\infty})$,}
\]
for $w\vert {\cP}\vert p$. Compatibly with Definition \ref{def:rel_str_ord}, this can be denoted as ${\rm Sel}_{\ord,\ord}(L,A_{f,\chi})$.

\end{enumerate}

\section{Triple product \texorpdfstring{$p$}{p}-adic \texorpdfstring{$L$}{L}-function and Selmer group}\label{subsec:triple}

Here, we will recall some conventions on Hida families, triple product $p$-adic $L$-function ($\bff$-{unbalanced}) and Selmer groups (balanced and $\bff$-{unbalanced}) following \cite{hsieh-triple}.

\subsection{Hida families} 
\label{subsubsec:hida}
We follow the convention of \cite[\S{3.1}]{hsieh-triple}. Let $\cO$ be the ring of integers of a finite extension of $\Q_p$. Let $\cR$ be a normal domain, finite flat over the Iwasawa algebra
\[
\Lambda:=\cO\dBr{1+p\Z_p}.
\] 
 Let $N$ be a positive integer primes $p$ and $\chi:(\Z/Np\Z)^\times\rightarrow\cO^\times$ be a Dirichlet character. Denote by $S^o(N,\chi,\cR)\subset\cR\dBr{q}$ the space of ordinary $\cR$-adic cusp forms of tame level $N$ and branch character $\chi$.

Let $\mathfrak{X}_\cR^+\subset{\rm Spec}\,\cR(\overline{\Q}_p)$ be the set of {arithmetic points} of $\cR$, which consists of the ring homomorphisms $Q:\cR\rightarrow\overline{\Q}_p$ such that for some $k_Q\in\Z_{\geq 2}$ called the {weight of $Q$} and $\epsilon_Q(z)\in\mu_{p^\infty}$,
$$Q\vert_{1+p\Z_p}:z\mapsto z^{k_Q-1}\epsilon_Q(z).$$   We say that $\boldsymbol{f}=\sum_{n=1}^\infty a_n(\bfff)q^n\in S^o(N,\chi,\cR)$ is a {primitive Hida family} if the specialization $\boldsymbol{f}_Q$ for every $Q\in\mathfrak{X}_\cR^+$ gives the $q$-expansion of an ordinary $p$-stabilised newform of weight  $k_Q$ and tame conductor $N$. Let $\mathfrak{X}_{\cR}^{\rm cls}\subset{\rm Spec}\,\cR(\overline{\Q}_p)$ be the set of ring homomorphisms $Q$ as above with $k_Q\in\Z_{\geq 1}$ such that $\bfff_Q$ is the $q$-expansion of a classical modular form.

Given $\bfff$ a primitive Hida family of tame conductor $N$, one can associate a Galois representation 
\[
\rho_{\bfff}:G_\Q\rightarrow{\rm Aut}_{\cR}(V_\bfff)\simeq{\rm GL}_2(\cR),
\]
where the determinant of $\rho_\bfff$ is $\chi_\cR\cdot\varepsilon_{\rm cyc}$, see \cite[\S{3.2}]{hsieh-triple}. By \cite[Thm.~2.2.2]{wiles88}, the restriction of $V_\bfff$ to $G_{\Q_p}$ is reducible and one has a short exact sequence
\[
0\rightarrow V_\bfff^+\rightarrow V_\bfff\rightarrow V_\bfff^-\rightarrow 0.
\]
Here the quotient $V_\bfff^-$ is free of rank one over $\cR$, with $G_{\Q_p}$ acting via the unramified character sending an arithmetic Frobenius ${\rm Frob}_p^{-1}$ to $a_p(\bfff)$. Let $\bT(N,\cR)$ be the Hecke algebra acting on $\bigoplus_\chi S^o(N,\chi,\cR)$, where $\chi$ runs over the characters of $(\Z/Np\Z)^\times$. There is a $\cR$-algebra homomorphism attached to $\bfff$
\[
\lambda_{\bfff}:\bT(N,\cR)\rightarrow\cR
\] 
that factors through a local component $\bT_{\fkm}$, where $\fkm$ is the maximal ideal containing $\ker \lambda_{\bfff}$. Following \cite{hida-AJM}, we define the {congruence ideal} $C(\bfff)$ of $\bfff$ by
\[
C(\bfff):=\lambda_{\bfff}({\rm Ann}_{\bT_\fkm}({\rm ker}\,\lambda_\bfff))\subset\cR.
\] 
Under the assumption that the residual representation $\bar{\rho}_{\bfff}$ is absolutely irreducible and $p$-distinguished, Wiles \cite{Wiles} and Hida \cite{hida-AJM} prove that $C(\bfff)$ is generated by a nonzero element $\eta_{\bfff}\in\cR$.

\subsection{CM Hida families revisited}
\label{subsec:CM}
We explicitly construct CM Hida families, following the exposition in \cite[\S{8.1}]{hsieh-triple}. Let $K$ be an imaginary quadratic field of discriminant $-D_K<0$, and suppose that $p=\pp\ppbar$ splits in $K$, with $\pp$ the prime of $K$ above $p$ induced by our fixed embedding $\imath_p:\overline{\Q}\rightarrow\overline{\Q}_p$.

Let $K_\infty$ be the $\Z_p^2$-extension of $K$. Let $K({\pp^\infty})$ be the maximal subfield of $K_\infty$ unramified outside $\pp$. Put 
\[
\Gamma_\infty:={\rm Gal}(K_\infty/K)\simeq\Z_p^2,\quad\quad
\Gamma_{\pp}:={\rm Gal}(K({\pp^\infty})/K)\simeq\Z_p.
\]

For every ideal $\mathfrak{c}\subset\cO_K$, recall that $K_\mathfrak{c}$ is the ray class field of $K$ of conductor $\mathfrak{c}$. Using our notation, $K({\pp^\infty})$ is the maximal $\Z_p$-extension of $K$ inside $K_{\pp^\infty}$. Denote by ${\rm Art}_\pp$ the restriction of the Artin map to $K_\pp^\times$, with geometric normalisation. Then ${\rm Art}_\pp$ induces an embedding $1+p\Z_p\rightarrow\Gamma_{\pp}$, where we identified $\Z_p^\times$ and $\cO_{K_\pp}^\times$ via $\iota_p$. Let $\gamma_\pp$ be the image of $1+p$ hence it will be a topological generator of $\Gamma_{\pp}$.  

For each variable $S$ let $\Psi_S:\Gamma_\infty\rightarrow\cO\dBr{S}^\times$ be the universal character 
given by
\[
\Psi_S(\sigma)=(1+S)^{l(\sigma)},
\]
where $l(\sigma)\in\Z_p$ is such that $\sigma\vert_{K({\pp^\infty})}=\gamma_\pp^{l(\sigma)}$. 
Now assume that $\mathfrak{c}$ is prime to $p$. Given a finite order character $\ch:G_K\rightarrow\cO^\times$ of conductor dividing $\mathfrak{c}$, let
\[
\boldsymbol{\theta}_\ch(S)(q)=\sum_{(\fa,\pp\mathfrak{c})=1}\ch(\sigma_{\fa})\Psi^{-1}_{\frac{1+S}{1+p}-1}(\sigma_\fa)q^{N_{K/\Q}(\fa)}\in\cO\dBr{S}\dBr{q},
\]
where $\sigma_\fa\in{\rm Gal}(K_{\mathfrak{c}\pp^\infty}/K)$ is the Artin symbol of $\fa$. 
Then $\boldsymbol{\theta}_\ch(S)$ is a Hida family  defined over $\cO\dBr{S}$ of tame level $N_{K/\Q}(\mathfrak{c})D_K$ and tame character $(\ch\circ\mathscr{V})\epsilon_{K}\omega^{-1}$, where $\mathscr{V}:G_\Q^{\rm ab}\rightarrow G_K^{\rm ab}$ is the transfer map and $\epsilon_{K}$ is the quadratic character corresponding to $K/\Q$.

\subsection{Triple products of Hida families}\label{subsubsec:triple-hida}

Let
\[
\bff\in S^o(N_f,\chi_f,\cR_f),\quad\bfg\in S^o(N_g,\chi_g,\cR_g),\quad\bfh\in S^o(N_h,\chi_h,\cR_h)
\]
be three primitive Hida families such that 
\begin{equation}\label{eq:a}
\textrm{$\chi_f\chi_g\chi_h=\omega^{2a}$ for some $a\in\Z$,}
\end{equation} 
where $\omega$ is the Teichm\"uller character. Let 
\[
\mathcal{R}=\cR_f\hat\otimes_{\cO}\cR_g\hat\otimes_{\cO}\cR_h
\] 
be a finite extension of the three-variable Iwasawa algebra $\Lambda\hat\otimes_{\cO}\Lambda\hat\otimes_{\cO}\Lambda$. 

Let $\mathfrak{X}_{\mathcal{R}}^+\subset{\rm Spec}\,\mathcal{R}(\overline{\Q}_p)$ be the weight space of $\mathcal{R}$ given by
\[
\mathfrak{X}_{\mathcal{R}}^+:=\left\{\underline{Q}=(Q_1,Q_2,Q_3)\in\mathfrak{X}_{\cR_f}^+\times\mathfrak{X}_{\cR_g}^{\rm cls}\times\mathfrak{X}_{\cR_h}^{\rm cls}\;:\;k_{Q_1}+k_{Q_2}+k_{Q_3}\equiv 0\;({\rm mod}\;2)\right\}.
\] 
One can then partition $\mathfrak{X}_\mathcal{R}^+=\mathfrak{X}_\mathcal{R}^{\rm bal}\sqcup\mathfrak{X}_\mathcal{R}^{\bff}\sqcup\mathfrak{X}_\mathcal{R}^{\bfg}\sqcup\mathfrak{X}_\mathcal{R}^{\bfh}$ as follows:
\begin{enumerate}
    \item the set of \emph{balanced} weights:
    \begin{align*}
\mathfrak{X}_{\mathcal{R}}^{\rm bal}&:=\left\{\underline{Q}\in\mathfrak{X}_{\mathcal{R}}^+\;:\;\textrm{$k_{Q_1}+k_{Q_2}+k_{Q_3}> 2k_{Q_i}$ for all $i\in\{1,2,3\}$}\right\},
\end{align*}
    \item the set of $\bff$-{unbalanced} weights:
    \begin{align*}
        \mathfrak{X}_{\mathcal{R}}^\bff&:=\left\{\underline{Q}\in\mathfrak{X}_{\mathcal{R}}^+\;:\;\textrm{$k_{Q_1}\geq k_{Q_2}+k_{Q_3}$}\right\},
    \end{align*}
    \item the set of $\bfg$-{unbalanced} weights:
    \begin{align*}
        \mathfrak{X}_{\mathcal{R}}^\bfg&:=\left\{\underline{Q}\in\mathfrak{X}_{\mathcal{R}}^+\;:\;\textrm{$k_{Q_2}\geq k_{Q_1}+k_{Q_3}$}\right\},
    \end{align*}
    \item the set of $\bfh$-{unbalanced} weights:
    \begin{align*}
    \mathfrak{X}_{\mathcal{R}}^\bfh&:=\left\{\underline{Q}\in\mathfrak{X}_{\mathcal{R}}^+\;:\;\textrm{$k_{Q_3}\geq k_{Q_1}+k_{Q_2}$}\right\}.
    \end{align*}
\end{enumerate}

Let $\mathbf{V}=V_\bff\hat\otimes_{\cO}V_{\bfg}\hat\otimes_{\cO}V_{\bfh}$ be the triple tensor product Galois representation attached to $(\bff,\bfg,\bfh)$. By (\ref{eq:a}), one can decompose the determinant of $\mathbf{V}$ as $\det\mathbf{V}=\mathcal{X}^2\varepsilon_{\rm cyc}$. Put  
\begin{equation}\label{eqn:V-dagger}
\Vdag:=\mathbf{V}\otimes\mathcal{X}^{-1}.
\end{equation}
This is a self-dual twist of $\mathbf{V}$. For any $\underline{Q}=(Q_1,Q_2,Q_3)\in\mathfrak{X}_{\mathcal{R}}^\bff$,  denote by $\VQdag$  the corresponding specialisation. 

For each prime $\ell$, let $\varepsilon_\ell(\VQdag)$ be the epsilon factor attached to the local representation $\VQdag\vert_{G_{\Q_\ell}}$  
(cf. \cite[p.\,21]{tate-background}). We assume that for some $\underline{Q}\in\mathfrak{X}_{\mathcal{R}}^\bff$, we have
\begin{equation}\label{eq:+1}
\textrm{ $\varepsilon_\ell(\VQdag)=+1$ for all prime factors $\ell$ of $N_f N_g N_h$.}
\end{equation}
Note that condition (\ref{eq:+1}) is independent of $\underline{Q}$ (see \cite[\S{1.2}]{hsieh-triple}). Furthermore it implies that the sign of the functional equation for the triple product $L$-function (with center at $s=0$) 
\[
L(\VQdag,s)
\] 
is $+1$ (resp. $-1$) for all $\underline{Q}\in\mathfrak{X}_{\mathcal{R}}^\bff\cup\mathfrak{X}_{\mathcal{R}}^\bfg\cup\mathfrak{X}_{\mathcal{R}}^\bfh$ (resp. $\underline{Q}\in\mathfrak{X}_{\mathcal{R}}^{\rm bal}$).

\begin{thm}[Theorem~A in \cite{hsieh-triple}]\label{thm:hsieh-triple}
Let $\bff,\bfg,\bfh$ be three primitive Hida families 
satisfying conditions (\ref{eq:a}) and (\ref{eq:+1}). Assume also that $\gcd(N_f,N_g,N_h)$ is squarefree,
and the residual representation $\bar{\rho}_{\bff}$ is absolutely irreducible and $p$-distinguished.
Fix a generator $\eta_{\bff}$ of the congruence ideal of $\bff$. Then there exists a unique element 
\[
\mathscr{L}_p^{\bff,\eta_{\bff}}(\bff,\bfg,\bfh)\in\mathcal{R}
\]
such that for all $\underline{Q}=(Q_1,Q_2,Q_3)\in\mathfrak{X}_{\mathcal{R}}^{\bff}$ of weight $(k_1,k_2,k_3)$ with $\epsilon_{Q_1}=1$ we have
\[
\bigl(\mathscr{L}_p^{\bff,\eta_\bff}(\bff,\bfg,\bfh)(\underline{Q})\bigr)^2=\Gamma_{\VQdag}(0)\cdot\frac{L(\VQdag,0)}{(\sqrt{-1})^{2k_1}\cdot\Omega_{\bff_{Q_1}}^2}\cdot\mathcal{E}_p(\mathscr{F}_p^{\bff}(\VQdag))\cdot\prod_{q\in\Sigma_{\rm exc}}(1+q^{-1})^2,
\]
where:
\begin{itemize}
\item $\Gamma_{\VQdag}(0)=16(2\pi)^{-2k_1}\Gamma(w_{\underline{Q}})\Gamma(w_{\underline{Q}}+2-k_2-k_3)\Gamma(w_{\underline{Q}}+1-k_2)\Gamma(w_{\underline{Q}}+1-k_3)$, 
\[
\text{and  }\quad w_{\underline{Q}}=(k_1+k_2+k_3-2)/2;
\] 
\item $\Omega_{\bff_{Q_1}}$ is the Hida canonical period
\[
\Omega_{\bff_{Q_1}}:=(-2\sqrt{-1})^{k_1+1}\cdot\frac{\Vert\bff_{Q_1}^\circ\Vert_{\Gamma_0(N_f)}^2}{\eta_{\bff_{Q_1}}}\cdot\Bigl(1-\frac{\chi_{f}'(p)p^{k_1-1}}{\alpha_{Q_1}^2}\Bigr)\Bigl(1-\frac{\chi_{f}'(p)p^{k_1-2}}{\alpha_{Q_1}^2}\Bigr),
\]
with $\bff_{Q_1}^\circ\in S_{k_1}(\Gamma_0(N_{f}))$ the newform of conductor $N_f$ associated with $\bff_{Q_1}$, $\chi_f'$ the prime-to-$p$ part of $\chi_f$, and $\alpha_{Q_1}$ the specialisation of $a_p(\bff)\in\cR_f^\times$ at $Q_1$;
\item $\mathcal{E}_p(\mathscr{F}_p^{\bff}(\VQdag))$ is the modified $p$-Euler factor
and  $\Sigma_{\rm exc}$ is an explicitly defined subset of the prime factors of $N_f N_g N_h$, \cite[p.~416]{hsieh-triple}.

\end{itemize}

\end{thm}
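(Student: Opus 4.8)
We sketch the strategy; the statement is \cite[Theorem~A]{hsieh-triple}, to which we refer for full details. The plan is to realize $\mathscr{L}_p^{\bff,\eta_\bff}(\bff,\bfg,\bfh)$ as a normalized $p$-adic Petersson product of $\mathcal{R}$-adic modular forms, in the style of Hida's Rankin--Selberg construction. On the $\bff$-unbalanced locus one has $k_{Q_1}\ge k_{Q_2}+k_{Q_3}$, so one first raises the weight of $\bfg$ by a suitable power of the $p$-adic Maass--Shimura operator $d=q\,\frac{d}{dq}$, multiplies by $\bfh$, and applies Hida's ordinary projector $e_{\rm ord}$ to land in the space of ordinary $\cR_f$-adic cusp forms; call the result $\Theta$. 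One then sets
\[
\mathscr{L}_p^{\bff,\eta_\bff}(\bff,\bfg,\bfh):=\frac{\langle\bff^{\ast},\Theta\rangle_{\bff}}{\eta_{\bff}},
\]
where $\langle\cdot,\cdot\rangle_\bff$ is the $\cR_f$-adic Petersson pairing on the $\bff$-isotypic quotient of the relevant Hecke module and $\bff^{\ast}$ is the dual Hida family. The hypotheses that $\bar{\rho}_{\bff}$ is absolutely irreducible and $p$-distinguished ensure, by Wiles \cite{Wiles} and Hida \cite{hida-AJM}, that the congruence ideal is principal with generator $\eta_\bff$, so this quotient lies in $\mathcal{R}$ and depends only on the choice of $\eta_\bff$; uniqueness is then automatic, since $\mathcal{R}$ is a domain and the interpolation points below are Zariski-dense in $\mathrm{Spec}\,\mathcal{R}$.

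For the interpolation formula one specializes at $\underline{Q}=(Q_1,Q_2,Q_3)\in\mathfrak{X}_{\mathcal{R}}^{\bff}$ with $\epsilon_{Q_1}=1$. The specialization of $\langle\bff^{\ast},\Theta\rangle_\bff/\eta_\bff$ unwinds to a classical Petersson product $\langle\bff_{Q_1}^\circ,\,e_{\rm ord}(\delta^{j}\bfg_{Q_2}\cdot\bfh_{Q_3})\rangle$, with $j=(k_1-k_2-k_3)/2$, up to the ratio $\Vert\bff_{Q_1}^\circ\Vert_{\Gamma_0(N_f)}^2/\eta_{\bff_{Q_1}}$. One then invokes Ichino's triple product formula (see \cite{hsieh-triple}): for suitably chosen automorphic test vectors, the square of the associated global trilinear period equals, up to elementary constants,
\[
\frac{L(\VQdag,0)}{L(1,\bff_{Q_1},\Ad)}\cdot\prod_v I_v^{\ast},
\]
a product of normalized local trilinear integrals $I_v^{\ast}$. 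The archimedean integral produces $\Gamma_{\VQdag}(0)$ together with the power $(\sqrt{-1})^{2k_1}$; the integrals at the finite primes dividing $N_f N_g N_h$ produce $\prod_{q\in\Sigma_{\rm exc}}(1+q^{-1})^2$ (here condition (\ref{eq:+1}), $\varepsilon_\ell(\VQdag)=+1$, is what forces these local periods to be nonzero and places $\underline{Q}$ on the $+1$ side of the functional equation); and the relation between $\eta_{\bff_{Q_1}}$ and $L(1,\bff_{Q_1}^\circ,\Ad)$ converts the adjoint $L$-value, divided by the Petersson norm, into $\Omega_{\bff_{Q_1}}^2$, the two Euler factors $\bigl(1-\chi_{f}'(p)p^{k_1-1}/\alpha_{Q_1}^2\bigr)\bigl(1-\chi_{f}'(p)p^{k_1-2}/\alpha_{Q_1}^2\bigr)$ being precisely the $p$-part correction relating the congruence number of the $p$-stabilization to that of the newform.

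The step I expect to be the main obstacle is the analysis at $p$: one must pin down the $p$-stabilizations of $\bff,\bfg,\bfh$ and the local data at $p$ so that the ordinary projection $e_{\rm ord}(\delta^{j}\bfg\cdot\bfh)$, paired against the ordinary line $V_\bff^+$ of $\bff$, reproduces exactly the modified $p$-Euler factor $\mathcal{E}_p(\mathscr{F}_p^{\bff}(\VQdag))$ and, crucially, varies $p$-adic analytically in $\underline{Q}$. This demands an explicit $q$-expansion computation of $e_{\rm ord}(d^{\bullet}\bfg\cdot\bfh)$ together with the explicit evaluation of the local trilinear integral at $p$ in the ordinary case, and is the technical heart of \cite{hsieh-triple}.
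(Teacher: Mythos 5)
The paper does not prove this theorem: it is quoted verbatim (Theorem~A of \cite{hsieh-triple}) and used as a black box, so there is no in-paper argument to compare your sketch against. That said, your sketch is a faithful outline of what Hsieh actually does: the construction via Hida's $\mathcal{R}$-adic Rankin--Selberg pairing with the $\bff$-isotypic projection and division by $\eta_\bff$, the role of the absolute irreducibility and $p$-distinguishedness hypotheses in making the congruence ideal principal, Ichino's triple product formula as the engine of the interpolation, and the identification of the local trilinear integrals at $\infty$, at the bad primes (giving the $\Sigma_{\rm exc}$ factor, with condition (\ref{eq:+1}) ensuring non-vanishing), and at $p$ (giving $\mathcal{E}_p$). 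You have also correctly flagged the genuine technical crux --- the $p$-adic interpolation of the $p$-local integral after ordinary projection --- which occupies the bulk of Hsieh's paper.

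Two small inaccuracies worth noting for your own record. First, in Hsieh's actual construction the $\mathcal{R}$-adic form paired against $\bff^{\ast}$ is built from the product $\bfg\cdot\bfh$ (after twisting by a character to match nebentypi and passing to a common weight variable), with the ordinary projector $e_{\rm ord}$ applied; the derivation operator $d = q\,d/dq$ surfaces only when one specializes and compares with the nearly holomorphic/nearly overconvergent picture, so describing the starting point as ``raise the weight of $\bfg$ by a power of $d$'' compresses a genuinely $\Lambda$-adic construction into its pointwise shadow. Second, uniqueness of $\mathscr{L}_p^{\bff,\eta_\bff}$ does not merely follow from $\mathcal{R}$ being a domain with Zariski-dense interpolation points: the interpolation formula only determines the \emph{square} of the value at each $\underline{Q}$, so density pins down $\mathscr{L}_p^{\bff,\eta_\bff}$ up to sign; in Hsieh's paper the element is given by an explicit pairing, which is what actually fixes it, and ``uniqueness'' in the statement should be read as uniqueness of that normalized pairing. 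Neither of these affects the overall correctness of your outline, but they are the kind of detail that would matter if you were filling in the sketch.
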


\subsection{Triple product Selmer groups}
\label{subsec:tripleSelmer}

Recall from equation (\ref{eqn:V-dagger}) that $\Vdag=\mathbf{V}\otimes\mathcal{X}^{-1}$ is the self-dual twist of the Galois representation associated to a triple of primitive Hida families $(\bff,\bfg,\bfh)$ 
given (\ref{eq:a}).

\begin{defn}\label{def:local-p}
Let
\[
\mathscr{F}^{\rm bal}_p(\Vdag)
:=\bigl(V_{\bff}\otimes V_{\bfg}^+\otimes V_{\bfh}^++V_{\bff}^+\otimes V_{\bfg}\otimes V_{\bfh}^++V_{\bff}^+\otimes V_{\bfg}^+\otimes V_{\bfh}\bigr)\otimes\mathcal{X}^{-1},
\]
and define the {balanced local condition} $\rH^1_{\rm bal}(\Q_p,\Vdag)$ by 
\[
\rH^1_{\rm bal}(\Q_p,\Vdag):={\rm im}\bigl(\rH^1(\Q_p,\mathscr{F}_p^{\rm bal}(\Vdag))\rightarrow\rH^1(\Q_p,\Vdag)\bigr).
\]
Similarly, let 
\begin{align*}
    \mathscr{F}_p^{\unb}(\Vdag)=\bigl(V_{\bff}^+\otimes V_{\bfg}\otimes V_{\bfh}\bigr)\otimes\mathcal{X}^{-1},\\
    \mathscr{F}_p^{\bfh}(\Vdag)=\bigl(V_{\bff}\otimes V_{\bfg}\otimes V_{\bfh}^+\bigr)\otimes\mathcal{X}^{-1}.
\end{align*}
 Define the {$\bff$-unbalanced local condition} $\rH^1_{\unb}(\Q_p,\Vdag)$ by
\[
\rH^1_{\bff}(\Q_p,\Vdag):={\rm im}\bigl(\rH^1(\Q_p,\mathscr{F}_p^{\bff}(\Vdag))\rightarrow\rH^1(\Q_p,\Vdag)\bigr)
\]
and the {$\bfh$-unbalanced local condition} $\rH^1_{\bfh}(\Q_p,\Vdag)$ by
\[
\rH^1_{\bfh}(\Q_p,\Vdag):={\rm im}\bigl(\rH^1(\Q_p,\mathscr{F}_p^{\bfh}(\Vdag))\rightarrow\rH^1(\Q_p,\Vdag)\bigr).
\]
\end{defn}

Note that the maps appearing in these definitions are injective, so we can identify $\rH^1_{\star}(\Q_p,\Vdag)$ with $\rH^1(\Q_p,\mathscr{F}_p^\star(\Vdag))$ for $\star\in\{{\rm bal},\bff,\bfh\}$.

\begin{defn}\label{def:Sel-bu}
Let $\star\in\{{\rm bal},\bff,\bfh\}$. Define the Selmer group ${\rm Sel}^\star(\Q,\Vdag)$ by
\[
{\rm Sel}^\star(\Q,\Vdag):=\ker\biggl\{\rH^1(\Q,\Vdag)\rightarrow\frac{\rH^1(\Q_p,\Vdag)}{\rH^1_\star(\Q_p,\Vdag)}\times\prod_{v\neq p}\rH^1(\Q_v^{\rm nr},\Vdag)\biggr\}.
\]
We call ${\rm Sel}^{\rm bal}(\Q,\Vdag)$ the {balanced} Selmer group, ${\rm Sel}^{\bff}(\Q,\Vdag)$ the {$\bff$-unbalanced} Selmer group, and ${\rm Sel}^{\bfh}(\Q,\Vdag)$ the {$\bfh$-unbalanced} Selmer group.
\end{defn}
\begin{defn}
Let $\Adag={\rm Hom}_{\Z_p}(\Vdag,\mu_{p^\infty})$ and let $\star\in\{{\rm bal},\bff,\bfh\}$. Define $\rH^1_{\star}(\Q_p,\Adag)\subset\rH^1(\Q_p,\Adag)$ to be the orthogonal complement of $\rH^1_\star(\Q_p,\Vdag)$ under the local Tate pairing
\[
\rH^1(\Q_p,\Vdag)\times\rH^1(\Q_p,\Adag)\rightarrow\Q_p/\Z_p.
\]
Similarly as above, we then define the balanced, the $\bff$-unbalanced and the $\bfh$-unbalanced  Selmer groups with coefficients in $\Adag$ by
\[
{\rm Sel}^\star(\Q,\Adag):=\ker\biggl\{\rH^1(\Q,\Adag)\rightarrow\frac{\rH^1(\Q_p,\Adag)}{\rH_\star^1(\Q_p,\Adag)}\times\prod_{v\neq p}\rH^1(\Q_v^{\rm nr},\Adag)\biggr\}.
\]
Let $$X^\star(\Q,\Adag)={\rm Hom}_{\Z_p}({\rm Sel}^\star(\Q,\Adag),\Q_p/\Z_p)$$ be the Pontryagin dual of ${\rm Sel}^\star(\Q,\Adag)$.
 
\end{defn}

\section{Arithmetic applications}
Finally, we obtain some arithmetic results from our constructed Euler system through identifying our classes as an anticyclotomic Euler system in the sense of Jetchev--Nekov\'a{\v r}--Skinner \cite{JNS} and the explicit reciprocity law.

\subsection{Reciprocity law and Greenberg--Iwasawa main conjectures}
\label{subsec:diag}

Let $(\bff,\bfg,\bfh)$ be a triple of primitive Hida families as in $\S\ref{subsubsec:hida}$ satisfying (\ref{eq:a}). Let $N={\rm lcm}(N_f,N_g,N_h)$. The big diagonal class constructed in \cite[\S{8.1}]{BSV}
\begin{equation}\label{eq:3-diag}
\kappa(\bff,\bfg,\bfh)\in\rH^1(\Q,\Vdag(N)),
\end{equation}
where $\Vdag(N)$ (this is $V(\bff,\bfg,\bfh)$ using the notation of \cite{BSV}) is a free $\mathcal{R}$-module isomorphic to finitely many copies of $\Vdag$, can be identified with classes $\widetilde{\boldsymbol{\kappa}}_m^{(1)}$, $\boldsymbol{\kappa}_m^{(2)}$ in equation $(2.14)$, $(2.15)$ respectively of \cite{CD}. The definition of the Selmer groups in $\S\ref{subsec:tripleSelmer}$ extends to $\Vdag(N)$, and by \cite[Cor.\,8.2]{BSV} we have $\kappa(\bff,\bfg,\bfh)\in{\rm Sel}^{\rm bal}(\Q,\Vdag(N))$. Now we choose level-$N$ test vectors $(\breve{\bff},\breve{\bfg},\breve{\bfh})$, provided by \cite[Thm.~A]{hsieh-triple}, to project the classes from $\mathbb{V}^\dagger(N)$ to $\mathbb{V}^\dagger$.

We define more $G_{\Q_p}$-invariant subspaces of $\Vdag$:  
\begin{equation}\label{eq:3-summands}
\begin{aligned}
\mathscr{F}_p^{3}(\Vdag)&=V_\bff^+\hat\otimes_{\cO} V_{\bfg}^+\hat\otimes_{\cO} V_{\bfh}^+\otimes\mathcal{X}^{-1}, \\
\mathbf{V}_{\bff}^{\bfg\bfh}&=V_\bff^-\hat\otimes_{\cO} V_{\bfg}^+\hat\otimes_{\cO} V_{\bfh}^+\otimes\mathcal{X}^{-1},\\
\mathbf{V}_{\bfg}^{\bff\bfh}&=V_\bff^+\hat\otimes_{\cO} V_{\bfg}^-\hat\otimes_{\cO} V_{\bfh}^+\otimes\mathcal{X}^{-1},\\
\mathbf{V}_{\bfh}^{\bff\bfg}&=V_\bff^+\hat\otimes_{\cO} V_{\bfg}^+\hat\otimes_{\cO} V_{\bfh}^-\otimes\mathcal{X}^{-1},
\end{aligned}
\end{equation}
and obtain 
\begin{equation}\label{eq:gr2}
\mathscr{F}^{\rm bal}_p(\Vdag)/\mathscr{F}_p^3(\Vdag)\cong
\mathbf{V}_{\bff}^{\bfg\bfh}\oplus\mathbf{V}_{\bfg}^{\bff\bfh}\oplus\mathbf{V}_{\bfh}^{\bff\bfg},
\end{equation}

Assume that the congruence ideal $C(\bff)\subset\cR_f$ is principal, generated by the nonzero $\eta_\bff\in\cR_f$ (this will be satisfied when the residual representation $\bar{\rho}_{\bff}$ is absolutely irreducible and $p$-distinguished). 
One can deduce from results in \cite{KLZ} the construction of an injective three-variable $p$-adic regulator map with pseudo-null cokernel:
\begin{equation}\label{eq:Log}
{\rm Log}^{\eta_\bff}
:\rH^1(\Q_p,\mathbf{V}_{\bff}^{\bfg\bfh})\rightarrow\mathcal{R},
\end{equation}
see the explicit map in \cite[\S{4.3.1}]{CD} and the explanation in \cite[\S{7.3}]{BSV}.

Let ${\rm res}_p(\kappa(\bff,\bfg,\bfh))_{\bff}$ be the image of $\kappa(\bff,\bfg,\bfh)$ under the natural composition of maps:
\begin{equation}\label{eq:map-ERL}
{\rm Sel}^{\rm bal}(\Q,\Vdag)\xrightarrow{{\rm res}_p}\rH^1(\Q_p,\mathscr{F}_p^{\rm bal}(\Vdag))\rightarrow\rH^1(\Q_p,\mathscr{F}_p^{\rm bal}(\Vdag)/\mathscr{F}_p^3(\Vdag))\rightarrow\rH^1(\Q_p,\mathbf{V}_{\bff}^{\bfg\bfh}),
\end{equation}
where we first restrict at $p$ and then project onto the first direct summand in (\ref{eq:gr2}). The following result is an explicit reciprocity law that relates diagonal cycles with the triple product $p$-adic $L$-functions.

\begin{thm}[Theorem~A in \cite{BSV}]
\label{thm:ERL}
Let $(\bff,\bfg,\bfh)$ be a triple of primitive Hida families as in Theorem~\ref{thm:hsieh-triple}. Then
\[
{\rm Log}^{\eta_\bff}({\rm res}_p(\kappa(\bff,\bfg,\bfh))_\bff)=\mathscr{L}_p^{\bff,\eta_\bff}(\bff,\bfg,\bfh).
\]
\end{thm}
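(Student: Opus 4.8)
The plan is to prove the claimed identity after specialisation at a Zariski-dense set of arithmetic primes of $\mathcal{R}$ and then to conclude, using that $\mathcal{R}$ is a domain in which, by Theorem~\ref{thm:hsieh-triple}, $\mathscr{L}_p^{\bff,\eta_\bff}(\bff,\bfg,\bfh)$ is the \emph{unique} element with the stated interpolation property. Concretely, both ${\rm Log}^{\eta_\bff}({\rm res}_p(\kappa(\bff,\bfg,\bfh))_\bff)$ and $\mathscr{L}_p^{\bff,\eta_\bff}(\bff,\bfg,\bfh)$ lie in $\mathcal{R}$, and the set of $\underline{Q}=(Q_1,Q_2,Q_3)\in\mathfrak{X}_{\mathcal{R}}^{\bff}$ with $\epsilon_{Q_1}=1$ is Zariski-dense in ${\rm Spec}\,\mathcal{R}$; so it suffices to show that ${\rm Log}^{\eta_\bff}({\rm res}_p(\kappa(\bff,\bfg,\bfh))_\bff)$ satisfies the interpolation formula of Theorem~\ref{thm:hsieh-triple} at every such $\underline{Q}$.

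Fix one such $\underline{Q}$ of weight $(k_1,k_2,k_3)$. First I would use that, at an $\bff$-unbalanced point, the Hodge--Tate weights of $\mathbf{V}_{\bff}^{\bfg\bfh}$ are all positive, so the Perrin-Riou/Coleman-type construction underlying the regulator ${\rm Log}^{\eta_\bff}$ of (\ref{eq:Log}) --- built from the maps of \cite{KLZ} --- specialises at $\underline{Q}$ to an explicit multiple of the Bloch--Kato dual exponential, the multiplier involving the unit root $\alpha_{Q_1}$ of $\bff$ at $Q_1$ and the modified $p$-Euler factor $\mathcal{E}_p(\mathscr{F}_p^{\bff}(\VQdag))$; here the dual exponential is paired, under de Rham duality on the $(\bff_{Q_1},\bfg_{Q_2},\bfh_{Q_3})$-isotypic component, against $\eta_{\bff_{Q_1}}\otimes\omega_{\bfg_{Q_2}}\otimes\omega_{\bfh_{Q_3}}$. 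Next I would identify the specialisation $\kappa(\bff,\bfg,\bfh)_{\underline{Q}}$ with the image under the étale $p$-adic Abel--Jacobi map of the generalised Gross--Kudla--Schoen diagonal cycle \cite{gross-kudla,gross-schoen} on the triple product of Kuga--Sato varieties over the modular curve of level $N$, suitably projected --- this is exactly what the construction of \cite[\S3,\S8]{BSV} produces, compatibly in the weight. Since ordinarity at $p$ makes this class crystalline, its dual exponential is computed by its image in de Rham cohomology, so the left-hand side at $\underline{Q}$ is reduced to the multiplier above times the pairing of the de Rham cycle class with $\eta_{\bff_{Q_1}}\otimes\omega_{\bfg_{Q_2}}\otimes\omega_{\bfh_{Q_3}}$.

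Finally I would evaluate that de Rham pairing through comparison with the archimedean realisation and the Garrett--Rankin--Ichino triple-product integral formula (as used in \cite{hsieh-triple}, in the spirit of \cite{DR2}): this integral computes the central critical value $L(\VQdag,0)$ up to the archimedean factor $\Gamma_{\VQdag}(0)$, the Petersson norm $\Vert\bff_{Q_1}^\circ\Vert_{\Gamma_0(N_f)}^2$, the power $(\sqrt{-1})^{2k_1}$, and explicit local zeta integrals at the primes dividing $N_fN_gN_h$, which produce the factors $\prod_{q\in\Sigma_{\rm exc}}(1+q^{-1})^2$; dividing by the congruence number $\eta_{\bff_{Q_1}}$ and absorbing the $p$-Euler factors converts the Petersson norm into the Hida canonical period $\Omega_{\bff_{Q_1}}$. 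Matching all of this against the multiplier from the previous paragraph reproduces precisely the formula of Theorem~\ref{thm:hsieh-triple} (the square there reflecting the fact that Ichino's formula computes a square of the period integral), so ${\rm Log}^{\eta_\bff}({\rm res}_p(\kappa(\bff,\bfg,\bfh))_\bff)$ and $\mathscr{L}_p^{\bff,\eta_\bff}(\bff,\bfg,\bfh)$ agree at all such $\underline{Q}$, hence are equal in $\mathcal{R}$.

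The hardest part will be the explicit $p$-adic Hodge-theoretic computation in the middle step: evaluating the dual exponential of the specialised diagonal cycle and matching it, with all normalisations, to Ichino's formula. This demands careful bookkeeping of the $p$-stabilisation Euler factors, the precise choice of local test vectors at the primes dividing $N$, the comparison isomorphisms between the étale, de Rham and Betti realisations of $\Vdag$, and the exact interpolation property of the big logarithm ${\rm Log}^{\eta_\bff}$; in other words, the heart of the proof is the compatibility between the Perrin-Riou-style regulator defining the left-hand side and the Hida-theoretic interpolation defining $\mathscr{L}_p^{\bff,\eta_\bff}$.
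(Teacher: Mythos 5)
This statement is cited verbatim as Theorem~A of \cite{BSV}; the paper gives no proof of its own, so there is nothing to compare your argument against except the actual argument in \cite{BSV}, which your sketch only partially reflects. Your overall strategy---establish the interpolation property of ${\rm Log}^{\eta_\bff}({\rm res}_p(\kappa(\bff,\bfg,\bfh))_\bff)$ at a Zariski-dense set of points $\underline{Q}$ and invoke uniqueness---is the right frame. But your second step contains a genuine gap. You propose to identify $\kappa(\bff,\bfg,\bfh)_{\underline{Q}}$, for $\underline{Q}\in\mathfrak{X}_\mathcal{R}^{\bff}$, with the \'etale Abel--Jacobi image of a generalised Gross--Kudla--Schoen diagonal cycle, and then to compute its de Rham realisation from the cycle class. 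This fails precisely because the generalised diagonal cycle on the triple product of Kuga--Sato varieties exists only for \emph{balanced} weights $(k_1,k_2,k_3)$ (each $k_i<k_j+k_k$); at $\bff$-unbalanced points ($k_1\ge k_2+k_3$) there is no such cycle, and $\kappa(\bff,\bfg,\bfh)_{\underline{Q}}$ is a purely $p$-adic limit of geometric classes with no direct motivic incarnation. Relatedly, your assertion that ordinarity renders the class crystalline and that ``its dual exponential is computed by its image in de Rham cohomology'' is confused: a crystalline class has \emph{vanishing} dual exponential; what the proof actually needs is that the projection of ${\rm res}_p(\kappa(\bff,\bfg,\bfh))$ to $H^1(\Q_p,\mathbf{V}_{\bff}^{\bfg\bfh})$ is \emph{not} in the finite subspace at unbalanced $\underline{Q}$, and it is this non-crystalline quotient that ${\rm exp}^*$ sees.

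The actual argument in \cite{BSV} circumvents this by exploiting the balanced regime, where the cycle does exist and the Perrin-Riou regulator specialises to the Bloch--Kato logarithm ${\rm log}_p$ rather than ${\rm exp}^*$, together with the analyticity of both sides over $\mathcal{R}$: one computes ${\rm log}_p(\kappa(\bff,\bfg,\bfh)_{\underline{Q}})$ at balanced points $\underline{Q}$ via a $p$-adic Gross--Zagier formula (Besser's syntomic/finite-polynomial cohomology computation of the Abel--Jacobi image, matched against a suitably normalised Garrett--Rankin--Ichino integral), and then the equality at $\bff$-unbalanced points follows from the density argument in the opposite direction to the one you wrote. So your step one (density) and step four (Ichino) are fine, but the point at which you specialise and the realisation you compute are not the ones that carry geometric content.
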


Assume that the associated ring $\mathcal{R}$ is  regular. Similar to \cite[\S{7.3}]{ACR}, the following result can be seen as the equivalence between two different formulations of the Iwasawa main conjecture in the style of Greenberg \cite{Greenberg55} for the $p$-adic deformation $\Vdag$, relating the $\bff-$unbalanced Selmer group to the balanced one (or one with $\mathscr{L}_p^{\bff}$ and another `without $p$-adic $L$-functions'). 


\begin{prop}[Proposition 4.3.3 in\cite{CD}]\label{prop:equiv}
The following statements {\rm (I)} and {\rm (II)} are equivalent:
\begin{enumerate}
\item[(I)] $\mathscr{L}_p^{\unb,\eta_{\bff}}(\bff,\bfg,\bfh)$ is nonzero, the modules ${\rm Sel}^{\unb}(\Q,\Vdag)$ and $X^{\unb}(\Q,\Adag)$ are both $\mathcal{R}$-torsion, and 
\[
{\rm char}_\mathcal{R}\bigl(X^{\unb}(\Q,\Adag)\bigr)=\bigl(\mathscr{L}_p^\unb(\bff,\bfg,\bfh)^2\bigr)
\]
in $\mathcal{R}\otimes_{\Z_p}\Q_p$.

\item[(II)] $\kappa(\bff,\bfg,\bfh)$ is not $\mathcal{R}$-torsion, the modules ${\rm Sel}^{\rm bal}(\Q,\Vdag)$ and $X^{\rm bal}(\Q,\Adag)$ have both $\mathcal{R}$-rank one, and
\[
{\rm char}_{\mathcal{R}}\bigl(X^{\rm bal}(\Q,\Adag)_{\rm tors}\bigr)={\rm char}_{\mathcal{R}}\biggl(\frac{{\rm Sel}^{\rm bal}(\Q,\Vdag)}{\mathcal{R}\cdot\kappa(\bff,\bfg,\bfh)}\biggr)^2
\]
in $\mathcal{R}\otimes_{\Z_p}\Q_p$, where the subscript ${\rm tors}$ denotes the $\mathcal{R}$-torsion submodule.
\end{enumerate}
\end{prop}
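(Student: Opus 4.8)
The plan is to deduce the equivalence (I)$\,\Leftrightarrow\,$(II) exactly as for the analogous statement in \cite[\S{7.3}]{ACR}, the bridge being the explicit reciprocity law of Theorem~\ref{thm:ERL} together with the injectivity (with pseudo-null cokernel) of the regulator ${\rm Log}^{\eta_\bff}$ of \eqref{eq:Log}. Both main conjectures concern the same Galois representation $\Vdag$ and Selmer structures that are unramified away from $p$ and, at $p$, cut out by the two local conditions $\mathscr{F}_p^{\rm bal}(\Vdag)$ and $\mathscr{F}_p^{\bff}(\Vdag)$. The explicit $\pm$-decompositions of \S\ref{subsubsec:hida} and \eqref{eq:3-summands} show that these two rank-four subspaces agree on the rank-three piece $\mathscr{F}_p^{\cap}(\Vdag):=\mathscr{F}_p^{\rm bal}(\Vdag)\cap\mathscr{F}_p^{\bff}(\Vdag)$ and differ only in one rank-one direction: $\mathscr{F}_p^{\rm bal}(\Vdag)/\mathscr{F}_p^{\cap}(\Vdag)\cong\mathbf{V}_{\bff}^{\bfg\bfh}$, whereas $\mathscr{F}_p^{\bff}(\Vdag)/\mathscr{F}_p^{\cap}(\Vdag)\cong V_{\bff}^{+}\hat\otimes_{\cO}V_{\bfg}^{-}\hat\otimes_{\cO}V_{\bfh}^{-}\otimes\mathcal{X}^{-1}$; moreover $\mathscr{F}_p^{\rm bal}$ and $\mathscr{F}_p^{\bff}$ are both Lagrangian for the self-duality pairing on $\Vdag$, and these two differing lines are interchanged by that pairing.

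First I would record the local inputs, which hold because $\mathcal{R}$ is regular and $\bar\rho_{\bff}$ is absolutely irreducible and $p$-distinguished: for each of the rank-one $G_{\Q_p}$-representations above ($\mathscr{F}_p^{3}(\Vdag)$, $\mathbf{V}_{\bff}^{\bfg\bfh}$, $\mathbf{V}_{\bfg}^{\bff\bfh}$, $\mathbf{V}_{\bfh}^{\bff\bfg}$, and $V_{\bff}^{+}\hat\otimes_{\cO}V_{\bfg}^{-}\hat\otimes_{\cO}V_{\bfh}^{-}\otimes\mathcal{X}^{-1}$) one has $\rH^0(\Q_p,-)=\rH^2(\Q_p,-)=0$, so by the local Euler characteristic formula $\rH^1(\Q_p,-)$ is $\mathcal{R}$-torsion-free of rank one; and $\rH^0(\Q,\Vdag)=\rH^0(\Q,\Adag)=0$, with the unramified conditions at $v\ne p$ as expected, so that the global Euler characteristic formula gives ${\rm rank}_{\mathcal{R}}\,{\rm Sel}^{\star}(\Q,\Vdag)={\rm rank}_{\mathcal{R}}\,X^{\star}(\Q,\Adag)$ for $\star\in\{{\rm bal},\bff\}$. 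Next I would introduce the intermediate Selmer group ${\rm Sel}^{\cap}(\Q,\Vdag)$ with local condition $\rH^1(\Q_p,\mathscr{F}_p^{\cap}(\Vdag))$ at $p$ (unramified elsewhere) and its dual $X^{\cap}(\Q,\Adag)$; the Poitou--Tate exact sequence comparing Selmer groups for nested local conditions (as in \cite{Greenberg55}) then supplies the two five-term exact sequences
\[
0\to{\rm Sel}^{\cap}(\Q,\Vdag)\to{\rm Sel}^{\rm bal}(\Q,\Vdag)\xrightarrow{\rho}\rH^1(\Q_p,\mathbf{V}_{\bff}^{\bfg\bfh})\to X^{\cap}(\Q,\Adag)\to X^{\rm bal}(\Q,\Adag)\to 0
\]
and the same sequence with $\bff$ replacing ${\rm bal}$ and $V_{\bff}^{+}\hat\otimes_{\cO}V_{\bfg}^{-}\hat\otimes_{\cO}V_{\bfh}^{-}\otimes\mathcal{X}^{-1}$ replacing $\mathbf{V}_{\bff}^{\bfg\bfh}$, both sharing the same ${\rm Sel}^{\cap}$ and $X^{\cap}$.

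With these in hand the argument runs in three moves. (a) By Theorem~\ref{thm:ERL} the composite \eqref{eq:map-ERL} carries $\kappa(\bff,\bfg,\bfh)$ to a class with ${\rm Log}^{\eta_\bff}({\rm res}_p(\kappa(\bff,\bfg,\bfh))_{\bff})=\mathscr{L}_p^{\bff,\eta_\bff}(\bff,\bfg,\bfh)$; since ${\rm Log}^{\eta_\bff}$ is injective with pseudo-null cokernel (and characteristic ideals are insensitive to pseudo-null modules) this gives at once the chain of equivalences ``$\kappa(\bff,\bfg,\bfh)$ is not $\mathcal{R}$-torsion $\Leftrightarrow$ ${\rm res}_p(\kappa(\bff,\bfg,\bfh))_{\bff}\ne 0$ $\Leftrightarrow$ $\mathscr{L}_p^{\bff,\eta_\bff}(\bff,\bfg,\bfh)\ne 0$'' and the identity ${\rm char}_{\mathcal{R}}\bigl(\rH^1(\Q_p,\mathbf{V}_{\bff}^{\bfg\bfh})/\mathcal{R}\cdot{\rm res}_p(\kappa(\bff,\bfg,\bfh))_{\bff}\bigr)=\bigl(\mathscr{L}_p^{\bff,\eta_\bff}(\bff,\bfg,\bfh)\bigr)$. (b) A rank count in the two five-term sequences, using the local inputs, shows that when $\mathscr{L}_p^{\bff,\eta_\bff}(\bff,\bfg,\bfh)\ne 0$ one has ${\rm Sel}^{\cap}(\Q,\Vdag)$ $\mathcal{R}$-torsion, ${\rm rank}_{\mathcal{R}}\,{\rm Sel}^{\rm bal}(\Q,\Vdag)={\rm rank}_{\mathcal{R}}\,X^{\rm bal}(\Q,\Adag)=1$, and ${\rm Sel}^{\bff}(\Q,\Vdag)$, $X^{\bff}(\Q,\Adag)$ both $\mathcal{R}$-torsion, so that the two lists of rank hypotheses in (I) and (II) coincide. (c) Finally, tracing characteristic ideals along the two sequences --- the intermediate contributions attached to ${\rm Sel}^{\cap}$ and $X^{\cap}$ cancelling against one another by the self-duality of $\Vdag$ (Lagrangian $\mathscr{F}_p^{\rm bal}$, $\mathscr{F}_p^{\bff}$ with mutually dual differing lines) --- yields, whenever $\kappa(\bff,\bfg,\bfh)$ is not $\mathcal{R}$-torsion, an identity of ideals in $\mathcal{R}\otimes_{\Z_p}\Q_p$ of the shape
\[
{\rm char}_{\mathcal{R}}\bigl(X^{\rm bal}(\Q,\Adag)_{\rm tors}\bigr)\cdot\bigl(\mathscr{L}_p^{\bff,\eta_\bff}(\bff,\bfg,\bfh)\bigr)^{2}={\rm char}_{\mathcal{R}}\bigl(X^{\bff}(\Q,\Adag)\bigr)\cdot{\rm char}_{\mathcal{R}}\biggl(\frac{{\rm Sel}^{\rm bal}(\Q,\Vdag)}{\mathcal{R}\cdot\kappa(\bff,\bfg,\bfh)}\biggr)^{2};
\]
since both (I) and (II) fail trivially when $\kappa(\bff,\bfg,\bfh)$ is $\mathcal{R}$-torsion (then $\mathscr{L}_p^{\bff,\eta_\bff}(\bff,\bfg,\bfh)=0$ by the reciprocity law), the implications (I)$\Rightarrow$(II) and (II)$\Rightarrow$(I) both follow at once from this identity together with (a) and (b).

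The main obstacle is the characteristic-ideal bookkeeping in step (c): one must track carefully the torsion submodules of, and the cokernels of the connecting maps in, the two five-term sequences, and verify that the contributions attached to ${\rm Sel}^{\cap}$ and $X^{\cap}$ genuinely cancel in the comparison --- this is precisely where the Lagrangian property of $\mathscr{F}_p^{\rm bal}(\Vdag)$ and $\mathscr{F}_p^{\bff}(\Vdag)$ and the mutual duality of $\mathbf{V}_{\bff}^{\bfg\bfh}$ and $V_{\bff}^{+}\hat\otimes_{\cO}V_{\bfg}^{-}\hat\otimes_{\cO}V_{\bfh}^{-}\otimes\mathcal{X}^{-1}$ are used. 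The local vanishing statements in the first step and the pseudo-nullity of the cokernel of ${\rm Log}^{\eta_\bff}$ are standard given that $\mathcal{R}$ is regular and $\bar\rho_\bff$ is $p$-distinguished (cf. \cite{KLZ,BSV,CD}), and the rank counts are elementary; accordingly I would organise the whole argument so as to follow \cite[\S{7.3}]{ACR} step by step, the only point requiring the biquadratic/triple-product specifics being the explicit description of $\mathscr{F}_p^{\rm bal}$, $\mathscr{F}_p^{\bff}$ and of the line exchanged between them.
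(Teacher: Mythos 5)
Your proposal reconstructs the standard Poitou--Tate / characteristic-ideal comparison argument of \cite[\S 7.3]{ACR}, which is exactly the route the paper relies on: the paper itself gives no proof here, but cites \cite[Prop.\,4.3.3]{CD}, whose proof follows \cite{ACR} via the same intermediate Selmer group $\mathrm{Sel}^{\cap}$, the same five-term Greenberg sequences, and the reciprocity law of Theorem~\ref{thm:ERL} together with the injectivity (pseudo-null cokernel) of $\mathrm{Log}^{\eta_{\bff}}$. The only small imprecision is the claim in step (a) that $\kappa(\bff,\bfg,\bfh)$ not $\mathcal{R}$-torsion is \emph{equivalent} to $\mathrm{res}_p(\kappa(\bff,\bfg,\bfh))_{\bff}\neq 0$ in isolation --- the forward implication needs the torsion-ness of $\mathrm{Sel}^{\cap}(\Q,\Vdag)$, which is only available once one of the rank hypotheses in (I) or (II) is in force --- but this is organizational and does not affect the validity of the equivalence you prove.
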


\subsection{The set-up}\label{subsec:factor-L-def+Sel}

Let $f\in S_{2r}(pN_f)$ be a $p$-stabilised newform, and suppose the residual representation $\bar\rho_f$ is absolutely irreducible and $p$-distinguished. By Hida theory, $f$ is the specialisation of a unique primitive Hida family $\bff\in S^o(N_f,\cR)$ 
at an arithmetic point $Q_1\in\mathfrak{X}_\cR^+$ of weight $2r$.  
For $i\in\{1,2\}$ let $\fkf_i\subset\cO_{K_i}$ be an ideal coprime to $pN_f$, $\ch_i$ be ray class characters of $K_i$ of conductors dividing $\mathfrak{f}_i$. Let $\chi_{\ch_i}$ be the central character of $\ch_i$. We assume that  
\begin{equation}\label{eq:sd-chars}
\chi_{\ch_1}\epsilon_{K_1}\chi_{\ch_2}\epsilon_{K_2}=1,
\end{equation}
and let
\begin{equation}\label{eq:gg*}
\bfg_1=\boldsymbol{\theta}_{\ch_1}(S_1)\in\cO\dBr{S_1}\dBr{q}, 
\quad
\bfg_2=\boldsymbol{\theta}_{\ch_2}(S_2)\in\cO\dBr{S_2}\dBr{q}
\end{equation}
be the CM Hida families 
attached to $\ch_1$ and $\ch_2$, respectively.

The triple $(\bff,\bfg_1,\bfg_2)$ satisfies conditions (\ref{eq:a}) and the associated $\bff$-unbalanced triple product $p$-adic $L$-function $\mathscr{L}_p^{\bff,\eta_{\bff}}(\bff,\bfg_1,\bfg_2)$ 
is an element in $\mathcal{R}=\cR\hat\otimes_{\cO}\cO\dBr{S_1}\hat\otimes_{\cO}\cO\dBr{S_2}\simeq\cR\dBr{S_1,S_2}$. Let 
\begin{equation}\label{eq:triple-2var}
\mathscr{L}_p^{\unb,\eta_\bff}(f,\bfg_1,\bfg_2)\in\cO\dBr{S_1,S_2}
\end{equation}
be its image  
under the natural map $\cR\dBr{S_1,S_2}\rightarrow\cO\dBr{S_1,S_2}$ defined by $Q_1$.

Write $\Vsdag$ for the specialisation of $\Vdag$ at $Q_1$. Let $V_f^\vee$ be the Galois representation associated to $f$, and recall that $\det(V_f^\vee)=\varepsilon_{\rm cyc}^{2r-1}$ in our conventions. Setting $T_i=\mathbf{v}^{-1}(1+S_i)-1$ ($i\in\{1,2\}$), we have 
$\det(V_{\bfg_{T_1}}\otimes V_{\bfh_{T_2}})=\Psi_{T_1}\Psi_{T_2}\circ\mathscr{V}$, and so
\begin{equation}\label{eq:dec-V-first}
\begin{aligned}
\Vsdag&\simeq T_f^{\vee}\otimes({\rm Ind}_K^\Q\ch_1^{-1}\Psi_{T_1})\otimes({\rm Ind}_K^\Q\ch_2^{-1}\Psi_{T_2})\otimes\varepsilon_{\rm cyc}^{1-r}(\Psi_{T_1}^{-1/2}\Psi_{T_2}^{-1/2}\circ\mathscr{V})\\
&\simeq T_f^{\vee}(1-r)\otimes{\rm Ind}_{K_0}^\Q\tilde{\ch}_1^{-1}\tilde{\Psi}_{T_1}\tilde{\ch}_2^{-1}\tilde{\Psi}_{T_2},
\end{aligned}
\end{equation}
where $T_f^{\vee}$ is a $G_\Q$-stable $\cO$-lattice inside $V_f^{\vee}$. In particular, we get
\begin{equation}\label{eq:shapiro}
\rH^1(\Q,\Vsdag)\simeq\rH^1(K_0,T_f^{\vee}(1-r)\otimes\tilde{\ch}_1^{-1}\tilde{\Psi}_{T_1}\tilde{\ch}_2^{-1}\tilde{\Psi}_{T_2})
\end{equation}
by Shapiro's lemma.

\subsection{Local conditions at $p$ of the Euler system}\label{subsec:local-condition} 


Recall from Theorem~\ref{maintheorem2} that we have constructed classes
\[
\mathbf{z}_{f,\chi,\mu_3}\in H^1_{\rm Iw}(K_0[\mu_3p^\infty],T_{f,\chi}),
\]
where $T_{f,\chi}=T_f^{\vee}(1-r)\otimes\chi^{-1}$ and $\chi^{-1}=\tilde{\psi}_1^{-1}\tilde{\psi}_2^{-1}\mathbf{N}^{1-(k_1+k_2)/2}$.  

\begin{prop}
\label{propselmer2}
Suppose $p\nmid 6h_{K_0}$ and $f$ is non-Eisenstein modulo $\mathfrak{P}$. Let $\mu_3\in\mathcal{N}$ (taken from Section \ref{subsec:prime-decomp}) run over squarefree product of prime ideals of $\la_3\in\mathcal{L}$  with $m=N_{K_3/\Q}(\mu_3)$ coprime to $p$. The class $\mathbf{z}_{f,\chi,\mu_3}$ of Theorem\,\ref{maintheorem2} satisfies
\[
\mathbf{z}_{f,\chi,\mu_3}\in{\rm Sel}_{\relstr,\ord}(K_0[\mu_3p^\infty],T_{f,\chi}).
\]
\end{prop}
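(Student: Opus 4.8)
The plan is to verify, prime by prime, that $\mathbf{z}_{f,\chi,\mu_3}$ satisfies every local condition cutting out ${\rm Sel}_{\relstr,\ord}(K_0[\mu_3p^\infty],T_{f,\chi})$, closely following the analogous statement in \cite[\S3]{CD}; the only genuinely new feature is that $p$ now has four primes $\cP_1,\cP_2,\cP_3,\cP_4$ above it in $K_0$ instead of two. Away from $p$, the class is unramified: the class $\boldsymbol{\mathcal{Z}}_{\mu_3}$ of (\ref{eq:big-class}) is built, via the projections of \S\ref{wildnormsetup} and Shapiro's lemma, out of the étale cohomology of modular curves having good reduction away from $Nmp$, and every arrow in the construction --- the degeneracy maps, the Hecke-algebra projections $\phi_\ast$ and $\nu_\ast$, the CM projections (\ref{eq:proj-gh}), the diagonal $e\circ w$ of Proposition \ref{ringclass}, and $\pi_f$ --- is a morphism of $G_{\Q,S}$-modules with $S=\{\ell\mid Nmp\}$. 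Hence $\mathbf{z}_{f,\chi,\mu_3}$ is unramified at every prime of $K_0[\mu_3p^\infty]$ not above $mp$; at the primes above $m$ it is again unramified, since $V_{f,\chi}$ is unramified there (as $(m,N_fN_{\psi_1}N_{\psi_2})=1$) and the local class is of geometric origin, exactly as in \cite{CD}. At $\cP_1$ the relaxed condition is vacuous.

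The heart of the matter is the behaviour at $\cP_2,\cP_3,\cP_4$, where the geometric origin of the classes at $p$ is decisive. Recall from \S\ref{subsec:prime-decomp} that $\cP_1\mid\pp_1\pp_2$, $\cP_2\mid\ppbar_1\ppbar_2$, $\cP_3\mid\ppbar_1\pp_2$ and $\cP_4\mid\pp_1\ppbar_2$. The construction feeds the $\bff$-variable through $\pi_f$, built from the $\Gamma_0(p)$-level structure --- the ordinary $U_p$-projection --- so that the $\bff$-component of $\mathrm{loc}_p$ carries the ordinary filtration $V_f^{\vee,+}\subset V_f^{\vee}$ of (\ref{subsec:p-ord}); and it feeds the $\bfg$- and $\bfh$-variables through the CM projections (\ref{eq:proj-gh}), whose targets are induced representations of the Hecke characters $(\ch_i\psi_{0,i})^{-1}$, which split at $p$ --- since $p$ splits in $K_i$ --- into a line ramified at $\pp_i$ and an unramified line at $\ppbar_i$. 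Combining these with the fact, supplied by the theory of diagonal cycles (\cite{BSV}; cf.\ also \cite{DR2}, invoked in \S\ref{subsec:tame}), that $\mathrm{loc}_p$ of the big diagonal class lands in the \emph{balanced} Greenberg subspace of the triple-product representation, one reads off --- after these projections, the descent along $e\circ w$, and Shapiro's lemma --- that $\mathrm{loc}_{\cP}(\mathbf{z}_{f,\chi,\mu_3})$ lies in the ordinary local condition, i.e.\ the $H^1_{\rm Iw}$-image of $\mathscr{F}_{\cP}^+(T_{f,\chi})=V_{f,\chi}^+$, for $\cP=\cP_3,\cP_4$, and in the strict local condition ($\mathscr{F}_{\cP_2}^+(T_{f,\chi})=0$) for $\cP_2$. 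Checking that the balanced subspace maps, prime by prime, onto exactly these pieces amounts to tracking the list ``which prime of $K_i$ lies under $\cP$'' together with the $f$-ordinary filtration of $V_f^{\vee}$ --- the same bookkeeping as in \cite[\S3]{CD}, now performed at four primes. Finally, passing from the big-class statement to the specialization $T_{f,\chi}$ preserves all these containments.

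I expect the only non-formal step to be the $p$-local analysis just sketched: one must propagate the balanced Greenberg structure of the triple-product Hida family through $\pi_f$ and (\ref{eq:proj-gh}) and through Shapiro's lemma, and match it against the four decomposition groups at $p$ in $K_0$ via the identification (\ref{eq:G_K0^-identification}) of the anticyclotomic $\Z_p^2$-tower. Everything else --- the unramified-outside-$p$ property, the vacuity of the relaxed condition at $\cP_1$, and the descent from the big Iwasawa class to $T_{f,\chi}$ --- is routine.
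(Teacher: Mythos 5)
At $p$ your plan is essentially the paper's: you both start from the fact that the diagonal class lands in the balanced Greenberg Selmer group (\cite[Cor.\,8.2]{BSV}, \cite[Sec.\,4.1]{CD}), then push the balanced filtration through the CM projections (\ref{eq:proj-gh}), the map $\pi_f$, and Shapiro's lemma, and read off, prime by prime over $\cP_1,\cP_2,\cP_3,\cP_4$, that the local condition becomes relaxed, strict, ordinary, ordinary. The paper just carries this out more explicitly, writing $\mathscr{F}^{\rm bal}_p(\Vsdag)$ as a three-term direct sum (cf.\ (\ref{eq:bal-def})) and listing $\mathscr{F}_{\cP_i}^{\rm bal}(\widetilde{\mathbf{V}}^\dagger_{Q_1}\vert_{G_{K_0}})$ for each $i$.

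Away from $p$, however, your argument has a genuine gap and the paper takes a different, more robust route. You argue unramifiedness by appealing to the ``geometric origin'' of the class as living in the cohomology of modular curves of level $Nmp$, concluding that $\mathbf{z}_{f,\chi,\mu_3}$ is unramified at every prime of $K_0[\mu_3p^\infty]$ away from $p$. This reasoning works fine outside $S=\{\ell\mid Nmp\}$, but at primes $w$ dividing $N_fN_{\psi_1}N_{\psi_2}$ the modular curves have bad reduction and ``geometric origin'' alone does not force the restriction to lie in the unramified subgroup; your sentence handling the primes above $m$ is a hand-wave, and the primes dividing $N$ are not addressed at all. The paper bypasses this entirely: since $V_{f,\chi}$ is conjugate self-dual and pure of weight $-1$, for every $w\nmid p$ one has $H^0(K_0[\mu_3\pp_3^r\ppbar_3^s]_w,V_{f,\chi})=H^2(\,\cdot\,,V_{f,\chi})=0$, hence $H^1(\,\cdot\,,V_{f,\chi})=0$ by Tate's local Euler characteristic formula. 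Consequently $H^1(\,\cdot\,,T_{f,\chi})$ is torsion and the propagated local condition $H^1_f(\,\cdot\,,T_{f,\chi})$ equals all of $H^1(\,\cdot\,,T_{f,\chi})$, so the off-$p$ local conditions are satisfied automatically by \emph{every} class, including $\mathbf{z}_{f,\chi,\mu_3}$, with no reference to its geometric construction. You should replace the unramifiedness-via-geometric-origin argument by this purity/self-duality argument (or else supply a genuine argument at the bad primes $w\mid N$).
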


\begin{proof}
By \cite[Cor.\,8.2]{BSV} and \cite[Sec.\,3.2]{CD}, the class $\mathbf{z}_{f,\chi,\mu_3}$ lands in the balanced Selmer group ${\rm Sel}^{\rm bal}(\Q,\Vdag)$, where the balanced local condition at $p$ upon specialised to $f$ is given by
\begin{equation}\label{eq:bal-def}
\begin{aligned}
\mathscr{F}^{\rm bal}_p(\Vsdag)
\simeq\bigl(T_f^{\vee}(1-r)\otimes\tilde{\ch}_1^{-1}\tilde{\Psi}_{T_1}\tilde{\ch}_2^{-1}\tilde{\Psi}_{T_2}\bigr)&\oplus\bigl(T_f^{\vee,+}(1-r)\otimes\tilde{\ch}_1^{-1}\tilde{\Psi}_{T_1}\tilde{\ch}_2^{-\cc}\tilde{\Psi}_{T_2}^{\cc}\bigr)\\
&\oplus\bigl(T_f^{\vee,+}(1-r)\otimes\tilde{\ch}_1^{-\cc}\tilde{\Psi}_{T_1}^{\cc}\tilde{\ch}_2^{-1}\tilde{\Psi}_{T_2}\bigr).
\end{aligned}
\end{equation}

Put $\widetilde{\mathbf{V}}_{Q_1}^\dagger=T_f^{\vee}(1-r)\otimes\tilde{\ch}_1^{-1}\tilde{\Psi}_{T_1}\tilde{\ch}_2^{-1}\tilde{\Psi}_{T_2}$ then Shapiro Lemma tells us that $H^1(\Q,\Vsdag)\simeq H^1(K_0,\widetilde{\mathbf{V}}_{Q_1}^\dagger)$, see (\ref{eq:shapiro}). Following \cite[Sec.\,5.3]{CD}, the local condition  $\mathscr{F}_p^{\rm bal}(\Vsdag)$ cutting out the specialised balanced Selmer group at $p$ corresponds to
\begin{equation}
\begin{split}
\mathscr{F}_{\cP_1}^{\rm bal}(\widetilde{\mathbf{V}}^\dagger_{Q_1}\vert_{G_{K_0}})&=T_f^\vee(1-k/2)\otimes\tilde{\ch}_1^{-1}\tilde{\Psi}_{T_1}\tilde{\ch}_2^{-1}\tilde{\Psi}_{T_2},\\
\mathscr{F}_{\cP_2}^{\rm bal}(\widetilde{\mathbf{V}}^\dagger_{Q_1}\vert_{G_{K_0}})&=0,\\
\mathscr{F}_{\cP_3}^{\rm bal}(\widetilde{\mathbf{V}}^\dagger_{Q_1}\vert_{G_{K_0}})&=T_f^{\vee,+}(1-k/2)\otimes\tilde{\ch}_1^{-\cc}\tilde{\Psi}_{T_1}^{\cc}\tilde{\ch}_2^{-1}\tilde{\Psi}_{T_2},\\
\mathscr{F}_{\cP_4}^{\rm bal}(\widetilde{\mathbf{V}}^\dagger_{Q_1}\vert_{G_{K_0}})&=T_f^{\vee,+}(1-k/2)\otimes\tilde{\ch}_1^{-1}\tilde{\Psi}_{T_1}\tilde{\ch}_2^{-\cc}\tilde{\Psi}_{T_2}^{\cc}.
\end{split}
\end{equation}
Hence the class $\mathbf{z}_{f,\chi,\mu_3}$ satisfies the relaxed-strict-ordinary-ordinary condition at the primes above $p$.

On the other hand, at the primes $w\nmid p$, because $V_{f,\chi}$ is conjugate self-dual and pure of weight $-1$, we see that 
\[
H^0(K_0[\mu_3\pp_3^r\ppbar_3^s]_w,V_{f,\chi})=H^2(K_0[\mu_3\pp_3^r\ppbar_3^s]_w,V_{f,\chi})=0
\]
for all $r,s$, and therefore $$H^1(K_0[\mu_3\pp_3^r\ppbar_3^s]_w,V_{f,\chi})=0$$ by Tate's local Euler characteristic formula. This implies the torsionness of  $H^1(K_0[\mu_3\pp_3^r\ppbar_3^s]_w,T_{f,\chi})$, and one has the following inclusion:
\[
{\rm res}_w(\mathbf{z}_{f,\chi,\mu_3})\in \varprojlim_{r,s} H^1_f(K_0[\mu_3\pp_3^r\ppbar_3^s]_w,T_{f,\chi}),
\]
which concludes the proof.
\end{proof}

\begin{prop}\label{prop:factor-S}
Via the isomorphism (\ref{eq:shapiro}), 
\begin{enumerate}
    \item the balanced Selmer group ${\rm Sel}^{\rm bal}(\Q,\Vsdag)$ can be rewritten as
\begin{align*}
{\rm Sel}^{\rm bal}(\Q,\Vsdag)&\simeq{\rm Sel}_{\relstr,\ord}(K_0,T_f^{\vee}(1-r)\otimes\tilde{\ch}_1^{-1}\tilde{\Psi}_{T_1}\tilde{\ch}_2^{-1}\tilde{\Psi}_{T_2}),
\end{align*}
\item the $\unb$-unbalanced Selmer group ${\rm Sel}^{\unb}(\Q,\Vsdag)$ can be rewritten as
\begin{align*}
{\rm Sel}^{\unb}(\Q,\Vsdag)&\simeq
{\rm Sel}_{\ord,\ord}(K_0,T_f^{\vee}(1-r)\otimes\tilde{\ch}_1^{-1}\tilde{\Psi}_{T_1}\tilde{\ch}_2^{-1}\tilde{\Psi}_{T_2}).\end{align*}
\item the $\bfh$-unbalanced Selmer group ${\rm Sel}^{\bfh}(\Q,\Vsdag)$ can be rewritten as
\begin{align*}
{\rm Sel}^{\bfh}(\Q,\Vsdag)&\simeq
{\rm Sel}_{\relstr,\relstr}(K_0,T_f^{\vee}(1-r)\otimes\tilde{\ch}_1^{-1}\tilde{\Psi}_{T_1}\tilde{\ch}_2^{-1}\tilde{\Psi}_{T_2}).\end{align*}
\end{enumerate}
\end{prop}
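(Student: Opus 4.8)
The plan is to read off all three isomorphisms from the Shapiro identification (\ref{eq:shapiro}) by matching local conditions, the only real content being local at $p$. First I would recall from (\ref{eq:dec-V-first}) that $\Vsdag\simeq{\rm Ind}_{K_0}^{\Q}W$ with $W=T_f^{\vee}(1-r)\otimes\tilde{\ch}_1^{-1}\tilde{\Psi}_{T_1}\tilde{\ch}_2^{-1}\tilde{\Psi}_{T_2}$, so that (\ref{eq:shapiro}) is the isomorphism on $\rH^1(\Q,-)$ induced by Shapiro's lemma; moreover Shapiro's lemma is compatible with the unramified local conditions, so that for $v\neq p$ the decomposition $\rH^1(\Q_v,\Vsdag)\simeq\bigoplus_{w\mid v}\rH^1(K_{0,w},W)$ carries $\ker\bigl(\rH^1(\Q_v,\Vsdag)\to\rH^1(\Q_v^{\rm nr},\Vsdag)\bigr)$ --- the condition imposed in Definition~\ref{def:Sel-bu} --- onto $\bigoplus_{w\mid v}H^1_{\rm ur}(K_{0,w},W)$, the local condition defining the Greenberg Selmer groups at $w\nmid p$. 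Hence the parts of all the Selmer groups away from $p$ agree automatically, and everything reduces to the condition at $p$.

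Next I would set up the local picture at $p$. Since $p$ splits completely in $K_0$ as $(p)=\cP_1\cP_2\cP_3\cP_4$ with $K_{0,\cP_j}\simeq\Q_p$, restriction to decomposition groups gives $\Vsdag\vert_{G_{\Q_p}}\simeq\bigoplus_{j=1}^{4}W\vert_{G_{K_{0,\cP_j}}}$, hence $\rH^1(\Q_p,\Vsdag)\simeq\bigoplus_{j=1}^{4}\rH^1(K_{0,\cP_j},W)$. On the Hida-family side, $p$ splits in $K_1$ and $K_2$, so each $V_{\bfg_i}\vert_{G_{\Q_p}}$ decomposes into its two local characters at the primes of $K_i$ above $p$; since $\tilde{\Psi}_{T_i}$ factors through the $\Z_p$-extension of $K_i$ unramified outside $\pp_i$ while $\ch_i$ is unramified at $p$, the unramified (hence ordinary) quotient $V_{\bfg_i}^{-}$ is the character at $\ppbar_i$, so $V_{\bfg_i}^{+}$ is the character at $\pp_i$. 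Using the labelling of Section~\ref{subsec:prime-decomp} --- $\cP_1$ above $(\pp_1,\pp_2)$, $\cP_2$ above $(\ppbar_1,\ppbar_2)$, $\cP_3$ above $(\ppbar_1,\pp_2)$, $\cP_4$ above $(\pp_1,\ppbar_2)$ --- the identification (\ref{eq:dec-V-first}) restricted to decomposition groups matches, corner by corner, the summand $V_\bff\otimes(V_{\bfg_1}\text{-character at }\epsilon_1)\otimes(V_{\bfg_2}\text{-character at }\epsilon_2)\otimes\mathcal{X}^{-1}$ with $W\vert_{G_{K_{0,\cP_j}}}$ at the corresponding prime, respecting $p$-ordinary filtrations; in particular $V_\bff^{+}$ corresponds there to the ordinary subspace $W^{+}=V_f^{\vee,+}(1-r)\otimes(\text{character})$ of Lemma~\ref{lem:BK-Gr}, cf.\ (\ref{subsec:p-ord}).

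Plugging in Definition~\ref{def:local-p} then gives the three cases. For (1), $\mathscr{F}_p^{\rm bal}(\Vsdag)=\bigl(V_\bff\otimes V_{\bfg_1}^{+}\otimes V_{\bfg_2}^{+}+V_\bff^{+}\otimes V_{\bfg_1}\otimes V_{\bfg_2}^{+}+V_\bff^{+}\otimes V_{\bfg_1}^{+}\otimes V_{\bfg_2}\bigr)\otimes\mathcal{X}^{-1}$ restricts to all of $W$ at $\cP_1$ (via the first summand, whose CM part sits at $(\pp_1,\pp_2)$), to $0$ at $\cP_2$, and to $W^{+}$ at $\cP_3$ and $\cP_4$ --- the relaxed/strict/ordinary/ordinary condition, which is exactly the local computation already carried out in the proof of Proposition~\ref{propselmer2} (following \cite[\S5.3]{CD}) --- so ${\rm Sel}^{\rm bal}(\Q,\Vsdag)\simeq{\rm Sel}_{\relstr,\ord}(K_0,W)$. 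For (2), $\mathscr{F}_p^{\bff}(\Vsdag)=V_\bff^{+}\otimes V_{\bfg_1}\otimes V_{\bfg_2}\otimes\mathcal{X}^{-1}$ restricts to $W^{+}$ at every $\cP_j$, i.e.\ the ordinary condition at all four primes above $p$, so ${\rm Sel}^{\bff}(\Q,\Vsdag)\simeq{\rm Sel}_{\ord,\ord}(K_0,W)$. For (3), $\mathscr{F}_p^{\bfh}(\Vsdag)=V_\bff\otimes V_{\bfg_1}\otimes V_{\bfg_2}^{+}\otimes\mathcal{X}^{-1}$ has CM part divisible by $V_{\bfg_2}^{+}$, the character at $\pp_2$, so it restricts to all of $W$ at the two primes above $\pp_2$, namely $\cP_1$ and $\cP_3$, and to $0$ at $\cP_2$ and $\cP_4$ --- the relaxed/strict/relaxed/strict condition --- so ${\rm Sel}^{\bfh}(\Q,\Vsdag)\simeq{\rm Sel}_{\relstr,\relstr}(K_0,W)$. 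In each case equality of the $p$-local subspaces upgrades to equality of local conditions via exactness of the local cohomology sequences, which identifies $H^1_{\mathscr{F}}(L_w,V)$ with ${\rm im}\bigl(H^1(L_w,\mathscr{F}_{\cP}^{+}(V))\to H^1(L_w,V)\bigr)$.

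I expect the main obstacle to be bookkeeping rather than mathematics: pinning down which local character of each CM Hida family is the $+$-part and threading this through the fixed labelling $\cP_1,\dots,\cP_4$, so that ``relaxed'' and ``strict'' land in the slots the statement predicts --- in particular keeping the conjugation ($(-)^{\cc}$) conventions straight, as in the explicit list of $\mathscr{F}_{\cP_j}^{\rm bal}$ appearing in the proof of Proposition~\ref{propselmer2}. This labelling is in any case forced by consistency with that computation and with the Hodge--Tate weight table of Lemma~\ref{lem:BK-Gr}, so the argument is essentially the biquadratic analogue of the corresponding statement in \cite{CD}, carried out one corner at a time.
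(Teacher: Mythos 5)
Your proposal is correct and follows essentially the same route as the paper: reduce to matching the local conditions at $p$ by decomposing $\Vsdag\vert_{G_{\Q_p}}$ over the four primes $\cP_1,\dots,\cP_4$ and reading off which piece of each $\mathscr{F}_p^\star$ lands in each summand, exactly as the paper does by citing the balanced computation in Proposition~\ref{propselmer2} and displaying the analogous decomposition~(\ref{eq:unbal-def}) of $\mathscr{F}_p^{\bff}(\Vsdag)$. Your labelling of the primes and identification of $V_{\bfg_i}^{\pm}$ with the $\pp_i$/$\ppbar_i$ characters agrees with the paper's conventions, so the bookkeeping in all three cases comes out as stated.
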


\begin{proof} For the balanced case, see Proposition \ref{propselmer2}. For the $\unb$-unbalanced case, note that 
\begin{equation}\label{eq:unbal-def}
\begin{aligned}
\mathscr{F}^{\bff}_p(\Vsdag)
\simeq\bigl(T_f^{\vee,+}(1-r)\otimes\tilde{\ch}_1^{-1}\tilde{\Psi}_{T_1}\tilde{\ch}_2^{-1}\tilde{\Psi}_{T_2}\bigr)&\oplus\bigl(T_f^{\vee,+}(1-r)\otimes\tilde{\ch}_1^{-1}\tilde{\Psi}_{T_1}\tilde{\ch}_2^{-\cc}\tilde{\Psi}_{T_2}^{\cc}\bigr)\\
\oplus\bigl(T_f^{\vee,+}(1-r)\otimes\tilde{\ch}_1^{-\cc}\tilde{\Psi}_{T_1}^{\cc}\tilde{\ch}_2^{-1}\tilde{\Psi}_{T_2}\bigr)&\oplus\bigl(T_f^{\vee,+}(1-r)\otimes\tilde{\ch}_1^{-\cc}\tilde{\Psi}_{T_1}^{\cc}\tilde{\ch}_2^{-\cc}\tilde{\Psi}_{T_2}^{\cc}\bigr).
\end{aligned}
\end{equation}
and the result follows. 

The $\bfh$-unbalanced case can be obtained in a similar manner where:
\begin{equation*}
    \begin{aligned}
\mathscr{F}^{\bfh}_p(\Vsdag)
\simeq\bigl(T_f^{\vee}(1-r)\otimes\tilde{\ch}_1^{-1}\tilde{\Psi}_{T_1}\tilde{\ch}_2^{-1}\tilde{\Psi}_{T_2}\bigr)
\oplus\bigl(T_f^{\vee}(1-r)\otimes\tilde{\ch}_1^{-\cc}\tilde{\Psi}_{T_1}^{\cc}\tilde{\ch}_2^{-1}\tilde{\Psi}_{T_2}\bigr).
\end{aligned}
\end{equation*}
\end{proof}

As a consequence, we also obtain the following isomorphisms for the Selmer groups with coefficients in $\mathbf{A}_{Q_1}^\dagger={\rm Hom}_{\Z_p}(\Vsdag,\mu_{p^\infty})$ by local Tate duality. Let $A_f(r)={\rm Hom}_{\Z_p}(T_f^\vee(1-r),\mu_{p^\infty})$.

\begin{cor}\label{cor:factor-S} We can identify
the balanced Selmer group ${\rm Sel}^{\rm bal}(\Q,\mathbf{A}_{Q_1}^\dagger)$ as
\begin{align*}
{\rm Sel}^{\rm bal}(\Q,\mathbf{A}_{Q_1}^\dagger)&\simeq{\rm Sel}_{\strrel,\ord}(K_0,A_f(r)\otimes\tilde{\ch}_1\tilde{\Psi}_{T_1}^{-1}\tilde{\ch}_2\tilde{\Psi}_{T_2}^{-1}),
\end{align*}
the $\unb$-unbalanced Selmer group ${\rm Sel}^{\unb}(\Q,\mathbf{A}_{Q_1}^\dagger)$ as
\begin{align*}
{\rm Sel}^{\unb}(\Q,\mathbf{A}_{Q_1}^\dagger)&\simeq
{\rm Sel}_{\ord,\ord}(K_0,A_f(r)\otimes\tilde{\ch}_1\tilde{\Psi}_{T_1}^{-1}\tilde{\ch}_2\tilde{\Psi}_{T_2}^{-1}),
\end{align*}
and the $\bfh$-unbalanced Selmer group ${\rm Sel}^{\bfh}(\Q,\mathbf{A}_{Q_1}^\dagger)$ as
\begin{align*}
{\rm Sel}^{\bfh}(\Q,\mathbf{A}_{Q_1}^\dagger)&\simeq
{\rm Sel}_{\strrel,\strrel}(K_0,A_f(r)\otimes\tilde{\ch}_1\tilde{\Psi}_{T_1}^{-1}\tilde{\ch}_2\tilde{\Psi}_{T_2}^{-1}).
\end{align*}
\end{cor}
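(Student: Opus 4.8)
The plan is to obtain Corollary \ref{cor:factor-S} from Proposition \ref{prop:factor-S} purely by dualising, so that all the real content—matching the triple product local conditions with Greenberg local conditions—is already available, and the only new ingredient is the behaviour of the situation under local Tate duality. First I would record that, applying ${\rm Hom}_{\Z_p}(-,\mu_{p^\infty})$ to the decomposition (\ref{eq:dec-V-first}) and using $A_f(r)={\rm Hom}_{\Z_p}(T_f^{\vee}(1-r),\mu_{p^\infty})$, one gets a $G_\Q$-equivariant isomorphism
\[
\mathbf{A}_{Q_1}^\dagger\simeq{\rm Ind}_{K_0}^\Q\bigl(A_f(r)\otimes\tilde{\ch}_1\tilde{\Psi}_{T_1}^{-1}\tilde{\ch}_2\tilde{\Psi}_{T_2}^{-1}\bigr)
\]
(using that induction and coinduction agree for the finite extension $K_0/\Q$), hence by Shapiro's lemma an isomorphism $\rH^1(\Q,\mathbf{A}_{Q_1}^\dagger)\simeq\rH^1\bigl(K_0,A_f(r)\otimes\tilde{\ch}_1\tilde{\Psi}_{T_1}^{-1}\tilde{\ch}_2\tilde{\Psi}_{T_2}^{-1}\bigr)$, compatible with the one in (\ref{eq:shapiro}) for $\Vsdag$ under local Tate duality at every place; in particular the pairing $\rH^1(\Q_p,\Vsdag)\times\rH^1(\Q_p,\mathbf{A}_{Q_1}^\dagger)\to\Q_p/\Z_p$ becomes the orthogonal direct sum, over the primes $\cP\in\{\cP_1,\cP_2,\cP_3,\cP_4\}$ of $K_0$ above $p$, of the local pairings $\rH^1(K_{0,\cP},-)\times\rH^1(K_{0,\cP},-)\to\Q_p/\Z_p$.

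Granting this, the proof proceeds prime by prime. By definition $\rH^1_\star(\Q_p,\mathbf{A}_{Q_1}^\dagger)$ is the annihilator of $\rH^1_\star(\Q_p,\Vsdag)$, and at $v\nmid p$ the unramified condition is its own annihilator, so those conditions match automatically. For the balanced case, Proposition \ref{prop:factor-S} (via (\ref{eq:bal-def}), as established in Proposition \ref{propselmer2}) tells us $\mathscr{F}_p^{\rm bal}(\Vsdag)$ corresponds under (\ref{eq:shapiro}) to the Greenberg conditions $(\mathrm{rel},\mathrm{str},\mathrm{ord},\mathrm{ord})$ at $(\cP_1,\cP_2,\cP_3,\cP_4)$. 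Then I would invoke the local duality computation already recorded in the two bullet points after Lemma \ref{lem:BK-Gr}—that under local Tate duality the annihilator of the relaxed condition is the strict condition, the annihilator of the strict condition is the relaxed condition, and the annihilator of the ordinary condition $\mathscr{F}_{\cP}^+(T_{f,\chi})$ is the ordinary condition ${\rm Hom}_{\Z_p}(\mathscr{F}_{\cP}^-(T_{f,\chi}),\mu_{p^\infty})$—to conclude that the dual local condition is $(\mathrm{str},\mathrm{rel},\mathrm{ord},\mathrm{ord})$, i.e. ${\rm Sel}^{\rm bal}(\Q,\mathbf{A}_{Q_1}^\dagger)\simeq{\rm Sel}_{\strrel,\ord}(K_0,A_f(r)\otimes\tilde{\ch}_1\tilde{\Psi}_{T_1}^{-1}\tilde{\ch}_2\tilde{\Psi}_{T_2}^{-1})$. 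The $\unb$-unbalanced and $\bfh$-unbalanced cases run identically: by (\ref{eq:unbal-def}) the subspace $\mathscr{F}_p^{\bff}(\Vsdag)$ corresponds to $(\mathrm{ord},\mathrm{ord},\mathrm{ord},\mathrm{ord})$, which is self-dual and yields ${\rm Sel}_{\ord,\ord}$; while Proposition \ref{prop:factor-S} identifies the $\bfh$-datum with $(\mathrm{rel},\mathrm{str},\mathrm{rel},\mathrm{str})$, whose dual is $(\mathrm{str},\mathrm{rel},\mathrm{str},\mathrm{rel})$, giving ${\rm Sel}_{\strrel,\strrel}$.

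The main obstacle—indeed essentially the only point that is not bookkeeping—is the compatibility of the Shapiro isomorphism with local Tate duality at $p$ asserted in the first paragraph: one must check, using Mackey's formula $\mathbf{A}_{Q_1}^\dagger|_{G_{\Q_p}}\simeq\bigoplus_{\cP\mid p}{\rm Ind}_{K_{0,\cP}}^{\Q_p}(-)$ together with the self-duality of the induced module, that the global local pairing restricts to the orthogonal direct sum of the natural pairings on the induced local pieces, so that annihilators may legitimately be computed componentwise over the $\cP_i$. This is standard but slightly fiddly; once it is settled the corollary is a formal consequence of Proposition \ref{prop:factor-S} and the duality computations for Greenberg local conditions already carried out after Lemma \ref{lem:BK-Gr}.
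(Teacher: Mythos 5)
Your proposal is correct and follows exactly the route the paper intends: the paper states the corollary as a one-line consequence of Proposition~\ref{prop:factor-S} ``by local Tate duality,'' and you have unpacked precisely what is left implicit there---dualising (\ref{eq:dec-V-first}), noting the compatibility of Shapiro's lemma with local Tate duality, and computing the annihilators of the Greenberg local conditions at each $\cP_i$ using the bullet points after Lemma~\ref{lem:BK-Gr}.
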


\subsection{Applying the general machinery} 
We show some arithmetic applications by invoking the general Euler system machinery of Jetchev--Nekov\'a{\v r}--Skinner  \cite{JNS}, see some details for the imaginary quadratic case in \cite[\S{4.3}]{Do-PhD} and \cite[\S{8}]{ACR}. These results will be used to deduce the Bloch--Kato conjecture and the anticyclotomic Iwasawa main conjecture by exploiting the relation between our Euler system classes and special values of complex and $p$-adic $L$-functions via an explicit reciprocity law.

For every ideal $\mu_3\in \mathcal{N}$, denote by
\[
z_{f,\chi,\mu_3}\in{\rm Sel}_{\relstr,\ord}(K_0[\mu_3],T_{f,\chi})
\]
the image of $\mathbf{z}_{f,\chi,\mu_3}$ from Theorem\,\ref{maintheorem2} under the projection
\begin{equation}\label{eq:Sel_projection_from_K_infty}
    {\rm Sel}_{\relstr,\ord}(K_0[\mu_3p^\infty],T_{f,\chi})\rightarrow{\rm Sel}_{\relstr,\ord}(K_0[\mu_3],T_{f,\chi}).
\end{equation}
And denote the base class
\[
z_{f,\chi}:={\rm Norm}^{K_0[1]}_{K_0}(z_{f,\chi,1})\in{\rm Sel}_{\relstr,\ord}(K_0,T_{f,\chi}).
\]
Note that since we assume $p \nmid h_{K_0}$, $K_0[1]$ is actually the same with $K_0$ (recall that $K_0[\fkn]$ is the maximal $p$-extension inside the ring class field of $K_0$ of conductor $\fkn$). Therefore, $z_{f,\chi}=z_{f,\chi,1}$.

\begin{thm}\label{thm:rank-1-general}
Assume that $f$ is not of CM-type, non-Eisenstein at $\mathfrak{P}$, and that $p\nmid 6h_{K_0}$. One has: 
$$z_{f,\chi} \text{ is non-torsion } \quad\Rightarrow\quad{\rm Sel}_{\relstr,\ord}(K_0,V_{f,\chi}) \text{ is one-dimensional.}$$

\end{thm}

\begin{proof}
 Combining Theorem~\ref{maintheorem2} and Proposition~\ref{propselmer2}, the system of classes 
\begin{equation}\label{eq:ES-rel-str}
\bigl\{z_{f,\chi,\mu_3}\in{\rm Sel}_{\relstr,\ord}(K_0[\mu_3],T_{f,\chi})\;\colon\;\mu_3\in\mathcal{N}\bigr\}
\end{equation}
forms an anticyclotomic Euler system in the sense of  Jetchev--Nekov\'a{\v r}--Skinner \cite{JNS} for the relaxed-strict-ordinary-ordinary Greenberg Selmer group. 

Under the assumption that $f$ is not of CM-type, the following properties (i)--(iii) follow from Momose's big image results \cite{Momo} as in \cite[Prop.\,7.1.4]{LLZ-K}:
\begin{itemize}
    \item[(i)] $V_{f,\chi}$ is absolutely irreducible;
    \item[(ii)] There is an element $\sigma\in G_{K_0}$ fixing $K_0[1]K_0(\mu_{p^\infty},(\cO_{K_0}^\times)^{1/p^\infty})$ such that $V_{f,\chi}/(\sigma-1)V_{f,\chi}$ is one-dimensional;
    \item[(iii)] There is an element $\gamma\in G_{K_0}$ fixing $K_0[1]K_0(\mu_{p^\infty},(\cO_{K_0}^\times)^{1/p^\infty})$ such that $V_{f,\chi}^{\gamma=1}=0$.
\end{itemize}
Hence, the fact that $z_{f,\chi}$ is non-torsion implies the one-dimensionality of ${\rm Sel}_{\relstr,\ord}(K_0,V_{f,\chi})$ by the general machinery of \cite{JNS}.
\end{proof}

Recall that $K_{0,\infty}^-$ is the anticyclotomic $\Z_p^2$ extension over $K_0$ and $\Lambda_{K_0}^-=\Z_p\dBr{{\rm Gal}(K_{0,\infty}^-/K_0)}$. 
Let $\mathbf{z}_{f,\chi,1}$ be the $\Lambda_{K_0}^-$-adic class of Theorem\,\ref{maintheorem2} of conductor $\mu_3=(1)$, and put the Iwasawa-theoretic base class
\[
\mathbf{z}_{f,\chi}:={\rm Norm}^{K_0[1]}_{K_0}(\mathbf{z}_{f,\chi,(1)}).
\]
Again $\mathbf{z}_{f,\chi}=\mathbf{z}_{f,\chi,(1)}$ from the assumption on the class number of $K_0$. Note that by Proposition \ref{propselmer2}, one has
\[
\mathbf{z}_{f,\chi}\in{\rm Sel}_{\relstr,\ord}(K_{0,\infty}^-,T_{f,\chi}).
\]

\begin{defn}\label{notation-bigimage}We say that $f$ {has big image at $\mathfrak{P}$} if the image of $G_\Q$ in ${\rm Aut}_{\cO}(T_f^\vee)$ contains a conjugate of ${\rm SL}_2(\Z_p)$.
\end{defn}

\begin{rem}By a theorem of Ribet \cite{Ribet-glasgow}, if $f$ is not of CM-type then it has big image for all but finitely many primes of $L$.
\end{rem}

Denote by 
\[
X_{\strrel,\ord}(K_{0,\infty}^-,A_{f,\chi})={\rm Hom}_{\Z_p}\bigl(\varinjlim{\rm Sel}_{\strrel,\ord}(K_0[\mathfrak{p}_3^r\bar{\mathfrak{p}}_3^s],A_{f,\chi}),\Q_p/\Z_p\bigr).
\]

One then has a divisibility towards an anticyclotomic Iwasawa main conjecture `without $p$-adic $L$-functions' as follows:
\begin{thm}\label{thm:IMC-general}
Assume that $f$ is not of CM-type, has big image at $\mathfrak{P}$, and that $p\nmid 6h_{K_0}$. 
If $\mathbf{z}_{f,\chi}$ is non-torsion, then: 
\begin{enumerate}
    \item $X_{\strrel,\ord}(K_{0,\infty}^-,A_{f,\chi})$ and ${\rm Sel}_{\relstr,\ord}(K_{0,\infty}^-,T_{f,\chi})$ both have $\Lambda_{K_0}^-$-rank one.
\item And we have the divisibility
\[
{\rm char}_{\Lambda_{K_0}^-}(X_{\strrel,\ord}(K_{0,\infty}^-,A_{f,\chi})_{\rm tors})\supset{\rm char}_{\Lambda_{K_0}^-}\biggl(\frac{{\rm Sel}_{\relstr,\ord}(K_{0,\infty}^-,T_{f,\chi})}{\Lambda_{K_0}^-\cdot\mathbf{z}_{f,\chi}}\biggr)^2
\]
in $\Lambda_{K_0}^-$. 
\end{enumerate}
Here, the subscript ${\rm tors}$ denotes the $\Lambda_{K_0}^-$-torsion submodule.
\end{thm}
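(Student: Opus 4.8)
The plan is to deduce both assertions from the Iwasawa-theoretic output of the Jetchev--Nekov\'a{\v r}--Skinner Euler system machinery \cite{JNS}, applied to the collection $\{z_{f,\chi,\mu_3}\}_{\mu_3\in\mathcal{N}}$. As already observed in the proof of Theorem~\ref{thm:rank-1-general}, Theorem~\ref{maintheorem2} furnishes the norm relations and Proposition~\ref{propselmer2} places the classes in ${\rm Sel}_{\relstr,\ord}$, so this collection is an anticyclotomic Euler system in the sense of \cite{JNS} for the relaxed-strict-ordinary-ordinary Greenberg Selmer structure; the hypotheses $p\nmid 6h_{K_0}$ and $f$ non-Eisenstein modulo $\mathfrak{P}$ (the latter being a consequence of big image) ensure that the classes are defined and that the local conditions propagate compatibly up the anticyclotomic tower. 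By the computation following Lemma~\ref{lem:BK-Gr}, the dual Selmer structure of ${\rm Sel}_{\relstr,\ord}$ is exactly ${\rm Sel}_{\strrel,\ord}$, so $X_{\strrel,\ord}(K_{0,\infty}^-,A_{f,\chi})$ is by definition the Pontryagin dual of the associated direct limit of dual Selmer groups, and we are in the precise input format of \cite{JNS}.

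Next I would verify the Galois-theoretic hypotheses of the $\Lambda_{K_0}^-$-adic theorem of \cite{JNS}. The key point is that the big image hypothesis at $\mathfrak{P}$ --- the image of $G_\Q$ in ${\rm Aut}_\cO(T_f^\vee)$ containing a conjugate of ${\rm SL}_2(\Z_p)$ --- forces the image of $G_{K_0}$, and in fact of $G_{K_{0,\infty}^-}$, in ${\rm Aut}_\cO(T_{f,\chi})$ to be large: one obtains $H^0(K_{0,\infty}^-,A_{f,\chi})=0$, the existence of $\sigma,\gamma\in G_{K_{0,\infty}^-}$ fixing $K_0[1]K_0(\mu_{p^\infty},(\cO_{K_0}^\times)^{1/p^\infty})$ with $A_{f,\chi}/(\sigma-1)A_{f,\chi}$ cyclic and $A_{f,\chi}^{\gamma=1}=0$, together with the self-duality of the Selmer structure that makes the error terms in \cite{JNS} trivial. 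These are the upgrades of properties (i)--(iii) in the proof of Theorem~\ref{thm:rank-1-general} from $G_{K_0}$ to the whole tower, and they are exactly what guarantees that the associated $\Lambda_{K_0}^-$-adic Kolyvagin system module is free of rank one.

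Finally I would feed the non-torsion hypothesis on $\mathbf{z}_{f,\chi}$ into this structure. Since $\mathbf{z}_{f,\chi}$ is non-torsion it generates a non-trivial $\Lambda_{K_0}^-$-adic Kolyvagin system, and the structure theorem of \cite{JNS} then gives simultaneously that ${\rm Sel}_{\relstr,\ord}(K_{0,\infty}^-,T_{f,\chi})$ has $\Lambda_{K_0}^-$-rank one, that $X_{\strrel,\ord}(K_{0,\infty}^-,A_{f,\chi})$ has $\Lambda_{K_0}^-$-rank one, and the divisibility
\[
{\rm char}_{\Lambda_{K_0}^-}\bigl(X_{\strrel,\ord}(K_{0,\infty}^-,A_{f,\chi})_{\rm tors}\bigr)\supset{\rm char}_{\Lambda_{K_0}^-}\biggl(\frac{{\rm Sel}_{\relstr,\ord}(K_{0,\infty}^-,T_{f,\chi})}{\Lambda_{K_0}^-\cdot\mathbf{z}_{f,\chi}}\biggr)^2;
\]
the square on the right is intrinsic to the anticyclotomic formalism, reflecting that the method controls the torsion of the dual Selmer module only up to the square of the index of the Euler system class, exactly as in the classical Kolyvagin argument. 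The main obstacle I expect is not conceptual but a careful matching of conventions: one must reconcile the balanced local condition at $p$ coming from \cite{BSV} (as recalled in the proof of Proposition~\ref{propselmer2}) with the Panchishkin-type self-dual Selmer structure for which \cite{JNS} is formulated, and check that the big image hypothesis genuinely upgrades properties (i)--(iii) to hold over $K_{0,\infty}^-$ rather than merely over $K_0$, the latter being what sufficed for Theorem~\ref{thm:rank-1-general}.
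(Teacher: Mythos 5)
Your proposal follows essentially the same route as the paper: recognize the collection of classes as a $\Lambda_{K_0}^-$-adic anticyclotomic Euler system via Theorem~\ref{maintheorem2} and Proposition~\ref{propselmer2}, verify the Galois-theoretic hypotheses of the Jetchev--Nekov\'a{\v r}--Skinner machinery using the big image assumption (the paper cites \cite[Prop.\,7.1.6]{LLZ-K} for the explicit properties (i)--(iii), which you describe in slightly different but equivalent terms), and feed the non-torsionness of $\mathbf{z}_{f,\chi}$ into the structure theorem to get both the rank-one statement and the divisibility.
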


\begin{proof}
Combining Theorem\,\ref{maintheorem2} and Proposition\,\ref{propselmer2}, the system of classes
\begin{equation}\label{eq:ES-Lambda}
\bigl\{\mathbf{z}_{f,\chi,\mu_3}\in{\rm Sel}_{\relstr,\ord}(K_0[\mu_3p^\infty],T_{f,\chi})\,\colon\;\mu_3\in\mathcal{N}\bigr\}
\end{equation}
forms a $\Lambda_{K_0}^-$-adic anticyclotomic Euler system in the sense of Jetchev--Nekov\'a{\v r}--Skinner for the relaxed-strict-ordinary-ordinary Selmer group.

Under the assumption that $f$ has big image at $\mathfrak{P}$, the following properties hold (see \cite[Prop.\,7.1.6]{LLZ-K})
\begin{itemize}
    \item[(i)] $\bar{T}_{f,\chi}:=T_{f,\chi}/\mathfrak{P}T_{f,\chi}$ is absolutely irreducible;
    \item[(ii)] There is an element $\sigma\in G_K$ fixing $K_0[1]K_0(\mu_{p^\infty},(\cO_K^\times)^{1/p^\infty})$ such that $T_{f,\chi}/(\sigma-1)T_{f,\chi}$ is free of rank $1$ over $\cO$;
    \item[(iii)] There is an element $\gamma\in G_{K_0}$ fixing $K_0[1]K_0(\mu_{p^\infty},(\cO_K^\times)^{1/p^\infty})$ and acting as multiplication by a scalar $a_\gamma\neq 1$ on $\bar{T}_{f,\chi}$;
\end{itemize}
and so the non-torsionness of $\mathbf{z}_{f,\chi}$ implies the conclusions by the general machinery of \cite{JNS}.
\end{proof}

\subsection{On the Bloch--Kato conjecture in rank $0$}
\label{subsec:BK-def}

Our first application is the Bloch--Kato conjecture in analytic rank zero for the conjugate self-dual $G_{K_0}$-representation $V_{f,\chi}=V_f^\vee(1-r)\otimes\chi^{-1}$.

\begin{assumption}\label{eq:chi-decomp}
    We assume that the anticyclotomic Hecke character $\chi$ over $K_0$ can be decomposed as: 
    \begin{equation*}
\chi=\tilde{\psi}_1\tilde{\psi}_2\mathbf{N}^{(k_1+k_2-2)/2},
    \end{equation*}
    where
    \begin{enumerate}
     \item $\psi_1$ is a Hecke character of $K_1$ of infinity type $(1-k_1,0)$, with $k_1\geq 1$, and modulus  $\fkf_1$.
    \item  $\psi_2$ is a Hecke character of $K_2$ of infinity type $(1-k_2,0)$, with $k_2\geq 1$ and modulus  $\fkf_2$.
    \item $\tilde{\psi}_i$ is the Hecke character of $K_0$, obtained by composing $\mathbb{A}_{K_0}^{\times}\xrightarrow{\mathbb{N}_{K_0/K_i}}\mathbb{A}_{K_i}^{\times}\xrightarrow{\psi_i}\mathbb{C}$ for each $i\in\{1,2\}$.
    \item By swapping $K_1$ and $K_2$, we may assume that ${k_2\ge k_1}$.
    \end{enumerate}
In this scenario, the infinity type of $\chi$ (corresponding to the order $(\cP_1,\cP_2,\cP_3,\cP_4)$ or $(1,\tau_3,\tau_2,\tau_1)$) is
\[\left(\frac{2-k_1-k_2}{2},\frac{k_1+k_2-2}{2},\frac{k_1-k_2}{2},\frac{k_2-k_1}{2}\right).\] 
\end{assumption}

\begin{thm}\label{thm:BK-def}
Let $f\in S_k(\Gamma_0(pN_f))$ be a $p$-ordinary $p$-stabilised newform of weight $k=2r\geq 2$ which is old at $p$. Let $\chi$ be an anticyclotomic Hecke character of $K_0$ as in (\ref{eq:chi-decomp}). Assume that:
\begin{enumerate}
    \item Either $k\ge k_1+k_2$ or $k_2-k_1\ge k$; 
    \item $N_f\cO_{K_3}=\mathfrak{n}^+\mathfrak{n}^-$ where $\fkn^+$
(respectively $\fkn^-$) is divisible only by primes which are split (respectively inert) in $K_0/K_3$ and $\fkn^-$ is a squarefree product of an even number of primes.
    \item $\bar{\rho}_f$ is absolutely irreducible; 
    \item $(pN_f,\mathrm{Norm}_{K_1/\Q}(\mathfrak{f}_1)\mathrm{Norm}_{K_2/\Q}(\mathfrak{f}_2)D_{K_0})=1$;
    \item $p\nmid 6h_{K_0}$, the class number of $K_0$;
\end{enumerate}
then we have the following implication
\[
L(f/K_0,\chi,r)\neq 0\quad\Longrightarrow\quad{\rm Sel}_{\rm BK}(K_0,V_{f,\chi})=0.
\] 
In other words, the Bloch--Kato conjecture holds in analytic rank zero for $V_{f,\chi}$.
\end{thm}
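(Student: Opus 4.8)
The plan is to combine the anticyclotomic Euler system of Theorem~\ref{maintheorem2} with the explicit reciprocity law for diagonal classes (Theorem~\ref{thm:ERL}) and Hsieh's interpolation formula for the triple product $p$-adic $L$-function (Theorem~\ref{thm:hsieh-triple}). I spell out the case $k\geq k_1+k_2$, i.e. $k\geq 2a+2$ with $a=\tfrac{k_1+k_2-2}{2}$; the case $k_2-k_1\geq k$ is symmetric, interchanging the roles of the $\bff$- and $\bfh$-unbalanced data and invoking part~(3) of Proposition~\ref{prop:factor-S} in place of part~(2). By Lemma~\ref{lem:BK-Gr}, $k\geq 2a+2$ gives ${\rm Sel}_{\rm BK}(K_0,V_{f,\chi})={\rm Sel}_{\ord,\ord}(K_0,V_{f,\chi})$, so it suffices to show that this group vanishes. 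Let $\bfg_1=\boldsymbol{\theta}_{\ch_1}(S_1)$ and $\bfg_2=\boldsymbol{\theta}_{\ch_2}(S_2)$ be the CM Hida families through $\psi_1,\psi_2$ and let $\bff$ be the Hida family through $f$; conditions~(3),~(4) and the self-duality~(\ref{eq:self-dual}) forced by~(\ref{eq:chi-decomp}) put $(\bff,\bfg_1,\bfg_2)$ in the setting of Theorem~\ref{thm:hsieh-triple}, condition~(2) is exactly the generic local-sign requirement~(\ref{eq:+1}) at the primes dividing $N_f$, and condition~(1) guarantees that the classical point $\underline Q$ of weight $(k,k_1,k_2)$ with trivial wild character lies in $\mathfrak{X}_{\mathcal R}^\bff$.

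First I would propagate the non-vanishing of the complex central value to the $p$-adic side. Using the Artin-formalism identity ${\rm Ind}_{K_1}^{\Q}\psi_1\otimes{\rm Ind}_{K_2}^{\Q}\psi_2={\rm Ind}_{K_0}^{\Q}(\tilde\psi_1\tilde\psi_2)$, which identifies $L(\VQdag,0)$ with a nonzero multiple of $L(f/K_0,\chi,r)$, Theorem~\ref{thm:hsieh-triple} shows that once the $\Gamma$-factor, the Hida canonical period $\Omega_{\bff_{Q_1}}$, the modified $p$-Euler factor $\mathcal{E}_p(\mathscr{F}_p^\bff(\VQdag))$ and the exceptional product are all nonzero at $\underline Q$, the hypothesis $L(f/K_0,\chi,r)\neq 0$ forces $\mathscr{L}_p^{\unb,\eta_\bff}(f,\bfg_1,\bfg_2)\neq 0$ in $\cO\dBr{S_1,S_2}$. (The first two factors are nonzero because $\underline Q$ is in the range of interpolation; $\mathcal{E}_p$ is nonzero at $\underline Q$ outside an explicitly describable exceptional locus disjoint from our situation; and one assumes, if necessary, that $\bar\rho_f$ is $p$-distinguished so that the congruence ideal of $\bff$ is principal.) Then, applying the explicit reciprocity law of Theorem~\ref{thm:ERL} to $(\bff,\bfg_1,\bfg_2)$ together with the injectivity of ${\rm Log}^{\eta_\bff}$, I get ${\rm res}_p(\kappa(\bff,\bfg_1,\bfg_2))_\bff\neq 0$; specialising at $\underline Q$, the base class $z_{f,\chi}$ of our Euler system — which by the construction of Section~\ref{wildnormsetup} arises from $\kappa(\bff,\bfg_1,\bfg_2)$ by specialisation, projection and corestriction — has nonzero image under ${\rm res}_{\cP_1}$ followed by the projection $H^1(K_{0,\cP_1},V_{f,\chi})\to H^1(K_{0,\cP_1},V_{f,\chi}^-)$, where $V_{f,\chi}^-=V_{f,\chi}/V_{f,\chi}^+$. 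In particular $z_{f,\chi}\neq 0$ and $z_{f,\chi}$ fails the ordinary local condition at $\cP_1$.

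Since $\{z_{f,\chi,\mu_3}\}_{\mu_3\in\mathcal N}$ is a Jetchev--Nekov\'a{\v r}--Skinner anticyclotomic Euler system for the relaxed-strict-ordinary-ordinary Greenberg Selmer structure (Theorem~\ref{maintheorem2} and Proposition~\ref{propselmer2}) whose base class is nonzero, Theorem~\ref{thm:rank-1-general} gives ${\rm Sel}_{\relstr,\ord}(K_0,V_{f,\chi})=L_{\mathfrak P}\cdot z_{f,\chi}$, one-dimensional. I would then conclude by a Poitou--Tate comparison of Greenberg Selmer structures. Let $\mathcal{F}_\cap$ be the structure that is ordinary at $\cP_1,\cP_3,\cP_4$ and strict at $\cP_2$, and $\mathcal{F}_\cup$ the structure that is ordinary at $\cP_2,\cP_3,\cP_4$ and relaxed at $\cP_1$; then $\mathcal{F}_\cap\subseteq{\rm Sel}_{\relstr,\ord}\subseteq\mathcal{F}_\cup$ and $\mathcal{F}_\cap\subseteq{\rm Sel}_{\ord,\ord}\subseteq\mathcal{F}_\cup$, while by conjugate self-duality $\mathcal{F}_\cap$ and $\mathcal{F}_\cup$ are Tate-dual to each other and both ${\rm Sel}_{\relstr,\ord}$ and ${\rm Sel}_{\ord,\ord}$ are self-dual. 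Because $z_{f,\chi}$ spans ${\rm Sel}_{\relstr,\ord}(K_0,V_{f,\chi})$ but is not ordinary at $\cP_1$, we get $\mathcal{F}_\cap(K_0,V_{f,\chi})=0$; the global duality exact sequence for ${\rm Sel}_{\relstr,\ord}\subseteq\mathcal{F}_\cup$ then forces $\mathcal{F}_\cup(K_0,V_{f,\chi})={\rm Sel}_{\relstr,\ord}(K_0,V_{f,\chi})=L_{\mathfrak P}\cdot z_{f,\chi}$. Finally, the global duality exact sequence for ${\rm Sel}_{\ord,\ord}\subseteq\mathcal{F}_\cup$ identifies ${\rm Sel}_{\ord,\ord}(K_0,V_{f,\chi})$ with the kernel of the localisation map $\mathcal{F}_\cup(K_0,V_{f,\chi})\to H^1(K_{0,\cP_1},V_{f,\chi})/{\rm im}\bigl(H^1(K_{0,\cP_1},V_{f,\chi}^+)\bigr)\hookrightarrow H^1(K_{0,\cP_1},V_{f,\chi}^-)$, and this map sends the generator $z_{f,\chi}$ to the nonzero class found above, so the kernel is zero and ${\rm Sel}_{\rm BK}(K_0,V_{f,\chi})=0$. (At the boundary weight $k=2a+2$ one checks first, from the absolute irreducibility of $\bar\rho_f$ and the genericity of $\psi_1,\psi_2$, that the relevant local $H^0$ and $H^2$ at the primes above $p$ vanish, so that the local terms in these sequences are one-dimensional as used.)

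The main obstacle is the second step: matching $L(f/K_0,\chi,r)$ with $L(\VQdag,0)$ including the archimedean $\Gamma$-factors and the local Euler factors at the primes dividing $N_fN_{\psi_1}N_{\psi_2}$, and verifying that the modified $p$-Euler factor $\mathcal{E}_p$ and the Hida period do not vanish at the interpolation point $\underline Q$. Once $\mathscr{L}_p^{\unb,\eta_\bff}(f,\bfg_1,\bfg_2)\neq 0$ is established, the remaining ingredients — the reciprocity law, the Jetchev--Nekov\'a{\v r}--Skinner machinery, and the Poitou--Tate bookkeeping — are essentially formal.
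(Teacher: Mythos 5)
Your proposal follows the paper's proof essentially step for step: realizing the base class $z_{f,\chi}$ as the specialization $\kappa(\bff,\bfg_1,\bfg_2)_{\mathcal Q_1}$ of the big diagonal class, matching $L(\VQdag,0)$ with $L(f/K_0,\chi,r)$ via the CM decomposition (\ref{eq:dec-V-first}), propagating the nonvanishing to ${\rm res}_p(\kappa)_\bff$ through Hsieh's interpolation formula and the reciprocity law of Theorem~\ref{thm:ERL}, invoking Theorem~\ref{thm:rank-1-general} to get the one-dimensionality of ${\rm Sel}_{\relstr,\ord}(K_0,V_{f,\chi})$, and then killing ${\rm Sel}_{\ord,\ord}$ by Poitou--Tate before identifying it with ${\rm Sel}_{\rm BK}$ via Lemma~\ref{lem:BK-Gr}. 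Your explicit $\mathcal F_\cap/\mathcal F_\cup$ bookkeeping is a reasonable unpacking of the Poitou--Tate step the paper defers to \cite[\S 5.1.1]{Do-PhD}; just be careful that the asserted duality between $\mathcal F_\cap$ and $\mathcal F_\cup$ uses the conjugate self-duality of $V_{f,\chi}$ (which swaps $\cP_1\leftrightarrow\cP_2$ and $\cP_3\leftrightarrow\cP_4$) rather than naive prime-by-prime Tate duality, since naively the dual of $\mathcal F_\cap$ would have the relaxed condition sitting at $\cP_2$ rather than $\cP_1$.
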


\begin{proof}
We consider the CM Hida families
\[
\bfg=\boldsymbol{\theta}_{\ch_1}(S_1),\quad\bfh=\boldsymbol{\theta}_{\ch_2}(S_2),
\]
that pass through $\theta_{\psi_1}$ and $\theta_{\psi_2}$ respectively. Note that the triple $(\bff,\bfg_1,\bfg_2)$ satisfies (\ref{eq:+1}). Then the isomorphism (\ref{eq:dec-V-first}) of the associated $\Vsdag$ together with the specialization ${\mathcal{Q}_1}$ corresponding to $\theta_{\psi_1}$ and $\theta_{\psi_2}$ show that
\[
L(\mathbb{V}_{\mathcal{Q}_1}^\dag,0)=L(f/K_0,\chi,r).
\]

By Theorem\,\ref{thm:ERL} we then have
\[
L(f/K_0,\chi,r)\neq 0\quad\Longrightarrow\quad{\rm res}_p(\kappa(\bff,\bfg,\bfh))_{\bff,{\mathcal{Q}_1}}\neq 0.
\]
From our construction and Proposition \ref{prop:factor-S}, the class $\kappa(\bff,\bfg,\bfh)_{\mathcal{Q}_1}\in{\rm Sel}_{\relstr,\ord}(K_0,V_{f,\chi})$ is the base class of the anticyclotomic Euler system 
\[
\bigl\{z_{f,\chi,\mu_3}\in{\rm Sel}_{\relstr,\ord}(K_0[m],T_{f,\chi})\;\colon\;\mu_3\in\mathcal{N}\bigr\}
\]
of (\ref{eq:ES-rel-str}). Recall again that $K_0[1]=K_0$ from the assumption $p \nmid h_{K_0}$. By Theorem\,\ref{thm:rank-1-general}, we conclude that the Selmer group ${\rm Sel}_{\relstr,\ord}(K_0,V_{f,\chi})$ is one-dimensional, 
spanned by 
\[
z_{f,\chi}={\rm Norm}^{K_0[1]}_{K_0}(z_{f,\chi,1})=\kappa(\bff,\bfg,\bfh)_{\mathcal{Q}_1}.
\]

If $k\ge k_1+k_2$, we observe that the composition of maps in \ref{eq:map-ERL} corresponds to the composition of
\begin{align*}
    {\rm Sel}_{\relstr,\ord}(K_0,T_f^{\vee}(1-r)\otimes\tilde{\ch}_1^{-1}\tilde{\Psi}_{T_1}\tilde{\ch}_2^{-1}\tilde{\Psi}_{T_2}) \xrightarrow{{\rm res}_{\mathcal{P}_1}} &\rH^1(K_{0,\mathcal{P}_1},T_f^{\vee}(1-r)\otimes\tilde{\ch}_1^{-1}\tilde{\Psi}_{T_1}\tilde{\ch}_2^{-1}\tilde{\Psi}_{T_2})\\
    \longrightarrow &\rH^1(K_{0,\mathcal{P}_1},T_f^{\vee,-}(1-r)\otimes\tilde{\ch}_1^{-1}\tilde{\Psi}_{T_1}\tilde{\ch}_2^{-1}\tilde{\Psi}_{T_2})
\end{align*}
by equation (\ref{eq:shapiro}). Hence ${\rm res}_{\mathcal{P}_1}(z_{f,\chi})\neq 0$ by the reciprocity law Theorem \ref{thm:ERL}. The vanishing of ${\rm Sel}_{\ord,\ord}(K_0,V_{f,\chi})$ then follows by a standard argument using Poitou--Tate duality (see \cite[\S{5.1.1}]{Do-PhD}). This yields the result by using the Lemma~\ref{lem:BK-Gr} for $k\ge k_1+k_2$ to identify the latter group with ${\rm Sel}_{\rm BK}(K_0,V_{f,\chi})$.

If $k_2-k_1\ge k$, similarly by using ${\rm res}_{\mathcal{P}_2}(z_{f,\chi})\neq 0$ we obtain the vanishing of ${\rm Sel}_{\relstr,\relstr}(K_0,V_{f,\chi})$, which is again the Bloch-Kato Selmer group ${\rm Sel}_{\rm BK}(K_0,V_{f,\chi})$ by Lemma~\ref{lem:BK-Gr} for $k_2-k_1\ge k$.
\end{proof}
\begin{rem}\label{rootnumber}Let $\epsilon(f,\chi)$ to be the sign of the functional equation for $V_{f,\chi}$. Then $\epsilon(f,\chi)=\prod \epsilon(\pi_{K_{0,v}}\otimes \chi_v,1/2)$ over places $v$ of $K_0$ as a product of local root numbers. If $v|\fkn^+$ then $\epsilon(\pi_{K_{0,v}}\otimes \chi_v,1/2)=+1$ and if $v|\fkn^-$ then $\epsilon(\pi_{K_{0,v}}\otimes \chi_v,1/2)=-1$. Therefore the contribution from the local places is $+1$ due to assumption $(2)$. At the infinity places,
\begin{align*}
    \epsilon_{\infty}(\pi_{K_0}\otimes\chi,\frac{1}{2})&= i^{|k-1+(k_1+k_2-2)|+|k-1-(k_1+k_2-2)|+|k-1+(k_2-k_1)|+|k-1-(k_2-k_1)|} \\&=\begin{cases}
        +1   &\text{ if }\quad k>(k_1+k_2-2)\\
        -1 &\text{ if }\quad k_2-k_1<k\le k_1+k_2-2\\
        +1 &\text{ if } \quad k\le k_2-k_1.
    \end{cases}
\end{align*}
Hence conditions $(1)$ and $(2)$ of Theorem \ref{thm:BK-def} then imply that $\epsilon(f,\chi)=1$.
\end{rem}

\subsection{On the Iwasawa main conjecture}
\label{subsec:IMC-def}

Our second application is an evidence towards the anticyclotomic Iwasawa main conjecture for modular forms. Recall that we have an eigenform $f$ of weight $k=2r\geq 2$ with trivial nebentypus and an anticyclotomic character $\chi$ satisfying Assumption \ref{eq:chi-decomp}. Let
\[
A_{f,\chi}={\rm Hom}_{\Z_p}(T_f^\vee(1-r)\otimes\chi^{-1},\mu_{p^\infty}).
\]

\begin{thm}\label{thm:IMC-def}
Under the same assumption as in Theorem~\ref{thm:BK-def}, we assume further that:
\begin{enumerate}
    \item $\bar{\rho}_f$ is $p$-distinguished,
    \item  $f$ has big image,
    \item $p>k-2.$
\end{enumerate}
If $k\ge k_1+k_2$ then ${\rm Sel}_{\ord,\ord}(K_{0,\infty}^-,A_{f,\chi})$ is cotorsion over $\Lambda_{K_0}^-$. Furthermore, inside $\Lambda_{K_0}^-\otimes_{\Z_p}\Q_p$, we have the following inclusion
\[
{\rm char}_{\Lambda_{K_0}^-}\bigl({\rm Sel}_{\ord,\ord}(K_{0,\infty}^-,A_{f,\chi})^\vee\bigr)\supset\bigl(\mathscr{L}_p^{\unb,\eta_\bff}(f,\bfg_1,\bfg_2)^2\bigr).
\]

\end{thm}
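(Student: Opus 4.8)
The plan is to combine the Euler system divisibility of Theorem~\ref{thm:IMC-general}, the explicit reciprocity law of Theorem~\ref{thm:ERL}, and the equivalence between the balanced and $\bff$-unbalanced formulations of the Greenberg--Iwasawa main conjecture in Proposition~\ref{prop:equiv}, transported to $K_0$ by specialising the three-variable picture of \S\ref{subsec:factor-L-def+Sel} along the arithmetic point $Q_1$ attached to $f$. First I would take the triple $(\bff,\bfg_1,\bfg_2)$ with $\bff$ the Hida family through $f$ and $\bfg_i=\boldsymbol{\theta}_{\ch_i}(S_i)$ the CM Hida families, specialise $\Vdag$ at $Q_1$ to $\Vsdag$, and use (\ref{eq:dec-V-first})--(\ref{eq:shapiro}) to identify, upon setting $T_i=\mathbf{v}^{-1}(1+S_i)-1$, the ring $\cO\dBr{S_1,S_2}$ with a finite extension of $\Lambda_{K_0}^-$ and $H^1(\Q,\Vsdag)$ with $H^1(K_0,T_f^\vee(1-r)\otimes\tilde{\ch}_1^{-1}\tilde{\Psi}_{T_1}\tilde{\ch}_2^{-1}\tilde{\Psi}_{T_2})$, the specialisation of the displayed twist at the weight-$(k_1,k_2)$ points being exactly $\chi^{-1}=\tilde{\psi}_1^{-1}\tilde{\psi}_2^{-1}\mathbf{N}^{1-(k_1+k_2)/2}$. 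Under this dictionary, Proposition~\ref{prop:factor-S} and Corollary~\ref{cor:factor-S}, passed to the limit over the anticyclotomic $\Z_p^2$-tower, identify ${\rm Sel}^{\rm bal}(\Q,\Vsdag)$ with ${\rm Sel}_{\relstr,\ord}(K_{0,\infty}^-,T_{f,\chi})$, the dual Selmer group $X^{\rm bal}(\Q,\mathbf{A}_{Q_1}^\dagger)$ with $X_{\strrel,\ord}(K_{0,\infty}^-,A_{f,\chi})$, and ${\rm Sel}^{\bff}(\Q,\mathbf{A}_{Q_1}^\dagger)$ with ${\rm Sel}_{\ord,\ord}(K_{0,\infty}^-,A_{f,\chi})$; the hypotheses $p\nmid 6h_{K_0}$, $p>k-2$ and $\bar{\rho}_f$ absolutely irreducible and $p$-distinguished ensure that $\eta_\bff$ generates the congruence ideal $C(\bff)$ and that the regulator ${\rm Log}^{\eta_\bff}$ of (\ref{eq:Log}) exists, is injective, and has pseudo-null cokernel.

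Next I would invoke the reciprocity law. Specialising Theorem~\ref{thm:ERL} at $Q_1$ and using the compatibility of ${\rm Log}^{\eta_\bff}$ with this specialisation yields
\[
{\rm Log}^{\eta_\bff}\bigl({\rm res}_p(\kappa(\bff,\bfg_1,\bfg_2)_{Q_1})_{\bff}\bigr)=\mathscr{L}_p^{\unb,\eta_\bff}(f,\bfg_1,\bfg_2),
\]
and, exactly as in the proof of Theorem~\ref{thm:BK-def}, $\kappa(\bff,\bfg_1,\bfg_2)_{Q_1}$ is identified via (\ref{eq:shapiro}) and corestriction from $K_0[1]$ with the base Iwasawa class $\mathbf{z}_{f,\chi}$ of Theorem~\ref{maintheorem2}. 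Since ${\rm Log}^{\eta_\bff}$ is injective with pseudo-null cokernel and $\Lambda_{K_0}^-$ is a domain, $\mathbf{z}_{f,\chi}$ is non-$\Lambda_{K_0}^-$-torsion precisely when $\mathscr{L}_p^{\unb,\eta_\bff}(f,\bfg_1,\bfg_2)\neq 0$; the asserted divisibility being trivial otherwise (its right-hand side then being the zero ideal), I may assume $\mathscr{L}_p^{\unb,\eta_\bff}(f,\bfg_1,\bfg_2)\neq 0$, so that $\mathbf{z}_{f,\chi}$ is non-torsion.

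Then Theorem~\ref{thm:IMC-general} applies, using the big-image hypothesis on $f$: the modules ${\rm Sel}_{\relstr,\ord}(K_{0,\infty}^-,T_{f,\chi})$ and $X_{\strrel,\ord}(K_{0,\infty}^-,A_{f,\chi})$ both have $\Lambda_{K_0}^-$-rank one, and
\[
{\rm char}_{\Lambda_{K_0}^-}\bigl(X_{\strrel,\ord}(K_{0,\infty}^-,A_{f,\chi})_{\rm tors}\bigr)\supset{\rm char}_{\Lambda_{K_0}^-}\bigl({\rm Sel}_{\relstr,\ord}(K_{0,\infty}^-,T_{f,\chi})/\Lambda_{K_0}^-\cdot\mathbf{z}_{f,\chi}\bigr)^2.
\]
Under the dictionary of the first paragraph this is assertion~(II) of Proposition~\ref{prop:equiv} for $\Vsdag$ over $\Lambda_{K_0}^-$, with the equalities there weakened to inclusions. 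Running the proof of Proposition~\ref{prop:equiv} with inclusions in place of equalities — the Poitou--Tate comparison of the balanced and $\bff$-unbalanced Selmer groups of $\mathbf{A}_{Q_1}^\dagger$, whose $\Lambda_{K_0}^-$-ranks differ by exactly one because $\mathscr{L}_p^{\unb,\eta_\bff}(f,\bfg_1,\bfg_2)\neq 0$, together with the reciprocity identity above — forces ${\rm Sel}^{\bff}(\Q,\mathbf{A}_{Q_1}^\dagger)$ to be $\Lambda_{K_0}^-$-cotorsion and gives ${\rm char}_{\Lambda_{K_0}^-}\bigl({\rm Sel}^{\bff}(\Q,\mathbf{A}_{Q_1}^\dagger)^\vee\bigr)\supset\bigl(\mathscr{L}_p^{\unb,\eta_\bff}(f,\bfg_1,\bfg_2)^2\bigr)$. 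Translating back via Corollary~\ref{cor:factor-S} identifies ${\rm Sel}^{\bff}(\Q,\mathbf{A}_{Q_1}^\dagger)$ with ${\rm Sel}_{\ord,\ord}(K_{0,\infty}^-,A_{f,\chi})$, so the latter is cotorsion and satisfies the stated divisibility; by Lemma~\ref{lem:BK-Gr} this is exactly the relevant Selmer group in the regime $k\ge k_1+k_2$.

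The hard part will be two bookkeeping issues. First, the compatibility of ${\rm Log}^{\eta_\bff}$ with the specialisation at $Q_1$ and, more delicately, the clean identification of $\kappa(\bff,\bfg_1,\bfg_2)_{Q_1}$ with the biquadratic diagonal-cycle class $\mathbf{z}_{f,\chi}$: this is what bridges the big diagonal class of \cite{BSV}, on which Theorem~\ref{thm:ERL} is phrased, with the Euler system constructed in Sections~\ref{sec:main-thms}--\ref{wildnormsetup}, and it has to be carried out compatibly with the Shapiro isomorphisms and the Artin-reciprocity identifications of the anticyclotomic tower in diagram~(\ref{eq:G_K0^-identification}). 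Second, one must verify that the equivalence of Proposition~\ref{prop:equiv} persists in its one-sided (divisibility) form over the two-variable ring $\Lambda_{K_0}^-$ rather than the three-variable $\mathcal{R}$; this is essentially a count of local Tate--Euler characteristics and Tamagawa-type factors at the primes above $p$ under the correspondence of Proposition~\ref{prop:factor-S}, and is where the conditions on $p$ (notably $p>k-2$ and $p\nmid 6h_{K_0}$) and the $p$-distinguishedness of $\bar{\rho}_f$ enter.
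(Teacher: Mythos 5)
Your overall strategy lines up with the paper's: specialise the three-variable triple-product picture at the arithmetic point $Q_1$ of $\bff$ corresponding to $f$, use the Selmer-group dictionary of Proposition~\ref{prop:factor-S} and Corollary~\ref{cor:factor-S} to move from $\Vsdag$ to the biquadratic tower, feed the non-torsion of the base class into Theorem~\ref{thm:IMC-general}, and transport the conclusion to the $\bff$-unbalanced side via (a one-sided version of) Proposition~\ref{prop:equiv}. You also correctly flag that Proposition~\ref{prop:equiv} must be applied in its divisibility form over the two-variable specialisation $\Lambda_{K_0}^-$ rather than over $\mathcal{R}$, which is a genuine bookkeeping point.

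However, there is a real gap in how you handle the case $\mathscr{L}_p^{\unb,\eta_\bff}(f,\bfg_1,\bfg_2)=0$. You dismiss this case on the grounds that the divisibility is then ``trivially'' satisfied, but the theorem asserts two things: the \emph{cotorsionness} of ${\rm Sel}_{\ord,\ord}(K_{0,\infty}^-,A_{f,\chi})$ over $\Lambda_{K_0}^-$, and the inclusion of characteristic ideals. If $\mathscr{L}_p=0$ you cannot conclude that $\mathbf{z}_{f,\chi}$ is non-torsion, so Theorem~\ref{thm:IMC-general} does not apply, and the cotorsion statement remains completely unproved; a trivial inclusion of the form ${\rm char}\supset(0)$ does not rescue the cotorsionness claim. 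The paper's proof avoids this by actually establishing $\mathscr{L}_p^{\unb,\eta_\bff}(f,\bfg_1,\bfg_2)\neq 0$ unconditionally via the non-vanishing result of Hung (\cite[Thm.\,C]{hung}), and this is exactly where the hypothesis $k\ge k_1+k_2$ is used: it guarantees that the relevant specialisations lie in the critical range ($-k_\sigma/2<m_\sigma<k_\sigma/2$) required by that theorem. Without this substantive analytic input your argument does not prove the cotorsion assertion. A secondary, more cosmetic, issue: your closing appeal to Lemma~\ref{lem:BK-Gr} is misplaced here — that lemma concerns the identification of Bloch--Kato local conditions at the level of $K_0$, not the $\ord,\ord$ Greenberg Selmer group over the anticyclotomic $\Z_p^2$-tower, and the paper's proof of this theorem does not invoke it.
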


\begin{proof}
Recall from Corollary \ref{cor:factor-S} that we have
\begin{equation}\label{eq:factor-S-sp}
{\rm Sel}^\bff(\Q,\mathbb{A}^\dagger)\simeq{\rm Sel}_{\ord,\ord}(K_{0,\infty}^-,A_f(r)\otimes\chi),
\end{equation}
where $\mathbb{A}^\dagger={\rm Hom}_{\Z_p}(\mathbb{V}^\dagger,\mu_{p^\infty})$. 

Note that from (\ref{eq:triple-2var}), 
$\mathscr{L}_p^{\unb,\eta_\bff}(f,\bfg_1,\bfg_2)$ is an element of $\cO\dBr{S_1,S_2}$. We then identify $\Lambda_{K_0}^-\simeq\cO\dBr{S_1,S_2}$ via the diagram (\ref{eq:G_K0^-identification}). The $p$-adic $L$-function $\mathscr{L}_p^{\unb,\eta_\bff}(f,\bfg_1,\bfg_2)$ is nonzero by \cite[Thm.\,C]{hung}. Note that our assumption that $k\ge k_1+k_2$ ensures that we are in the critical specializations i.e. $-k_{\sigma}/2< m_\sigma < k_{\sigma}/2$ for all $\sigma\in \Sigma$, following the notation of op. cit.. Hence  Theorem~\ref{thm:ERL} and the proof of Theorem \ref{thm:BK-def} implies that the class
\begin{equation}
\kappa(f,\bfg,\bfh)\in{\rm Sel}_{\relstr,\ord}(K_0,T_f^{\vee}(1-r)\otimes\tilde{\ch}_1^{-1}\tilde{\Psi}_{T_1}\tilde{\ch}_2^{-1}\tilde{\Psi}_{T_2})\nonumber
\end{equation}
is non-torsion. By construction, we can treat $\kappa(f,\bfg,\bfh)$ as the base class of the $\Lambda_{K_0}^-$-adic anticyclotomic Euler system 
\[
\bigl\{\mathbf{z}_{f,\chi,\mu_3}\in{\rm Sel}_{\relstr,\ord}(K_0[\mu_3p^\infty],T_{f,\chi})\,\colon\;\mu_3\in\mathcal{N}\bigr\}
\]
in (\ref{eq:ES-Lambda}). Via the isomorphism \ref{eq:shapiro}, the result follows immediately from
Theorem\,\ref{thm:IMC-general} applied to
\begin{equation}\label{eq:Lambda-class}
\mathbf{z}_{f,\chi}:={\rm Norm}^{K_0[1]}_{K_0}(\mathbf{z}_{f,\chi,(1)})=\kappa(f,\bfg,\bfh),
\end{equation}
the equivalence between two different formulation of the Iwasawa main conjecture in Proposition~\ref{prop:equiv}, and the Selmer group isomorphism  (\ref{eq:factor-S-sp}). 
\end{proof}

\begin{rmk}
Within Theorem \ref{thm:IMC-def}, the RHS can be compared to the $p$-adic $L$-function of Wan \cite[Thm.\,86]{wan_hilbertIMC} and Hung \cite{hung} (under assumptions \cite[Thm.\,11.1,11.2]{Fujiwara} and \cite[Thm.\,103]{wan_hilbertIMC}). The author then expects that the full Iwasawa Main Conjecture, which means an equality of Theorem \ref{thm:IMC-def}, will follow by combining Theorem \ref{thm:IMC-def} with the opposite inclusion of Wan \cite{wan_hilbertIMC} and the vanishing of the
$\mu$-invariant of the $p$-adic $L$-function \cite{hung} (those are generalizations of Skinner-Urban \cite{SU} and Vatsal \cite{vatsal-special}).
\end{rmk}

\begin{rmk}One expects a similar result that if $k_2-k_1\ge k$ then ${\rm Sel}_{\strrel,\strrel}(K_\infty^-,A_{f,\chi})$ is cotorsion over $\Lambda_{K_0}^-$ together with the following inclusion inside $\Lambda_{K_0}^-\otimes_{\Z_p}\Q_p$,
\[
{\rm char}_{\Lambda_{K_0}^-}\bigl({\rm Sel}_{\strrel,\strrel}(K_{0,\infty}^-,A_{f,\chi})^\vee\bigr)\supset\bigl(\mathscr{L}_p^{\bfh,\eta_\bfh}(f,\bfg_1,\bfg_2)^2\bigr).
\]
The only missing ingredient is the non-vanishing of the $p$-adic $L$-function in this region.
    
\end{rmk}    
\subsection{On the Bloch--Kato conjecture in rank $1$} 

Our last application is extracted from the proof of Theorem~\ref{thm:IMC-def}. It provides a result towards the Bloch--Kato conjecture in rank $1$.

\begin{thm}\label{thm:BK-def-1}
Under the same assumption as in Theorem~\ref{thm:IMC-def}, if $k_1+k_2-2\geq k\ge k_2-k_1+2$ (which induces $L(f/K,\chi,r)=0$), then 
\[
{\rm dim}_{L_\mathfrak{P}}\,{\rm Sel}_{\rm BK}(K_0,V_{f,\chi})\geq 1.
\]

\end{thm}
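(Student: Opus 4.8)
The plan is to derive the rank-one lower bound for $\mathrm{Sel}_{\rm BK}(K_0,V_{f,\chi})$ from the non-triviality of the anticyclotomic Euler system base class, exactly as in the proof of Theorem~\ref{thm:IMC-def}, but now exploiting that in the range $k_1+k_2-2\geq k\geq k_2-k_1+2$ we sit in the balanced region for the triple $(\bff,\bfg_1,\bfg_2)$. First I would recall from Remark~\ref{rootnumber} that this weight range forces $\epsilon(f,\chi)=-1$, so the sign of the functional equation of $L(\mathbb V^\dagger_{\mathcal Q_1},s)$ is $-1$ and hence $L(f/K_0,\chi,r)=L(\mathbb V^\dagger_{\mathcal Q_1},0)=0$; this is the parenthetical claim in the statement. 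The central input is that the specialisation $\mathcal Q_1$ (corresponding to $\theta_{\psi_1},\theta_{\psi_2}$) now lies in $\mathfrak X_{\mathcal R}^{\rm bal}$ rather than $\mathfrak X_{\mathcal R}^{\bff}$, so Lemma~\ref{lem:BK-Gr} identifies $\mathrm{Sel}_{\rm BK}(K_0,V_{f,\chi})$ with $\mathrm{Sel}_{\relstr,\ord}(K_0,V_{f,\chi})$ (the case $2b+2\le k< 2a+2$, i.e. here $k_2-k_1+2\le k< k_1+k_2$, together with the boundary case handled by purity), which is precisely the Selmer group in which the base class $z_{f,\chi}$ lives by Proposition~\ref{propselmer2}.

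The key steps, in order, are: (i) form the CM Hida families $\bfg=\boldsymbol\theta_{\ch_1}(S_1)$, $\bfh=\boldsymbol\theta_{\ch_2}(S_2)$ through $\theta_{\psi_1},\theta_{\psi_2}$, so that the triple $(\bff,\bfg,\bfh)$ satisfies \eqref{eq:a} and \eqref{eq:+1}, and via \eqref{eq:dec-V-first} the specialisation $\mathbb V^\dagger_{\mathcal Q_1}$ is $T_f^\vee(1-r)\otimes\chi^{-1}$-adic with $\mathrm{Sel}^{\rm bal}(\Q,\mathbb V^\dagger_{\mathcal Q_1})\simeq\mathrm{Sel}_{\relstr,\ord}(K_0,V_{f,\chi})$ by Proposition~\ref{prop:factor-S}(1); (ii) invoke the non-vanishing of the two-variable $p$-adic $L$-function $\mathscr L_p^{\unb,\eta_\bff}(f,\bfg_1,\bfg_2)\in\cO\dBr{S_1,S_2}$ from \cite{hung}, noting the assumption $k_1+k_2-2\geq k$ guarantees we are in the range where \cite[Thm.\,C]{hung} applies (the relevant critical/non-critical bookkeeping as in the proof of Theorem~\ref{thm:IMC-def}); (iii) apply the explicit reciprocity law Theorem~\ref{thm:ERL}: since $\mathrm{Log}^{\eta_\bff}$ is injective and $\mathscr L_p^{\bff,\eta_\bff}(\bff,\bfg,\bfh)\ne 0$, the class $\kappa(\bff,\bfg,\bfh)$ is not $\mathcal R$-torsion, hence its specialisation, and in particular the base class $\mathbf z_{f,\chi}=\mathrm{cor}_{K_0[1]/K_0}(\mathbf z_{f,\chi,(1)})$, is non-torsion over $\Lambda_{K_0}^-$; (iv) conclude via Theorem~\ref{thm:IMC-general}(1) (whose hypotheses — $f$ not CM, big image at $\mathfrak P$, $p\nmid 6h_{K_0}$ — hold here) that $\mathrm{Sel}_{\relstr,\ord}(K_{0,\infty}^-,T_{f,\chi})$ has $\Lambda_{K_0}^-$-rank one, and then specialise the Euler system at the trivial character of $\Ga^-$ — equivalently apply Theorem~\ref{thm:rank-1-general} to the nonzero class $z_{f,\chi}$ — to get $\dim_{L_\mathfrak P}\mathrm{Sel}_{\relstr,\ord}(K_0,V_{f,\chi})\geq 1$; (v) translate this back through Lemma~\ref{lem:BK-Gr} to $\dim_{L_\mathfrak P}\mathrm{Sel}_{\rm BK}(K_0,V_{f,\chi})\geq 1$.

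The main obstacle, and the step requiring the most care, is step (iii)–(iv): one must check that the non-vanishing of the \emph{balanced}-region $p$-adic $L$-function (equivalently the non-torsionness of $\kappa(\bff,\bfg,\bfh)$ coming from \cite{hung} and the reciprocity law) indeed survives specialisation to the single arithmetic point $\mathcal Q_1$ to produce a nonzero class $z_{f,\chi}$ in $\mathrm{Sel}_{\relstr,\ord}(K_0,V_{f,\chi})$ — in general passing from "$\mathbf z_{f,\chi}$ non-torsion over $\Lambda_{K_0}^-$" to "$z_{f,\chi}\ne 0$" is not automatic. What rescues this is precisely the equivalence encoded in Theorem~\ref{thm:IMC-general}: the non-torsionness of $\mathbf z_{f,\chi}$ yields the rank-one statement over the Iwasawa algebra, and since the corestriction-compatible family has base class whose image in $\mathrm{Sel}_{\relstr,\ord}(K_0,V_{f,\chi})$ spans (after tensoring with $L_\mathfrak P$) the specialised Selmer group, one gets the claimed inequality $\geq 1$ without needing the sharper "$z_{f,\chi}\ne 0$" conclusion. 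A secondary point to be careful about is the boundary value $k=k_1+k_2-2$ versus $k=k_1+k_2$ in Lemma~\ref{lem:BK-Gr}: at $k=k_1+k_2$ the lemma gives $\mathrm{Sel}_{\ord,\ord}$, so one genuinely needs $k\le k_1+k_2-2$, which is exactly the hypothesis $k_1+k_2-2\ge k$; the comparison with the strict half of the local condition and the contribution of $V_{f,\chi}^+$ at $\cP_1$ should be spelled out using the Hodge–Tate weight table in the proof of Lemma~\ref{lem:BK-Gr}.
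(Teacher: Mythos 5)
Your overall architecture matches the paper's — construct the CM Hida families, establish non-vanishing of the two-variable $p$-adic $L$-function, invoke the reciprocity law to get non-torsionness of $\mathbf z_{f,\chi}$, apply Theorem~\ref{thm:IMC-general}(1) to obtain $\Lambda_{K_0}^-$-rank one for $\mathrm{Sel}_{\relstr,\ord}(K_{0,\infty}^-,T_{f,\chi})$, then identify with $\mathrm{Sel}_{\rm BK}$ via Lemma~\ref{lem:BK-Gr}. But your final descent from the Iwasawa-theoretic rank-one statement to $\dim_{L_\mathfrak P}\mathrm{Sel}_{\relstr,\ord}(K_0,V_{f,\chi})\geq 1$ has a genuine gap that your proposal does not close. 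Your first attempt invokes Theorem~\ref{thm:rank-1-general}, which is conditional on $z_{f,\chi}\neq 0$; that is exactly the piece of information you do not have. Your second attempt (the ``rescue'' paragraph) asserts that the image of the base class spans the specialised Selmer group after tensoring with $L_\mathfrak P$; but if $z_{f,\chi}=0$ its span is trivial, so this is circular and does not yield $\geq 1$. You correctly diagnosed that ``passing from non-torsion $\mathbf z_{f,\chi}$ to $z_{f,\chi}\neq 0$ is not automatic,'' but the argument you offer in its place does not in fact avoid that issue.

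What the paper actually does at this step is invoke a \emph{control theorem}: citing \cite{JSW} for the relaxed-strict places and \cite{greenberg-cetraro} for the ordinary-ordinary places, the natural map
\[
\mathrm{Sel}_{\relstr,\ord}(K_{0,\infty}^-,T_{f,\chi})/(\gamma_{1,-}-1,\gamma_{2,-}-1)\longrightarrow\mathrm{Sel}_{\relstr,\ord}(K_0,T_{f,\chi})
\]
is injective with finite cokernel. Combined with the $\Lambda_{K_0}^-$-rank-one statement, this forces the base Selmer group to have positive $\cO$-rank, without ever needing to decide whether the particular class $z_{f,\chi}$ is nonzero. Your proposal never invokes a control theorem, and this is precisely the missing ingredient.

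A secondary, less serious issue: your step (ii) claims that $k_1+k_2-2\geq k$ puts you ``in the range where \cite{hung} applies.'' In fact, the critical interpolation range used in the proof of Theorem~\ref{thm:IMC-def} is the $\bff$-unbalanced one $k\geq k_1+k_2$ — the opposite of the balanced range you are in. What is actually needed is the non-vanishing of $\mathscr L_p^{\unb,\eta_\bff}(f,\bfg_1,\bfg_2)$ as an element of $\cO\dBr{S_1,S_2}$, which is detected at unbalanced specialisations and has nothing directly to do with the balanced point $\mathcal Q_1$; the paper's appeal to ``the proof of Theorem~\ref{thm:IMC-def}'' should be read in that light rather than as a statement about criticality at $\mathcal Q_1$.
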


\begin{proof}
 The class $\mathbf{z}_{f,\chi}\in{\rm Sel}_{\relstr,\ord}(K_{0,\infty}^-,T_{f,\chi})$ is non-torsion via the proof of Theorem~\ref{thm:IMC-def}. Furthermore, $\mathbf{z}_{f,\chi}$ is the base of a $\Lambda_{K_0}^-$-adic anticyclotomic Euler system as in (\ref{eq:Lambda-class}) for the relaxed-strict-ordinary-ordinary Selmer group. Theorem~\ref{thm:IMC-general} then implies that ${\rm Sel}_{\relstr,\ord}(K_{0,\infty}^-,T_{f,\chi})$ has $\Lambda_{K_0}^-$-rank $1$. The natural map (compare with the projection (\ref{eq:Sel_projection_from_K_infty}))
\begin{equation}\label{eq:proj}
{\rm Sel}_{\relstr,\ord}(K_{0,\infty}^-,T_{f,\chi})/(\gamma_{1,-}-1,\gamma_{2,-}-1)\rightarrow{\rm Sel}_{\relstr,\ord}(K_0,T_{f,\chi})
\end{equation}
is injective (see also \cite[Prop.\,5.3.14]{MR-KS} and \cite[p.\,72]{greenberg-cetraro}). 

Hence, the Selmer group ${\rm Sel}_{\relstr,\ord}(K_0,T_{f,\chi})$ has a positive $\cO$-rank. The theorem then follows by Lemma \ref{lem:BK-Gr}, which computes the local conditions of the Bloch-Kato Selmer group explictly. 
\end{proof}

\begin{rmk}
Note that by letting $z_{f,\chi}\in{\rm Sel}_{\relstr,\ord}(K_0,T_{f,\chi})$ be the image of $\mathbf{z}_{f,\chi}$ under the map (\ref{eq:proj}), such a class $z_{f,\chi}\in{\rm Sel}_{\rm BK}(K_0,V_{f,\chi})$ satisfies:
\[
z_{f,\chi}\neq 0\quad\Longrightarrow\quad{\rm dim}_{L_\mathfrak{P}}\,{\rm Sel}_{\rm BK}(K_0,V_{f,\chi})=1.
\]
by Theorem~\ref{thm:rank-1-general}.
\end{rmk}

\newpage
\bibliographystyle{amsalpha}
\bibliography{Schoen-cm}

\end{document}